\documentclass[11pt,reqno]{amsart}
\usepackage[letterpaper,textwidth=6.05in,textheight=8.8in,centering]{geometry}
\usepackage{amsmath,amssymb,amsthm}
\usepackage{microtype}
\usepackage{needspace}
\usepackage{booktabs}
\usepackage[hidelinks]{hyperref}

\newtheorem{theorem}{Theorem}[section]
\newtheorem{proposition}[theorem]{Proposition}
\newtheorem{lemma}[theorem]{Lemma}
\newtheorem{corollary}[theorem]{Corollary}
\theoremstyle{definition}

\theoremstyle{remark}

\newcommand{\V}{\mathsf{V}}
\DeclareMathOperator{\Id}{Id}
\DeclareMathOperator{\occ}{occ}

\newcommand{\Pf}{P_f}
\newcommand{\HH}{\mathbb{H}}
\newcommand{\FF}{\mathbb{F}}
\newcommand{\KK}{\mathbb{K}}
\newcommand{\GG}{\mathbb{G}}
\newcommand{\bu}{\mathbf{u}}
\newcommand{\bv}{\mathbf{v}}
\newcommand{\bw}{\mathbf{w}}
\newcommand{\bp}{\mathbf{p}}
\newcommand{\bq}{\mathbf{q}}
\newcommand{\bd}{\mathbf{d}}
\newcommand{\bt}{\mathbf{t}}
\newcommand{\br}{\mathbf{r}}
\newcommand{\Spow}[1]{S_c^*(a^{#1})}
\newcommand{\Slin}[1]{S_c^*(a_1\cdots a_{#1})}
\newcommand{\Spowinf}{S_c^*(\{a\}^{+})}
\newcommand{\Slininf}{S_c^*(\{a_1\cdots a_m:m\geq1\})}

\hypersetup{
 pdftitle={Finite bases and joins for semirings defined by the divisibility order},
 pdfauthor={Xiaolei Shao and Zidong Gao},
 pdfsubject={Identities and varieties of additively idempotent semirings},
 pdfkeywords={semiring, divisibility order, finite basis, join, hypergraph}
}
\title[Finite bases and joins of semirings]
 {Finite bases and joins for semirings defined by the divisibility order}
\author{Xiaolei Shao}
\author{Zidong Gao}
\date{}
\subjclass[2020]{16Y60, 08B05, 08B15, 05C65}
\keywords{additively idempotent semiring, divisibility order, finite basis problem,
join of varieties, hypergraph}

\begin{document}
\begin{abstract}
We study additively idempotent semirings obtained from commutative words
by equipping their subwords with the divisibility order. Every finite
semiring associated with a power of one letter is finitely based, whereas
one associated with a linear word is finitely based exactly when the word
has length at most two. A hypergraph preservation lemma yields the
nonfinite basis result and extends it to intervals of varieties. We
establish a sharp containment criterion between the power and linear
families, and determine the finite basis property of every join of two
varieties generated by one member of each family. The unrestricted power
family generates the nonfinitely based max-plus variety. In contrast,
the unrestricted linear family has a finite basis, as does its join with
each finite power member. We also realize a previously known six-element
limit semiring as a quotient of a subsemiring of the eight-element
linear-word semiring. This gives a proper nonfinitely based subvariety
and resolves the corresponding minimality question.
\end{abstract}
\maketitle

\section{Introduction}

An additively idempotent semiring (an ai-semiring) is an algebra $(S,+,\cdot)$ whose
additive reduct is a commutative idempotent semigroup, whose
multiplicative reduct is a semigroup, and whose operations satisfy both
distributive laws. We work in the signature $(+,\cdot)$. For an algebra
$S$, let $\V(S)$ denote the variety generated by $S$. A variety, or an
algebra generating it, is \emph{finitely based} if its identities follow
from a finite set of identities; otherwise it is \emph{nonfinitely
based}.

Gao introduced the semirings $S_c^*(W)$ by using the divisibility order
on commutative word semigroups \cite[Section~4]{Gao}. Their nonzero
elements are the nonempty commutative subwords of words in $W$.
Multiplication is truncated word multiplication, and addition is the
supremum in the divisibility order, with an absorbing zero adjoined.
This construction connects word semirings with the max-plus semiring
and with the finitely based three-element semiring $S_{53}$.
In particular, $\Slin{k}\in\V(S_{53})$ for every $k\geq1$, and,
for $k\geq3$, the interval from $\V(\Slin{k})$ to the
$(k+1)$-nilpotent subvariety of $\V(S_{53})$ has the cardinality
of the continuum \cite[Lemma~4.4 and Theorem~4.5]{Gao}.

We determine the finite basis property for two basic families in this
construction. For every finite $n\geq1$, the semiring $\Spow{n}$
is finitely based. For every finite $k\geq1$, the semiring
$\Slin{k}$ is finitely based if and only if $k\leq2$. The proof
for powers uses a bounded selection of words from the upper term of
an inequality. The proof for linear words of length at least three uses
high-girth hypergraphs and a preservation property for equational
deductions.

The two families are related by the exact containment criterion
\begin{equation}\label{eq:introcontainment}
 \V(\Slin{k})\subseteq\V(\Spow{n})
 \quad\Longleftrightarrow\quad n\geq2k-1.
\end{equation}
An explicit homomorphic image of a subsemiring of a direct power
establishes the positive direction. A square-versus-product inequality
gives the converse. These same inequalities, combined with the
hypergraph preservation lemma, determine the finite basis property of
mixed joins:
\begin{equation}\label{eq:introjoin}
 \V(\Spow{n})\vee\V(\Slin{k})
 \text{ is finitely based}
 \quad\Longleftrightarrow\quad
 k\leq2\ \text{or}\ n\geq2k-1.
\end{equation}
We also obtain intervals consisting entirely of nonfinitely based
varieties, and strict intermediate nonfinitely based varieties between
successive finitely based members of the power chain.

For the infinite families, the relevant algebras are
$\Spowinf$ and $\Slininf$. We prove
\begin{align}
 \V(\Spowinf)
 &=\bigvee_{n\geq1}\V(\Spow{n})
   =\V((\mathbb N_0,\max,+)),\label{eq:intropowerjoin}\\
 \V(\Slininf)
 &=\bigvee_{k\geq1}\V(\Slin{k}).\label{eq:introlinearjoin}
\end{align}
The first variety is nonfinitely based by the max-plus theorem of
Aceto, \'Esik and Ing\'olfsd\'ottir \cite{AcetoEsikIngolf}.
The second has an explicit finite basis. For every finite $n$, its join
with $\V(\Spow{n})$ is also finitely based. Thus the finite
basis property changes in opposite directions in the two unbounded
families.

Shao, Ren and Gao proved that $TR_6$ generates a limit variety
\cite[Theorem~4.6]{ShaoRenGao}. We construct a quotient of a
seven-element subsemiring of $S_c^*(abc)$ which is isomorphic to
$TR_6$. Consequently,
\[
 \V(TR_6)\subsetneq\V(S_c^*(abc)).
\]
This gives a negative answer to the minimality question in
\cite[Problem~6.1]{Gao}.

Section~\ref{sec:preliminaries} fixes the notation for terms and
inequalities. Sections~\ref{sec:semiring}--\ref{sec:unrestricted}
describe the algebras and prove the finite basis results for finite
powers and the unrestricted linear family. The hypergraph arguments
occupy Sections~\ref{sec:hypergraphmethods} and
\ref{sec:mainproof}. Section~\ref{sec:finitejoins} proves
\eqref{eq:introcontainment} and \eqref{eq:introjoin}, including
the small boundary cases. Section~\ref{sec:infinitejoins} treats the
infinite joins. Section~\ref{sec:intervals} gives the interval
consequences and the quotient construction for $TR_6$.

\section{Terms, inequalities and equational deductions}
\label{sec:preliminaries}

We use the notation for words and terms from \cite[Section~2]{Gao}.
Let $X$ be a countably infinite set of variables. Write $X^+$ for the
free semigroup on $X$, $X_c^+$ for the free commutative semigroup on
$X$, and $X_c^*=X_c^+\cup\{1\}$ for the free commutative monoid.
Here $1$ is the empty word. Throughout the proof, words belong to
$X_c^+$; the symbol $1$ indicates an absent multiplicative context.
For a positive integer $n$, write $[n]=\{1,\ldots,n\}$.

For a word $\bp$, the notation $\occ(x,\bp)$ denotes the number of
occurrences of $x$ in $\bp$, and $c(\bp)$ denotes its content. Thus,
if
\[
 \bp=x_1^{m_1}\cdots x_s^{m_s},
 \qquad m_i\geq 1,
\]
where $x_1,\ldots,x_s$ are distinct, then
\[
 c(\bp)=\{x_1,\ldots,x_s\},
 \qquad
 \ell(\bp)=m_1+\cdots+m_s.
\]
Here $\ell(\bp)$ denotes the length of $\bp$. A word is
\emph{linear} if $\occ(x,\bp)=1$ for every $x\in c(\bp)$.
For the formal empty word we put $c(1)=\varnothing$ and $\ell(1)=0$.

By distributivity, every ai-semiring term can be written as a finite
nonempty sum of words. In the commutative case we write
\begin{equation}\label{eq:term}
 \bu=\bu_1+\cdots+\bu_m,
 \qquad \bu_i\in X_c^+.
\end{equation}
Repeated words may be deleted by additive idempotence. We shall always
assume that the words in a displayed sum such as \eqref{eq:term} are
distinct. As in \cite{Gao},
\[
 c(\bu)=\bigcup_{i=1}^m c(\bu_i).
\]
We write $|\bu|=m$ for the number of distinct words in $\bu$.

We identify a normalized term with the finite nonempty set of its
words. Under this identification, addition is union and multiplication
is setwise multiplication followed by deletion of repetitions. The free
commutative ai-semiring on $X$ is therefore $\Pf(X_c^+)$, where
$\Pf$ denotes the set of finite nonempty subsets. This model is used
for semilattice-ordered semigroups and commutative ai-semirings; see
\cite{KurilPolak,ShaoRenGao}. The notation $\bw\in\bu$ means that
$\bw$ is one of the words of $\bu$.

A substitution is an endomorphism
\[
 \varphi:\Pf(X_c^+)\longrightarrow\Pf(X_c^+).
\]
It is determined by assigning a finite nonempty sum of words
$\varphi(x)$ to each variable $x$. In a distributive expansion, a
summand of $\varphi(x)$ is chosen independently for each occurrence
of $x$. Every chosen word belongs to $X_c^+$.

We take the commutative ai-semiring laws as background identities.
Finite basability relative to these laws is equivalent to finite
basability in the signature $(+,\cdot)$. Indeed, the background laws
form a finite set, so adjoining them to a finite relative basis gives
a finite identity basis. Conversely, a finite identity basis is also
a finite relative basis.

For an ai-semiring $S$, its natural order is given by
\begin{equation}\label{eq:naturalorder}
 r\leq s\quad\Longleftrightarrow\quad r+s=s.
\end{equation}
Multiplication preserves this order. For example, if $r+s=s$, then
\[
 rt+st=(r+s)t=st,
\]
so $rt\leq st$. The analogous assertion for left multiplication follows
from the other distributive law.

For terms $\bu$ and $\bv$, we use the notation of \cite{Gao}:
\[
 \bu\preceq\bv\quad\text{denotes the identity}\quad
 \bu+\bv\approx\bv.
\]
We write $\bu\preceq_S\bv$ when this identity holds in $S$.
Thus $\bq\preceq_S\bu$ means
\[
 \alpha(\bq)\leq\alpha(\bu)
\]
for every assignment $\alpha$ into $S$.

\begin{lemma}\label{lem:normalization}
Let $\bu=\bu_1+\cdots+\bu_m$ and
$\bv=\bv_1+\cdots+\bv_n$. Relative to the commutative ai-semiring
laws, the identity $\bu\approx\bv$ is equivalent to the finite family
\begin{equation}\label{eq:normalization}
 \bu_i\preceq\bv\quad(1\leq i\leq m),
 \qquad
 \bv_j\preceq\bu\quad(1\leq j\leq n).
\end{equation}
Consequently, a finite identity basis can be replaced by a finite family
of inequalities $\bq\preceq\bu$ in which $\bq$ is a word.
\end{lemma}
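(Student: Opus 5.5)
We derive each direction of the equivalence from the commutative ai-semiring laws alone, and then reduce an arbitrary finite basis. The only tools needed are additive idempotence, commutativity and associativity of $+$, and the fact that the natural order \eqref{eq:naturalorder} is compatible with addition. Recall that $\bq\preceq\bu$ abbreviates the identity $\bq+\bu\approx\bu$.

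First assume $\bu\approx\bv$. For a fixed $i$, idempotence gives $\bu_i+\bu\approx\bu$, since $\bu_i$ is already a summand of $\bu$. Replacing $\bu$ by $\bv$ on both sides (allowed by the rules of equational logic) yields $\bu_i+\bv\approx\bv$, which is $\bu_i\preceq\bv$. The inequalities $\bv_j\preceq\bu$ follow symmetrically.

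Conversely, assume the family \eqref{eq:normalization}. Starting from $\bv$, we add $\bu_1,\ldots,\bu_m$ one at a time, each time rewriting $\bu_i+\bv$ as $\bv$. This shows $\bu+\bv\approx\bv$. Formally, we induct on $i$ using
\[
 \bu_1+\cdots+\bu_i+\bv\approx\bu_1+\cdots+\bu_{i-1}+(\bu_i+\bv)\approx\bu_1+\cdots+\bu_{i-1}+\bv,
\]
where associativity and commutativity let us regroup, and the hypothesis $\bu_i+\bv\approx\bv$ is applied inside the context $\bu_1+\cdots+\bu_{i-1}+(\,\cdot\,)$. Symmetrically, $\bv+\bu\approx\bu$. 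Commutativity of $+$ then gives
\[
 \bu\approx\bv+\bu\approx\bu+\bv\approx\bv.
\]
The only point needing care is making sure each substitution step is legitimate in equational logic. This is routine: replacement of a subterm by an equal term is one of the inference rules.

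For the consequence, let $\Sigma$ be a finite identity basis. Every term is equivalent, modulo the background laws, to a normalized sum of words as in \eqref{eq:term}. So we may write each identity of $\Sigma$ in the form $\bu\approx\bv$ with $\bu,\bv$ normalized. By the first part, each such identity is interderivable, relative to the commutative ai-semiring laws, with its finitely many inequalities \eqref{eq:normalization}. Each of these inequalities has a single word on the left. Collecting them over all of $\Sigma$ gives a finite family of inequalities $\bq\preceq\bu$ with $\bq$ a word, and this family is equivalent to $\Sigma$.
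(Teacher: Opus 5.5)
Your proposal is correct and follows essentially the same route as the paper: idempotence plus replacement of $\bu$ by $\bv$ for the forward direction, and absorbing each $\bu_i$ into $\bv$ (your induction spells out the paper's ``adding the identities'') followed by commutativity for the converse. Collecting the finitely many inequalities from each identity of the basis gives the consequence, exactly as in the paper.
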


\begin{proof}
Suppose first that $\bu\approx\bv$ holds. Each word $\bu_i$ is a
summand of $\bu$, so additive idempotence gives
$\bu_i+\bu\approx\bu$. Replacing $\bu$ by $\bv$ yields
$\bu_i+\bv\approx\bv$, which is $\bu_i\preceq\bv$.
The inequalities $\bv_j\preceq\bu$ follow in the same way.

Conversely, suppose that all the inequalities in
\eqref{eq:normalization} hold. Adding the identities
$\bu_i+\bv\approx\bv$ over $i$ and using additive idempotence gives
\[
 \bu+\bv\approx\bv.
\]
Adding the identities $\bv_j+\bu\approx\bu$ similarly gives
\[
 \bv+\bu\approx\bu.
\]
Since addition is commutative, these two identities imply
$\bu\approx\bv$. Each original identity produces only finitely many
inequalities, so a finite set of identities produces a finite family
of the required form.
\end{proof}

For elementary deductions, we write $\bp=1$ for an absent
multiplicative context and omit the summand $\bd$ when the additive
context is absent.

\begin{lemma}\label{lem:deduction}
Let $\Delta$ be a family of inequalities $\bq\preceq\bu$, each
interpreted as the identity $\bu\approx\bu+\bq$. An elementary
application of an identity in $\Delta$, after normalization by the
commutative ai-semiring laws, has the form
\begin{equation}\label{eq:step}
 \bp\varphi(\bu)+\bd
 \quad\longleftrightarrow\quad
 \bp\varphi(\bu)+\bp\varphi(\bq)+\bd,
\end{equation}
where $\varphi$ is a substitution, $\bp$ is a term or is $1$, and
$\bd$ is a term or is omitted. Every identity deducible from $\Delta$
relative to the commutative ai-semiring laws can be obtained through a
finite sequence of such elementary applications.
\end{lemma}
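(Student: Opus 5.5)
This is a completeness theorem for equational logic, specialized to the free commutative ai-semiring $\Pf(X_c^+)$. I would start from Birkhoff's completeness theorem relative to the commutative ai-semiring laws. An identity $\bs\approx\bt$ follows from $\Delta$ exactly when $\bt$ can be reached from $\bs$ by finitely many replacement steps. Each step replaces a subterm of the form $\varphi(\bu)$ by $\varphi(\bu+\bq)$, or conversely, inside some polynomial context. Since the background laws are assumed, it is enough to work in the free object $\Pf(X_c^+)$. There, identities of the background theory are equalities of normalized terms (finite sets of words), so a deduction becomes a chain of normalized terms. Consecutive terms differ by one application of an identity $\bu\approx\bu+\bq$, read in either direction, at one position of a term.

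The key step is to describe a unary polynomial context $C[z]$ after normalization. By induction on the construction of $C$, using distributivity, commutativity of multiplication and additive idempotence, every context with exactly one occurrence of the hole $z$ normalizes to $\bp z+\bd$. Here $\bp$ is a term or $1$, and $\bd$ is a term or omitted. The induction has four cases.
\begin{itemize}
\item The base case is $C=z$, with $\bp=1$ and $\bd$ omitted.
\item If $C=C'+\br$, the additive part becomes $\bd+\br$.
\item If $C=C'\cdot\br$, we get $\bp\br z+\bd\br$.
\item Left multiplication is the same by commutativity.
\end{itemize}
The only bookkeeping is the convention for an absent $\bp$ or $\bd$. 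A context with several occurrences of the hole is not needed, because replacement is done at one position at a time.

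Plugging in gives $C[\varphi(\bu)]=\bp\varphi(\bu)+\bd$. We also have $C[\varphi(\bu+\bq)]=\bp\varphi(\bu)+\bp\varphi(\bq)+\bd$, by distributivity of $\bp$ over the sum. This is exactly \eqref{eq:step}, and the symmetric direction is just \eqref{eq:step} read backwards. Chaining these steps proves the final assertion.

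I expect the main obstacle to be the care needed in justifying the reduction to single-position replacements modulo the background laws. One must note that intermediate rewriting by the commutative ai-semiring laws is invisible in $\Pf(X_c^+)$. So the standard deduction (reflexivity, symmetry, transitivity, substitution, compatibility) collapses to chains of the displayed steps. I would cite the usual completeness theorem, for example in the form that the fully invariant congruence generated by $\Delta$ on the free algebra is the transitive closure of one-step replacements, rather than reprove it.
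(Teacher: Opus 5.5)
Your proposal is correct and follows essentially the same route as the paper. Both arguments use the same induction on one-hole contexts to reach $\bp\,\square+\bd$, then insert $\varphi(\bu)$ and $\varphi(\bu+\bq)$ using that $\varphi$ is an endomorphism, and finally appeal to the standard completeness of equational deduction with replacements made one occurrence at a time, the background laws being invisible in $\Pf(X_c^+)$.
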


\begin{proof}
We first verify the assertion about contexts. A context has one
distinguished occurrence, represented by a hole $\square$, into which
a term is to be inserted. We claim that every such context, after
distributive expansion and commutative normalization, can be written
as
\begin{equation}\label{eq:context}
 \bp\,\square+\bd,
\end{equation}
with the stated conventions.

For the context consisting only of $\square$, take $\bp=1$ and omit
$\bd$. Suppose that \eqref{eq:context} has been established for a
context $C[\square]$, and let $\bv$ be a term not containing the
hole. For $C[\square]+\bv$ or $\bv+C[\square]$, the coefficient
$\bp$ is unchanged and the additive remainder becomes $\bd+\bv$;
when $\bd$ was omitted, the new remainder is $\bv$. For
$C[\square]\bv$ or $\bv C[\square]$, distributivity and
commutativity give
\[
 (\bp\,\square+\bd)\bv
 = (\bp\bv)\,\square+\bd\bv.
\]
If $\bp=1$, the new coefficient is $\bv$. If $\bd$ was omitted,
there is no additive remainder in the resulting expression. These are
all the ways to build a context in the language with two binary
operations, so induction proves \eqref{eq:context}. The coefficient
$\bp$ is a term, or is $1$ when the multiplicative context is absent.

Insert $\varphi(\bu)$ and $\varphi(\bu+\bq)$ into the same
context. Since $\varphi$ preserves addition and multiplication,
\[
 \bp\varphi(\bu+\bq)+\bd
 =\bp\varphi(\bu)+\bp\varphi(\bq)+\bd,
\]
which gives \eqref{eq:step}.

For the final assertion, recall that equational deduction is generated
by substitution, replacement in a context, symmetry and transitivity;
see \cite{BurrisSankappanavar,Dolinka}. A replacement at several
occurrences can be performed one occurrence at a time. Symmetry permits
either direction of \eqref{eq:step}, and transitivity concatenates
finitely many steps. The commutative ai-semiring laws have already been
incorporated into the algebra $\Pf(X_c^+)$, so their applications do
not change a normalized term. Thus a deduction gives a finite chain
of the stated kind.
\end{proof}

A left-to-right step of \eqref{eq:step} retains every old word and
adds words from $\bp\varphi(\bq)$. A right-to-left step replaces the
set of words by a subset. Thus a property inherited by subsets is
preserved under every right-to-left step.

\section{The divisibility-order construction}
\label{sec:semiring}

Let $W$ be a nonempty subset of $X_c^+$. As in \cite{Gao}, let
$W^{\leq}$ be the set of all nonempty commutative subwords of words
in $W$. For $u,v\in X_c^+$, their supremum in the divisibility
order is obtained by taking the maximum of the two exponents of each
letter. The semiring $S_c^*(W)$ has universe $W^{\leq}\cup\{0\}$,
with $0$ absorbing for both operations, and
\begin{align}
 u\cdot v&=
 \begin{cases}
 uv,&uv\in W^{\leq},\\
 0,&uv\notin W^{\leq},
 \end{cases}\label{eq:multiplication}\\
 u+v&=
 \begin{cases}
 \sup\{u,v\},&\sup\{u,v\}\in W^{\leq},\\
 0,&\sup\{u,v\}\notin W^{\leq}.
 \end{cases}\label{eq:addition}
\end{align}
This is the ideal quotient construction of \cite[Section~4]{Gao}.
On the free commutative semigroup with an absorbing zero, addition is
coordinatewise maximum of exponent vectors and multiplication adds
exponent vectors. The complement of $W^{\leq}$ is a multiplicative
ideal and an upper set, so identifying this complement gives precisely
\eqref{eq:multiplication}--\eqref{eq:addition}. In particular, the
operations satisfy the ai-semiring laws. When $W=\{w\}$, we write
$S_c^*(w)$.

\subsection{Powers of a single letter}
For $n\geq1$, the elements of $\Spow{n}$ are
\[
 a,a^2,\ldots,a^n,0.
\]
Its natural order is the chain
$a<a^2<\cdots<a^n<0$, and
\begin{equation}\label{eq:poweroperations}
 a^i+a^j=a^{\max\{i,j\}},\qquad
 a^ia^j=
 \begin{cases}
 a^{i+j},&i+j\leq n,\\
 0,&i+j>n.
 \end{cases}
\end{equation}
Thus every word of length at least $n+1$ has constant value $0$.
The map fixing $a,\ldots,a^n,0$ and sending $a^{n+1}$ to $0$
is a surjective homomorphism
\begin{equation}\label{eq:powerquotient}
 \Spow{n+1}\longrightarrow\Spow{n}.
\end{equation}
The unrestricted construction $\Spowinf$ has all positive powers
of $a$ as its nonzero elements. Their multiplication is not truncated.

\subsection{Linear words}
Let $k\geq1$ and let $a_1,\ldots,a_k$ be distinct letters. For
$W=\{a_1\cdots a_k\}$,
\begin{equation}\label{eq:subwords}
 W^{\leq}=
 \{a_{i_1}\cdots a_{i_r}:1\leq r\leq k,
                    \ 1\leq i_1<\cdots<i_r\leq k\}.
\end{equation}
It follows that $|\Slin{k}|=2^k$. Two nonzero elements have
nonzero product precisely when their contents are disjoint. Their sum
is always nonzero and contains every letter occurring in either element,
once each. Explicitly, if
$u=\prod_i a_i^{\varepsilon_i}$ and
$v=\prod_i a_i^{\delta_i}$, with exponents in $\{0,1\}$, then
\begin{equation}\label{eq:sup}
 u+v=\prod_{i=1}^k a_i^{\max\{\varepsilon_i,\delta_i\}}.
\end{equation}
Zero-exponent factors are omitted. For nonzero $u,v$,
\begin{equation}\label{eq:divisibility}
 u\leq v\quad\Longleftrightarrow\quad
 u\text{ is a commutative subword of }v.
\end{equation}
The element $0$ is the greatest element of this order.

The same description applies to $\Slininf$, whose nonzero
elements are all nonempty square-free commutative words on
$\{a_1,a_2,\ldots\}$. In this case,
\begin{equation}\label{eq:linearunion}
 \Slininf=\bigcup_{k\geq1}\Slin{k},
\end{equation}
where the inclusions are the natural subsemiring inclusions.

\begin{lemma}\label{lem:values}
Let $s_1,\ldots,s_m\in\Slin{k}$, where $m\geq1$.
\begin{enumerate}
\item[(i)] $s_1+\cdots+s_m\neq0$ if and only if every $s_i\neq0$.
\item[(ii)] The product $s_1\cdots s_m$ is nonzero if and only if
all factors are nonzero and their contents are pairwise disjoint.
\item[(iii)] If $m=k$ and $s_1\cdots s_k\neq0$, then
$s_i=a_{\pi(i)}$ for a permutation $\pi$ of $[k]$.
\item[(iv)] If $r\leq s$ and $s\neq0$, then $r\neq0$.
\end{enumerate}
Assertions \textup{(i)}, \textup{(ii)} and \textup{(iv)} also hold in
$\Slininf$.
\end{lemma}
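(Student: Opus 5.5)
The plan is to check each assertion directly from the description of the operations in \eqref{eq:multiplication}, \eqref{eq:addition} and \eqref{eq:sup}. Every step uses only the exponent-vector picture: a nonzero element is a $\{0,1\}$-vector indexed by the letters, addition is coordinatewise maximum, and multiplication is coordinatewise addition, with $0$ returned exactly when some coordinate would exceed $1$. Since this description is the same for $\Slin{k}$ and for $\Slininf$, the arguments for (i), (ii) and (iv) will transfer verbatim.

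For (i), I will induct on $m$. The case $m=1$ is trivial. If some $s_i=0$, the sum is $0$ because $0$ is absorbing for addition. If all $s_i$ are nonzero, then by \eqref{eq:sup} the partial sums stay nonzero: the coordinatewise maximum of two $\{0,1\}$-vectors is again a $\{0,1\}$-vector, nonempty, and its letters lie in $\{a_1,\dots,a_k\}$. Hence it belongs to $W^{\leq}$ by \eqref{eq:subwords}.

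For (ii), I will again induct on $m$, using that $0$ is absorbing for multiplication. For nonzero $u,v$, the product $uv$ lies in $W^{\leq}$ exactly when it is square-free, that is, when $c(u)\cap c(v)=\varnothing$. The inductive step rests on the identity $c(s_1\cdots s_{m-1})=\bigcup_{i<m}c(s_i)$ whenever the partial product is nonzero. Disjointness of $c(s_m)$ from this union is then equivalent to pairwise disjointness of the new factor from all earlier ones.

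For (iii), assume $m=k$ and the product is nonzero. By (ii), the contents $c(s_1),\dots,c(s_k)$ are $k$ nonempty pairwise disjoint subsets of the $k$-element set $\{a_1,\dots,a_k\}$. Counting forces each of them to be a singleton, and the singletons exhaust the letters. So $s_i=a_{\pi(i)}$ for a bijection $\pi$. This counting step is the only point where finiteness of $k$ is used, which is why (iii) is excluded from the final sentence of the lemma.

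For (iv), the argument is immediate. Here $r\leq s$ means $r+s=s$, and if $r=0$ then $r+s=0$ because $0$ is absorbing, so $s=0$; this is the contrapositive of the claim. Equivalently, \eqref{eq:divisibility} shows that $0$ is the top element. I do not expect any step to be a genuine obstacle. The only care needed is the bookkeeping of contents in the induction for (ii), together with the observation that nothing in (i), (ii) or (iv) depends on $k$ being finite.
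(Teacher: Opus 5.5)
Your proposal is correct and follows the paper's argument: absorption together with the square-free union description gives (i) and (ii), a counting argument gives (iii), and the absorption contradiction $0+s=0\neq s$ gives (iv). In (iii) the paper counts lengths (total length at least $k$, nonzero product of length at most $k$), which is equivalent to your count of disjoint nonempty contents, and your inductions in (i) and (ii) only spell out what the paper states directly.
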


\begin{proof}
A zero summand makes a sum zero by absorption. If all summands are
nonzero, their sum is the square-free word whose letters form the union
of their contents. This union is a nonempty finite subset of the
available alphabet, so the sum is nonzero. This proves (i).

A zero factor makes a product zero. If a letter occurs in two factors,
the product is not square-free and is zero by
\eqref{eq:multiplication}. Conversely, factors with disjoint contents
multiply to their square-free concatenation. This proves (ii).

For (iii), $k$ nonzero factors with disjoint contents have total length
at least $k$. A nonzero product in $\Slin{k}$ has length at
most $k$. Every factor therefore has length one, and the $k$ factors
are precisely the $k$ distinct letters.

For (iv), if $r=0$ and $r\leq s$, then $0+s=s$, whereas absorption
gives $0+s=0$. Hence $s=0$, a contradiction. The proofs of (i), (ii)
and (iv) use only finitely many letters and apply unchanged to
$\Slininf$.
\end{proof}

\section{Finite powers of one letter}
\label{sec:powers}

The chain order in \eqref{eq:poweroperations} permits a bounded
selection from the upper term of any valid inequality.

\begin{lemma}\label{lem:powerselection}
Let $n\geq1$ and suppose that $\bq\preceq_{\Spow{n}}\bu$,
where $\bq$ is a word and every word of $\bu$ has length at most
$n$. Then $\ell(\bq)\leq n$ and $c(\bq)\subseteq c(\bu)$.
Moreover, there is a nonempty additive subterm $\bu_0$ of $\bu$
such that
\[
 \bq\preceq_{\Spow{n}}\bu_0,
\]
and the variables of $\bq$ and $\bu_0$ together number at most
\begin{equation}\label{eq:powerbound}
 n\binom{2n+1}{n+1}.
\end{equation}
\end{lemma}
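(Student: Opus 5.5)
The plan is to translate everything into arithmetic on the chain $1<2<\cdots<n<\infty$. Identify $a^i$ with $i$ and $0$ with $\infty$. An assignment $\alpha$ then sends each variable to $\{1,\ldots,n,\infty\}$. By \eqref{eq:poweroperations}, a word $\bw$ takes the value $\sum_x \occ(x,\bw)\alpha(x)$, replaced by $\infty$ if it exceeds $n$, and a sum takes the maximum. Every word value is monotone in each $\alpha(x)$.

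\emph{The two preliminary claims.}
\begin{itemize}
\item For $\ell(\bq)\leq n$, assign $1$ to every variable. Every word of $\bu$ then has value $\ell(\bu_i)\leq n$, so $\alpha(\bu)$ is finite. This forces $\alpha(\bq)=\ell(\bq)\leq n$.
\item For $c(\bq)\subseteq c(\bu)$, suppose $x\in c(\bq)\setminus c(\bu)$. Assign $\infty$ to $x$ and $1$ to all other variables. Then $\alpha(\bq)=\infty$ while $\alpha(\bu)$ is finite, a contradiction.
\end{itemize}
In particular $s=|c(\bq)|\leq n$.

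\emph{Reduction to assignments that are $1$ off $c(\bq)$.} Let $\alpha$ be any assignment, and let $\beta$ agree with $\alpha$ on $c(\bq)$ and equal $1$ elsewhere. Then $\beta\leq\alpha$ pointwise and $\beta(\bq)=\alpha(\bq)$. By monotonicity, an inequality $\bq\preceq\bv$ holds in $\Spow{n}$ as soon as $\beta(\bq)\leq\beta(\bv)$ for every such $\beta$.

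\emph{Profiles.} Under such a $\beta$, a word $\bw$ of $\bu$ has value determined by its \emph{profile}. The profile is the vector $(\occ(x,\bw))_{x\in c(\bq)}$ together with the residual length $r(\bw)=\ell(\bw)-\sum_{x\in c(\bq)}\occ(x,\bw)$, because every variable outside $c(\bq)$ contributes exactly $1$. Thus words with the same profile take the same value under every such $\beta$.

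\emph{Choice of $\bu_0$.} Let $\bu_0$ consist of one representative word of $\bu$ for each profile that occurs. Then $\beta(\bu_0)=\beta(\bu)\geq\beta(\bq)$ for all such $\beta$, and the reduction gives $\bq\preceq_{\Spow{n}}\bu_0$.

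\emph{Counting.} A profile is a vector of $s+1$ nonnegative integers with sum between $1$ and $n$. The number of these is at most $\binom{n+s+1}{s+1}-1\leq\binom{2n+1}{n+1}$, using $s\leq n$. Each representative has length at most $n$, hence at most $n$ variables. Applying the content claim to $\bq\preceq\bu_0$ gives $c(\bq)\subseteq c(\bu_0)$, so the total count is at most $n\binom{2n+1}{n+1}$, which is \eqref{eq:powerbound}.

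The main obstacle is the reduction step. It relies on monotonicity holding also at $\infty$: raising any variable to $\infty$ sends every word containing it to $\infty$, and truncation at $n$ is monotone, so the pointwise comparison $\beta\leq\alpha$ transfers. I will check this carefully. The remaining steps are routine counting.
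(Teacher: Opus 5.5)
Your proposal is correct and follows the paper's proof essentially step for step: the same two test assignments for $\ell(\bq)\leq n$ and $c(\bq)\subseteq c(\bu)$, the same comparison assignment $\beta$ equal to $a$ off $c(\bq)$, the same profile vectors with one representative per profile forming $\bu_0$, and the same count. The only differences are cosmetic. You obtain $c(\bq)\subseteq c(\bu_0)$ by reapplying the content claim to $\bq\preceq\bu_0$, whereas the paper notes that positive coordinates persist in any chosen representative. You also omit the paper's case split on whether $\alpha(\bu_0)=0$, which is harmless since that split is not needed.
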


\begin{proof}
Assign $a$ to every variable. Every word of $\bu$ then has a
nonzero value, and so does their sum. If $\ell(\bq)>n$, its value
would be $0$, contradicting the inequality. Hence $\ell(\bq)\leq n$.
If $z\in c(\bq)\setminus c(\bu)$, assign $0$ to $z$ and $a$
to all other variables. The left-hand side is zero and the right-hand
side is nonzero, again a contradiction. Thus
$c(\bq)\subseteq c(\bu)$.

Write $c(\bq)=\{x_1,\ldots,x_r\}$, where $r\leq n$. Associate
to each word $\bw\in\bu$ the vector
\begin{equation}\label{eq:powertype}
 \left(\occ(x_1,\bw),\ldots,\occ(x_r,\bw),
       \sum_{z\notin c(\bq)}\occ(z,\bw)\right).
\end{equation}
Its entries are nonnegative integers with sum between $1$ and $n$.
There are
\[
 \binom{n+r+1}{r+1}-1
\]
possible such vectors. Choose one word from each vector that occurs,
and let $\bu_0$ be the sum of the chosen words. Every variable of
$c(\bq)$ still occurs in $\bu_0$: a positive entry in the
corresponding coordinate remains positive in any chosen representative.

Let $\alpha$ be an arbitrary assignment into $\Spow{n}$. If
$\alpha(\bu_0)=0$, the desired inequality holds because $0$ is
the greatest element. Suppose that $\alpha(\bu_0)\neq0$.
In particular, every variable of $c(\bq)$ has a nonzero value.
Define another assignment $\beta$ by
\[
 \beta(x)=
 \begin{cases}
 \alpha(x),&x\in c(\bq),\\
 a,&x\notin c(\bq).
 \end{cases}
\]
Since $a$ is the least element, $\beta(x)\leq\alpha(x)$ for
every variable. Both operations preserve the order, and therefore
$\beta(\bu_0)\leq\alpha(\bu_0)$.

Under $\beta$, words having the same vector
\eqref{eq:powertype} have the same value: variables outside
$c(\bq)$ all contribute exponent one. By the choice of representatives,
\[
 \beta(\bu)=\beta(\bu_0).
\]
The original inequality now gives
\[
 \alpha(\bq)=\beta(\bq)
 \leq\beta(\bu)=\beta(\bu_0)
 \leq\alpha(\bu_0).
\]
This proves $\bq\preceq_{\Spow{n}}\bu_0$.

Each selected word has length at most $n$. Since
$c(\bq)\subseteq c(\bu_0)$, the total number of variables is
at most
\[
 n\left(\binom{n+r+1}{r+1}-1\right)
 \leq n\binom{2n+1}{n+1},
\]
as required.
\end{proof}

\begin{theorem}\label{thm:finitepowers}
For every integer $n\geq1$, the semiring $\Spow{n}$ is finitely
based.
\end{theorem}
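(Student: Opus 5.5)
We plan to produce an explicit finite set $\Sigma$ of inequalities valid in $\Spow{n}$ and show that every identity of $\Spow{n}$ follows from $\Sigma$ together with the commutative ai-semiring laws. By Lemma~\ref{lem:normalization}, it suffices to derive every valid inequality $\bq\preceq\bu$ in which $\bq$ is a word. Take $\Sigma$ to consist of the following:
\begin{itemize}
\item[(a)] the inequalities $\bw\preceq \bq$ in which $\bq$ is a word of length $n+1$ and $\bw$ is any word of length at most $n+1$ on the variables of $\bq$ together with one extra variable; these are enough to derive $\bw\preceq\bq$ for all words $\bw$ when $\ell(\bq)\ge n+1$, by multiplication and substitution;
\item[(b)] all valid inequalities $\bq\preceq\bu_0$ in which $\bq$ and $\bu_0$ together use at most $N=n\binom{2n+1}{n+1}$ variables and every word of $\bu_0$ has length at most $n$.
\end{itemize}
Up to renaming of variables, (b) is a finite family.

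The first step deals with long words in $\bu$. Suppose some word $\bw\in\bu$ has length at least $n+1$. Then $\bw$ takes the value $0$, which is the top element, so every word with value in the semiring lies below it. Precisely, an identity of the shape $x\preceq \bw$ (or $\bq\preceq\bw$) with $\ell(\bw)\ge n+1$ should follow from finitely many schemata. Writing $\bw=\bw'\bw''$ with $\ell(\bw')=n+1$, it is enough to derive $\bq\preceq \bw$ for an arbitrary word $\bq$. For this I intend to include in $\Sigma$ the identity
\[
x\,y_1\cdots y_{n+1}\approx x^2 y_1\cdots y_{n+1},
\]
together with absorbing-type laws such as $y_1\cdots y_{n+1}+z\approx y_1\cdots y_{n+1}$. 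These hold because both sides are $0$ whenever a product of length $n+1$ appears, and because $0$ is absorbing for addition. Using the absorbing additive law with $z:=\bq$, we get $\bw+\bq\approx\bw$ directly. So whenever $\bu$ contains a long word, $\bq\preceq\bu$ is derivable; the additive law is precisely what makes this step cheap.

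In the remaining case every word of $\bu$ has length at most $n$, and Lemma~\ref{lem:powerselection} applies. It gives a subterm $\bu_0\subseteq\bu$ with $\bq\preceq\bu_0$ valid and with at most $N$ variables in total. That inequality is, up to renaming, a member of the family (b). Adding the remaining words of $\bu$ to both sides via the additive context of Lemma~\ref{lem:deduction}, and using that $\bu_0+\bu=\bu$, yields $\bq\preceq\bu$.

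The main obstacle I expect is the first step: making sure the absorbing identity $y_1\cdots y_{n+1}+z\approx y_1\cdots y_{n+1}$ genuinely holds in $\Spow{n}$ and covers all cases, including long words $\bq$ on the left, which arise when $\bu$ itself has a long word. Lemma~\ref{lem:powerselection} already rules out $\ell(\bq)>n$ when all words of $\bu$ are short. Validity is immediate since the value of a product of $n+1$ elements is $0$, and $0$ absorbs under addition. Substituting words for the $y_i$ then covers any $\bw$ of length at least $n+1$ after regrouping its factors. The finiteness of (b) is just the count of terms on at most $N$ variables with words of length at most $n$.
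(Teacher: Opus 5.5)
Your proposal is correct and is essentially the paper's proof. The absorbing law $z\preceq y_1\cdots y_{n+1}$ (the paper's \eqref{eq:longabsorption}, which your family (a) already contains) handles upper terms with a word of length at least $n+1$; otherwise Lemma~\ref{lem:powerselection} reduces $\bq\preceq\bu$ to one of finitely many valid inequalities on $n\binom{2n+1}{n+1}$ variables, and adding $\bu$ then yields $\bq\preceq\bu$. The remaining schemata in (a) and the identity $x\,y_1\cdots y_{n+1}\approx x^2y_1\cdots y_{n+1}$ are valid but unnecessary.
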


\begin{proof}
In addition to the commutative ai-semiring laws, take the identity
\begin{equation}\label{eq:longabsorption}
 y\preceq x_1\cdots x_{n+1}
\end{equation}
and every inequality $\bq\preceq\bu$ which holds in $\Spow{n}$,
uses variables from a fixed alphabet of size \eqref{eq:powerbound},
and has all its words of length at most $n$.
This is a finite set. Indeed, there are finitely many words of bounded
length on that alphabet and finitely many nonempty subsets of this
finite set of words. All the chosen identities hold in $\Spow{n}$.

Consider an arbitrary valid inequality $\bq\preceq\bu$.
If $\bu$ has a word $\bw$ of length at least $n+1$, write
$\bw$ as a product of $n+1$ nonempty words. Substitute these
words for $x_1,\ldots,x_{n+1}$ in
\eqref{eq:longabsorption}, and substitute $\bq$ for $y$.
This gives $\bq\preceq\bw$, and hence $\bq\preceq\bu$.

Otherwise Lemma~\ref{lem:powerselection} gives $\bu_0$ such that
$\bq\preceq\bu_0$ is valid and uses at most the number of variables
in \eqref{eq:powerbound}. After renaming variables, this inequality
is one of the finitely many selected inequalities. Since $\bu_0$
is an additive subterm of $\bu$, it implies
$\bq\preceq\bu$. Lemma~\ref{lem:normalization} now proves
that every valid identity follows from the proposed finite set.
\end{proof}

\begin{proposition}\label{prop:powerchain}
The varieties generated by finite powers form a strictly ascending chain:
\[
 \V(\Spow{1})\subsetneq\V(\Spow{2})
 \subsetneq\V(\Spow{3})\subsetneq\cdots.
\]
\end{proposition}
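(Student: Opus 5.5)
The plan has two parts: the inclusions and their strictness.

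For the inclusions I will use the surjective homomorphism \eqref{eq:powerquotient}, $\Spow{n+1}\to\Spow{n}$. It shows that $\Spow{n}$ is a homomorphic image of $\Spow{n+1}$, so $\Spow{n}\in\V(\Spow{n+1})$ and hence $\V(\Spow{n})\subseteq\V(\Spow{n+1})$. The only thing to check is that this map really preserves both operations. Sums are maxima in the chain $a<\cdots<a^{n+1}<0$, and collapsing the top two elements $a^{n+1}$ and $0$ is order-compatible. A product lands in $\{a^{n+1},0\}$ in $\Spow{n+1}$ exactly when the exponent sum exceeds $n$, which is exactly when the product is $0$ in $\Spow{n}$.

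For strictness I will exhibit an identity that holds in $\Spow{n}$ but fails in $\Spow{n+1}$. The natural choice is
\[
 y\preceq x_1\cdots x_{n+1},
\]
which is the identity \eqref{eq:longabsorption}.
\begin{itemize}
\item It holds in $\Spow{n}$: any product of $n+1$ elements is either $0$ (if some factor is $0$) or $a^m$ with $m\geq n+1$, hence again $0$. Since $0$ is the greatest element, the inequality holds.
\item It fails in $\Spow{n+1}$: assign $a$ to every $x_i$ and $0$ to $y$. The right side is $a^{n+1}\neq0$, and $0+a^{n+1}=0\neq a^{n+1}$.
\end{itemize}
So $\Spow{n+1}\notin\V(\Spow{n})$, and each inclusion is proper.

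No step is a real obstacle. The only care needed is the bookkeeping that \eqref{eq:powerquotient} is a homomorphism, and this follows directly from \eqref{eq:poweroperations}.
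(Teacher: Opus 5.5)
Your proof is correct and follows the paper's route: the inclusions come from the quotient homomorphism \eqref{eq:powerquotient}, and strictness comes from an identity that holds in $\Spow{n}$ but fails in $\Spow{n+1}$. The only difference is the separating identity. The paper uses the one-variable identity $x^{n+1}\approx x^{n+2}$ at $x=a$, while you use $y\preceq x_1\cdots x_{n+1}$ with $y=0$; both rest on the fact that a product of $n+1$ nonzero factors vanishes in $\Spow{n}$ but not in $\Spow{n+1}$.
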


\begin{proof}
The quotient maps \eqref{eq:powerquotient} give the inclusions.
The identity $x^{n+1}\approx x^{n+2}$ holds in $\Spow{n}$.
It fails in $\Spow{n+1}$ at $x=a$, where its two values are
$a^{n+1}$ and $0$. Thus each inclusion is strict.
\end{proof}

\section{The unrestricted linear family}
\label{sec:unrestricted}

The unrestricted linear-word semiring admits a normal form described
by finite simple graphs. We first establish its identity basis and then
record the two smallest finite cases.

\begin{theorem}\label{thm:unrestrictedbasis}
Relative to the commutative ai-semiring laws, a finite identity basis
for $\Slininf$ is
\begin{align}
 x&\preceq xy,\label{eq:extensive}\\
 y&\preceq x^2,\label{eq:squaretop}\\
 xyz&\approx xy+xz+yz.\label{eq:triangleexpansion}
\end{align}
\end{theorem}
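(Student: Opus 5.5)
The plan is to describe the term functions of $\Slininf$ by a graph invariant, and then show that the three identities reduce every term to a normal form determined by that invariant.

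\emph{Soundness.} Nonzero elements are finite nonempty sets of letters. Multiplication is disjoint union, and is $0$ if the factors overlap. Addition is union. For \eqref{eq:extensive}: if $xy$ has a nonzero value then that value contains the value of $x$, so the inequality holds by \eqref{eq:divisibility}; if $xy=0$ it holds because $0$ is the top. For \eqref{eq:squaretop}: $x^2$ is always $0$, since a nonzero element overlaps itself. For \eqref{eq:triangleexpansion}: by Lemma~\ref{lem:values}(i),(ii), both sides are nonzero exactly when $x,y,z$ take nonzero, pairwise disjoint values, and then both sides equal the union of those values.

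\emph{Semantic invariant.} Let $\bu$ be a normalized term. Call $\bu$ \emph{degenerate} if some word of $\bu$ is nonlinear. Otherwise let $G(\bu)$ be the graph with vertex set $c(\bu)$, in which $xy$ is an edge whenever $x\neq y$ occur together in some word of $\bu$.

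By Lemma~\ref{lem:values}:
\begin{itemize}
\item a degenerate term is constantly $0$;
\item a nondegenerate $\bu$ takes a nonzero value under $\alpha$ exactly when every variable is nonzero and adjacent variables get disjoint values, and that value is $\bigcup_{x\in c(\bu)}\alpha(x)$.
\end{itemize}
Conversely, the invariant is recovered from the function:
\begin{itemize}
\item sending the variables to distinct letters gives a nonzero value, so $\bu$ is not constantly $0$, and the value recovers $c(\bu)$;
\item sending $x,y$ to the same letter and all other variables to fresh letters gives $0$ exactly when $xy$ is an edge.
\end{itemize}
Hence a valid identity $\bu\approx\bv$ has either both sides degenerate, or $c(\bu)=c(\bv)$ and $G(\bu)=G(\bv)$.

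\emph{Syntactic reduction.} For degenerate $\bu$, pick a word $\bw=y^2\bp$ in $\bu$. Then \eqref{eq:extensive} gives $y^2\preceq\bw$ (when $\bp\neq 1$), and \eqref{eq:squaretop} gives $\bt\preceq y^2$ for every term $\bt$. So $\bu\approx\bu+\bt$ for all $\bt$, and two degenerate terms $\bu,\bv$ satisfy $\bu\approx\bu+\bv\approx\bv$.

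For nondegenerate $\bu$, define the normal form $N(\bu)$ as the sum of all variables of $c(\bu)$ and all edge words $xy$ of $G(\bu)$. I will derive $\bu\approx N(\bu)$ in three steps.
\begin{enumerate}
\item By \eqref{eq:extensive} with suitable substitutions, every nonempty subword of a word of $\bu$ can be added. This yields $\bu\approx\bu+N(\bu)$.
\item By induction on $r\geq 3$, prove $x_1\cdots x_r\approx\sum_{i<j}x_ix_j$. Apply \eqref{eq:triangleexpansion} with $x:=x_1$, $y:=x_2$ and $z:=x_3\cdots x_r$, giving $x_1x_2+x_1z+x_2z$. Then use the induction hypothesis on the two words $x_1z$ and $x_2z$ of length $r-1$; idempotence merges the overlapping pairs.
\item Replace each word of length at least $3$ in $\bu+N(\bu)$ by its pairs, all of which are already edges of $G(\bu)$. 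What remains is exactly $N(\bu)$.
\end{enumerate}
Combining this with the invariant above, a valid identity has both sides reducible to the same normal form, so it is derivable from \eqref{eq:extensive}--\eqref{eq:triangleexpansion}.

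The main obstacle is Step 2 of the reduction. Replacing a word inside a sum must be done through contexts as in Lemma~\ref{lem:deduction}, and the base case $r=3$ has to match the literal form of \eqref{eq:triangleexpansion}.
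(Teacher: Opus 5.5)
Your proof is correct and takes essentially the same route as the paper. Both arguments check soundness via Lemma~\ref{lem:values} and collapse every term containing a nonlinear word using \eqref{eq:extensive} and \eqref{eq:squaretop}. Both then derive $x_1\cdots x_r\approx\sum_{i<j}x_ix_j$ by induction from \eqref{eq:triangleexpansion}, and separate graph normal forms with the distinct-letters assignment and the same-letter-on-a-pair assignment. The differences are cosmetic: your normal form keeps every variable as a singleton rather than only the isolated ones, which is harmless since $x\preceq xy$ absorbs them, and you derive the collapse of degenerate terms syntactically rather than by the paper's observation that squares form the common absorbing top in every model.
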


\begin{proof}
These identities hold in the stated semiring. A product is either zero
or a square-free word containing each factor, which gives
\eqref{eq:extensive}. Every square is zero, giving
\eqref{eq:squaretop}. For \eqref{eq:triangleexpansion}, a repeated
letter in the three factors makes both the triple product and at least
one pair product zero. If no letter is repeated, both sides are the
square-free word containing all letters of the three factors.

We prove completeness. In every model of \eqref{eq:squaretop}, each
square is the greatest element in the additive order, so all squares
coincide. By \eqref{eq:extensive}, this greatest element is also
multiplicatively absorbing: if its value is $s$, then
$s\leq st\leq s$, whence $st=s$. Thus a term containing any
nonlinear word is equivalent to a square term.

Consider a term all of whose words are linear. Induction on $r$, using
\eqref{eq:triangleexpansion}, gives
\begin{equation}\label{eq:pairsexpansion}
 x_1\cdots x_r\approx\sum_{1\leq i<j\leq r}x_ix_j
 \qquad(r\geq2).
\end{equation}
For the induction step, apply \eqref{eq:triangleexpansion} to
$x_1\cdots x_{r-2}$, $x_{r-1}$ and $x_r$. The two resulting
words of length $r-1$ supply all pairs except
$x_{r-1}x_r$, and the third summand supplies that pair.

For a term $\bu$ with only linear words, let $\GG_{\bu}$ have
vertex set $c(\bu)$ and an edge $\{x,y\}$ whenever $x$ and $y$
occur together in a word of $\bu$. Let $I$ be its isolated vertices.
By \eqref{eq:pairsexpansion} and \eqref{eq:extensive}, the term is
equivalent to
\begin{equation}\label{eq:graphnormalform}
 \sum_{\{x,y\}\in E(\GG_{\bu})}xy+\sum_{z\in I}z.
\end{equation}
Empty sums in this display are omitted. At least one summand remains.
A singleton variable occurring in an edge is absorbed by that edge,
which explains why only isolated vertices occur as singleton summands.

A graph normal form is distinct from the square normal form: assign
different letters to all its variables. Its value is a nonzero
square-free word, whereas a square has value zero. Two graph normal
forms with different vertex sets are distinguished by the same
assignment, since their values have different contents.

Finally, suppose that two graph normal forms have the same vertices
but different edge sets. Choose an edge $\{x,y\}$ present in one and
absent from the other. Assign the same letter to $x$ and $y$, and
assign distinct new letters to every other variable. The form containing
$xy$ has value zero. In the other form every pair product is nonzero,
so its value is nonzero by Lemma~\ref{lem:values}(i). Hence these
normal forms are distinguished as well. Every identity holding in
$\Slininf$ therefore has equal normal forms and follows from
\eqref{eq:extensive}--\eqref{eq:triangleexpansion}.
\end{proof}

\Needspace{10\baselineskip}
The proof gives a useful criterion for inequalities.

\begin{lemma}\label{lem:unrestrictedcriterion}
Let $\bq$ be a word and $\bu$ a term. If $\bu$ contains a
nonlinear word, then $\bq\preceq_{\Slininf}\bu$.
If all words of $\bu$ are linear, then
$\bq\preceq_{\Slininf}\bu$ holds if and only if
\begin{enumerate}
\item[(i)] $\bq$ is linear;
\item[(ii)] $c(\bq)\subseteq c(\bu)$;
\item[(iii)] each pair of distinct variables in $c(\bq)$ occurs
together in some word of $\bu$.
\end{enumerate}
\end{lemma}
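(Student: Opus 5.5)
We argue directly with assignments into $\Slininf$, using Lemma~\ref{lem:values} and the structure from the proof of Theorem~\ref{thm:unrestrictedbasis}.

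\emph{Nonlinear case.} Suppose $\bu$ contains a nonlinear word $\bw$, so some variable $x$ has $\occ(x,\bw)\geq2$. For any assignment $\alpha$, either $\alpha(x)=0$ or $\alpha(x)^2$ is not square-free. In both cases $\alpha(\bw)=0$ by Lemma~\ref{lem:values}(ii). Then $\alpha(\bu)=0$ by absorption, and $0$ is the greatest element. Hence $\alpha(\bq)\leq\alpha(\bu)$.

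\emph{Linear case, sufficiency.} Assume (i)--(iii) and fix $\alpha$.
\begin{enumerate}
\item[(1)] If $\alpha(\bu)=0$, there is nothing to prove.
\item[(2)] Otherwise, by Lemma~\ref{lem:values}(i) every word of $\bu$ has nonzero value. By (ii) every variable of $\bq$ occurs in some word, so by (ii) of Lemma~\ref{lem:values} each $\alpha(x)$ with $x\in c(\bq)$ is nonzero.
\item[(3)] By (iii), for distinct $x,y\in c(\bq)$ both lie in one word of $\bu$, whose value is nonzero. So $\alpha(x)$ and $\alpha(y)$ have disjoint contents.
\item[(4)] Since $\bq$ is linear, Lemma~\ref{lem:values}(ii) gives that $\alpha(\bq)$ is nonzero. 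It is the square-free word on $\bigcup_{x\in c(\bq)}c(\alpha(x))$.
\item[(5)] Each $c(\alpha(x))$ is contained in the content of $\alpha(\bw)$ for some word $\bw\ni x$. By \eqref{eq:sup}, $\alpha(\bu)$ is the square-free word whose content is the union of the contents of the $\alpha(\bw)$.
\item[(6)] Therefore $c(\alpha(\bq))\subseteq c(\alpha(\bu))$, and \eqref{eq:divisibility} gives $\alpha(\bq)\leq\alpha(\bu)$.
\end{enumerate}

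\emph{Linear case, necessity.} Each failing condition is refuted by an assignment that makes $\alpha(\bu)$ nonzero and $\alpha(\bq)$ either zero or not below $\alpha(\bu)$. Using fresh letters is allowed because the alphabet is infinite.
\begin{enumerate}
\item[(i)] If $\bq$ is not linear, assign distinct letters $a_i$ to all variables. Each linear word of $\bu$ gets a nonzero value, so $\alpha(\bu)\neq0$ by Lemma~\ref{lem:values}(i). The square in $\bq$ forces $\alpha(\bq)=0$, and Lemma~\ref{lem:values}(iv) gives a contradiction.
\item[(ii)] If $z\in c(\bq)\setminus c(\bu)$, assign $0$ to $z$ and distinct letters to the other variables. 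Then $\alpha(\bq)=0\neq\alpha(\bu)$, again contradicting Lemma~\ref{lem:values}(iv).
\item[(iii)] If distinct $x,y\in c(\bq)$ never occur together in a word of $\bu$, assign the same letter to $x$ and $y$ and distinct fresh letters elsewhere. Every word of $\bu$ stays linear in letters and so is nonzero, giving $\alpha(\bu)\neq0$. But $\alpha(\bq)$ contains the shared letter squared, so $\alpha(\bq)=0$, a contradiction. This is the same device as in the proof of Theorem~\ref{thm:unrestrictedbasis}.
\end{enumerate}

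The only delicate point is the sufficiency argument. The key observation is that nonzeroness of $\alpha(\bu)$ transfers the pairwise disjointness needed for $\alpha(\bq)\neq0$ through condition (iii).
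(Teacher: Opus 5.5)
Your proof is correct and follows the paper's argument step for step. For necessity you use the same three separating assignments: distinct letters everywhere, zero on a variable missing from $\bu$, and a shared letter on a non-co-occurring pair. For sufficiency you use the same chain: a nonzero upper value, via Lemma~\ref{lem:values}(i),(ii), forces nonzero and pairwise disjoint values on $c(\bq)$, and content inclusion in the divisibility order finishes the argument. You just spell out details the paper leaves implicit.
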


\begin{proof}
A nonlinear word has constant value zero, so the first assertion
follows from absorption. Suppose that all words of $\bu$ are linear.
Assigning distinct letters to all variables gives a nonzero value to
$\bu$ and shows that a nonlinear $\bq$ is impossible.
If a variable of $\bq$ is absent from $\bu$, assign zero to it
and distinct letters to the other variables; this disproves the
inequality. If $x,y\in c(\bq)$ never occur together in a word
of $\bu$, assign the same letter to $x,y$ and distinct new letters
to all remaining variables. Then $\bu$ is nonzero and $\bq$ is
zero. These assignments prove necessity.

For sufficiency, take an assignment for which $\bu$ is nonzero.
Every variable in its content has a nonzero value. By (iii) and
Lemma~\ref{lem:values}(ii), the values of the distinct variables of
$\bq$ have pairwise disjoint contents. Hence $\bq$ is nonzero.
By (ii), each letter in its value also occurs in the value of $\bu$.
The divisibility order now gives the inequality. If $\bu$ has value
zero, the inequality holds automatically.
\end{proof}

\begin{corollary}\label{cor:linearinfjoin}
One has
\begin{equation}\label{eq:linearinfjoin}
 \V(\Slininf)=\bigvee_{k\geq1}\V(\Slin{k}).
\end{equation}
This variety is locally finite and is not generated by finitely many
finite algebras.
\end{corollary}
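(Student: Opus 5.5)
The plan has three parts: the equality, local finiteness, and the failure of finite generation.

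\emph{The equality.} By \eqref{eq:linearunion}, each $\Slin{k}$ is a subsemiring of $\Slininf$, so $\bigvee_{k}\V(\Slin{k})\subseteq\V(\Slininf)$. For the reverse inclusion, take an identity that holds in every $\Slin{k}$ and show it holds in $\Slininf$. Any assignment of the finitely many variables of the identity into $\Slininf$ uses only finitely many letters. After renaming those letters it lands in some $\Slin{k}$, which is a subsemiring. Hence the identity holds at that assignment. Alternatively, $\Slininf$ is the directed union of the subalgebras $\Slin{k}$, so it lies in the variety they generate: it embeds into an ultraproduct, or one can use the fact that a directed union of members of a variety lies in the variety. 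Either way the equality follows.

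\emph{Local finiteness.} I use the normal forms from the proof of Theorem~\ref{thm:unrestrictedbasis}. Modulo the basis \eqref{eq:extensive}--\eqref{eq:triangleexpansion}, every term in $r$ variables is equivalent either to the square normal form or to a graph normal form \eqref{eq:graphnormalform}. A graph normal form is determined by a graph on a subset of the $r$ variables. There are therefore finitely many such forms, so the free algebra on $r$ generators is finite.

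\emph{Not generated by finitely many finite algebras.} This is the main step. Suppose $\V(\Slininf)=\V(A_1,\ldots,A_t)$ with each $A_i$ finite, and let $N=\max|A_i|$. I would exhibit an identity that holds in every algebra of size at most $N$ in this variety but fails in $\Slininf$. My candidate is the following. Let $m=N+1$ and take variables $x_1,\ldots,x_m$. Consider
\[
 \bq=x_1\cdots x_m \preceq \sum_{i<j}x_ix_j \;\text{ with one pair omitted}?
\]
That choice fails in $\Slininf$ by Lemma~\ref{lem:unrestrictedcriterion}, but it also need not hold in the $A_i$. A better route is pigeonhole. In an algebra of the variety with at most $N$ elements, among $N+1$ values two coincide, say $x_i=x_j$. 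Then the square $x_i^2$, which equals the top element, appears. So the identity
\[
 \sum_{1\leq i<j\leq N+1}x_ix_j\;\approx\;\sum_{1\leq i<j\leq N+1}x_ix_j+y
\]
holds in every such $A_i$. Indeed, the right side contains the pair product $x_ix_j=x_i^2$, which is the absorbing top element, and by \eqref{eq:squaretop} $y$ lies below it.

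This identity fails in $\Slininf$: assign distinct letters to the $x_i$ and a new letter to $y$. Hence the identity holds in $\V(A_1,\ldots,A_t)$ but not in $\V(\Slininf)$, which is a contradiction.

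The point to check carefully is that the top-element argument uses the identities \eqref{eq:extensive} and \eqref{eq:squaretop}. Both hold in each $A_i$ because $A_i$ belongs to the variety.
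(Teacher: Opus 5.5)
Your proposal is correct and follows the paper's proof. The directed-union argument for the equality and the count of graph normal forms for local finiteness are the same, and your pigeonhole step (a repeated value among $N+1$ elements gives a square, which is the greatest element by \eqref{eq:squaretop}) is the paper's common nilpotence bound. The only difference is cosmetic: you separate with $\sum_{i<j}x_ix_j\approx\sum_{i<j}x_ix_j+y$ rather than a long product, and since $\sum_{i<j}x_ix_j\approx x_1\cdots x_{N+1}$ in this variety by \eqref{eq:pairsexpansion}, the two are equivalent, though yours avoids the multiplicative absorption supplied by \eqref{eq:extensive}.
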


\begin{proof}
The union in \eqref{eq:linearunion} is directed. Any evaluation of
finitely many terms uses finitely many elements and therefore occurs
in some $\Slin{k}$. An identity holds in the union exactly when
it holds in every finite member, proving \eqref{eq:linearinfjoin}.

On any fixed finite set of variables there are only finitely many graph
normal forms and the square normal form. Hence the free algebra on
that set is finite, proving local finiteness.

In a finite member of this variety, every square is the common absorbing
greatest element by \eqref{eq:squaretop} and
\eqref{eq:extensive}. If the member has $d$ elements, every product
of $d+1$ elements contains a repeated factor, and hence is this
greatest element. Consequently, any finite collection of finite members
has a common bound on its nilpotence degree. The semirings
$\Slin{k}$ have nonzero products of $k$ factors for arbitrarily
large $k$. They cannot all belong to a variety with such a common bound.
\end{proof}

\begin{proposition}\label{prop:insideS53}
The variety $\V(\Slininf)$ is the subvariety of $\V(S_{53})$
defined by $y\preceq x^2$, and it is a proper subvariety.
\end{proposition}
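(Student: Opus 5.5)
The plan is to prove the two inclusions between $\V(\Slininf)$ and the subvariety $\mathcal{W}$ of $\V(S_{53})$ defined by $y\preceq x^2$, and then to separate $\mathcal{W}$ from $\V(S_{53})$ by a single identity.

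First I show $\V(\Slininf)\subseteq\mathcal{W}$. By \cite[Lemma~4.4]{Gao}, $\Slin{k}\in\V(S_{53})$ for every $k$. Corollary~\ref{cor:linearinfjoin} writes $\V(\Slininf)$ as the join of the $\V(\Slin{k})$, so $\V(\Slininf)\subseteq\V(S_{53})$. The identity \eqref{eq:squaretop} holds in $\Slininf$ by Theorem~\ref{thm:unrestrictedbasis}, which places $\V(\Slininf)$ inside $\mathcal{W}$.

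For the reverse inclusion $\mathcal{W}\subseteq\V(\Slininf)$, I use Theorem~\ref{thm:unrestrictedbasis}: it suffices that every member of $\mathcal{W}$ satisfies \eqref{eq:extensive}, \eqref{eq:squaretop} and \eqref{eq:triangleexpansion}. The identity \eqref{eq:squaretop} holds by definition of $\mathcal{W}$. For \eqref{eq:extensive} and \eqref{eq:triangleexpansion}, I verify directly that they hold in the three-element semiring $S_{53}$, using its Cayley tables as recorded in \cite{Gao}. Since $S_{53}$ has only three elements, this is a finite check.
\begin{itemize}
\item For $x\preceq xy$: check that $x+xy=xy$ for all $x,y$, i.e.\ that every product lies above each of its factors in the natural order.
\item For $xyz\approx xy+xz+yz$: check the case of some equal arguments and the case of pairwise distinct arguments separately. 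Alternatively, I would invoke the known finite basis of $S_{53}$, if it already lists these laws or makes them immediate consequences.
\end{itemize}
Once both identities hold in $S_{53}$, they hold throughout $\V(S_{53})$ and hence throughout $\mathcal{W}$, giving the equality $\mathcal{W}=\V(\Slininf)$.

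For properness, it suffices that $y\preceq x^2$ fails in $S_{53}$. That is, I exhibit an element $s\in S_{53}$ whose square is not the greatest element, together with some $t$ satisfying $t+s^2\neq s^2$. I expect this to be read off the table: $S_{53}$ should contain a multiplicative idempotent, or another element with a small square, that is not the top.

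The main obstacle is the identity $xyz\approx xy+xz+yz$ in $S_{53}$. If a direct table check turns out awkward, the fallback is to derive it from the defining identities of $\V(S_{53})$ supplied by its known basis.
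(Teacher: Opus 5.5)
Your proposal is correct and follows the paper's proof step for step. The three steps are:
\begin{itemize}
\item containment in $\V(S_{53})$ via \cite[Lemma~4.4]{Gao} and Corollary~\ref{cor:linearinfjoin};
\item the reverse inclusion, by noting that \eqref{eq:extensive} and \eqref{eq:triangleexpansion} hold in $S_{53}$ so that Theorem~\ref{thm:unrestrictedbasis} applies (the paper cites the basis in \cite{ShaoRenGao} rather than checking tables);
\item properness, from a failure of $y\preceq x^2$ in $S_{53}$.
\end{itemize}
Your expectation about the witness is right: in $S_{53}=\{0,a,1\}$ with $1<a<0$ and $1^2=1$, the assignment $x=1$, $y=a$ gives $y+x^2=a\neq 1=x^2$, which is exactly the paper's witness.
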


\begin{proof}
By \cite[Lemma~4.4]{Gao} and
Corollary~\ref{cor:linearinfjoin}, $\V(\Slininf)$ is contained
in $\V(S_{53})$. The identities \eqref{eq:extensive} and
\eqref{eq:triangleexpansion} hold in $S_{53}$; see the basis
recalled in \cite[proof of Theorem~1.2]{ShaoRenGao}.
Thus adjoining $y\preceq x^2$ to the identities of $S_{53}$ gives
all the identities in Theorem~\ref{thm:unrestrictedbasis}, proving
the reverse inclusion. In the notation of \cite[Table~2]{Gao},
$S_{53}=\{0,a,1\}$ with $1<a<0$ and $1^2=1$.
It does not satisfy $y\preceq x^2$, for example at $x=1,y=a$.
The inclusion is therefore strict.
\end{proof}

\Needspace{7\baselineskip}

\begin{proposition}\label{prop:smalllinear}
The semirings $S_c^*(a)$ and $S_c^*(ab)$ are finitely based.
Relative to the commutative ai-semiring laws, a basis for $S_c^*(ab)$ is
\begin{equation}\label{eq:linear2basis}
 x\preceq xy,\qquad y\preceq x^2,\qquad
 t\preceq xyz,\qquad xt\preceq xy+yz+zt.
\end{equation}
\end{proposition}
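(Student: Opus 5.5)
The first assertion needs no new argument: $S_c^*(a)=\Spow{1}$, which is finitely based by Theorem~\ref{thm:finitepowers}. For $S_c^*(ab)$ I would follow the normal-form strategy of Theorem~\ref{thm:unrestrictedbasis}.

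\emph{Soundness.} The four identities in \eqref{eq:linear2basis} hold in $S_c^*(ab)$.
\begin{itemize}
\item $x\preceq xy$ and $y\preceq x^2$ hold as in $\Slininf$.
\item $t\preceq xyz$ holds because any product of three nonzero elements has length at least $3$ and so is $0$.
\item For $xt\preceq xy+yz+zt$, suppose the right side is nonzero. By Lemma~\ref{lem:values}, $x,y,z,t$ then take values in $\{a,b\}$ with $x\neq y$, $y\neq z$ and $z\neq t$. Hence $x=z$ and $y=t$, so $x\neq t$. Thus $xt=ab$, which is the maximum nonzero element and lies below the nonzero right side.
\end{itemize}

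\emph{Reduction to graph terms.} In any model, $y\preceq x^2$ makes all squares equal to a greatest element, and $x\preceq xy$ makes it absorbing, exactly as in the proof of Theorem~\ref{thm:unrestrictedbasis}. The identity $t\preceq xyz$ makes every word of length at least $3$ equal to this top as well. So every term containing a nonlinear word or a word of length $\geq3$ is equivalent to the square normal form. The remaining terms are sums of variables and products $xy$ of distinct variables. These are encoded by a graph $\GG_\bu$ on $c(\bu)$ together with its isolated vertices; singletons lying on an edge are absorbed by $x\preceq xy$.

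\emph{Closing the graph.} Substitute into the fourth identity.
\begin{itemize}
\item The instance $t=x$ gives $x^2\preceq xy+yz+zx$, so any triangle forces the top.
\item In general, the fourth identity adds the edge $xt$ whenever $x$-$y$-$z$-$t$ is a path. Iterating this along walks of odd length joins the endpoints of every odd walk.
\item If a component contains an odd cycle, this produces a loop-type edge (a square) or a triangle, and the term becomes top.
\item Otherwise each nontrivial component is bipartite, and it closes to the complete bipartite graph on its bipartition.
\end{itemize}
The normal forms are therefore the top term and the sums $\sum xy+\sum z$. In the latter, each nontrivial component is complete bipartite and the $z$ range over the isolated vertices. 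Every term is provably equal to one of them.

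\emph{Distinguishing normal forms.} The top is separated from every graph form by 2-colouring each component with $a,b$. Two graph forms with different vertex sets are separated by assigning $0$ to a variable present in only one of them. Now suppose the vertex sets agree and an edge $xy$ lies in the first form but not in the second. Then in the second form $x$ and $y$ either lie on the same side of one complete bipartite component, or lie in different components, or one of them is isolated. In each case, colour the second form's components so that both $x$ and $y$ receive $a$. This keeps the second form nonzero by Lemma~\ref{lem:values}(i),(ii), while $xy=a\cdot a=0$ kills the first form.

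Hence distinct normal forms are separated in $S_c^*(ab)$, and every valid identity follows from \eqref{eq:linear2basis}. The step I expect to need the most care is the closure step: checking that repeated use of $xt\preceq xy+yz+zt$ really yields the full complete bipartite graph, and that odd cycles collapse to the top. This requires an induction on walk length in which each newly derived edge shortens an odd walk by two.
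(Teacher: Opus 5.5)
Your proposal is correct and follows essentially the same route as the paper. You reduce to graph terms using the first three identities, and you use the odd-walk completion from $xt\preceq xy+yz+zt$ to reach either the top or complete bipartite components with isolated vertices. You then separate normal forms by $\{a,b\}$-colourings, either proper ones or ones deliberately monochromatic on a missing edge. One detail to state explicitly: in the different-vertex-sets case, after assigning $0$ to a variable that only one form contains, you should properly $2$-colour the other form so that it stays nonzero, as the paper does.
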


\begin{proof}
The one-letter case is Theorem~\ref{thm:finitepowers} with $n=1$.
The four-element case is also the semiring $S_{(4,369)}$ in
\cite[Lemma~4.3]{ShaoRenGao}; we give the normal-form argument
for the basis \eqref{eq:linear2basis}.

All four identities hold in $S_c^*(ab)$. For the last one, a nonzero
value of $xy+yz+zt$ requires all three pair products to be nonzero.
The values of $x,y,z,t$ must then be single letters alternating between
$a$ and $b$ along the path. Thus $x$ and $t$ receive different
letters, and $xt$ has the same nonzero value $ab$ as the upper term.
When the upper term is zero, the inequality holds as well.

The first three identities make all squares and all words of length
at least three equal to the common absorbing greatest element.
Every other term reduces to a sum of edge words and isolated variables.
The fourth identity permits adding an edge between the endpoints of
any three-edge walk. Repeated application permits adding an edge
between the endpoints of any odd walk: replace its first three edges
by the permitted shortcut, and proceed by induction on its odd length.

If a graph contains an odd cycle, this operation produces a loop,
that is, a square summand, so the term becomes the greatest element.
If the graph is bipartite, it adds precisely all edges between the two
parts of each connected component and adds no other edges. We obtain
a disjoint union of complete bipartite graphs and isolated vertices.

Such a form has a nonzero value under a proper two-coloring, so it is
distinct from the greatest-element form. If two such forms have different
contents, assign zero to a variable present only in one and properly
two-color the other form; this distinguishes them. If their contents
agree but their completed edge sets differ, choose an edge $\{x,y\}$
present only in the first. In the second, the vertices $x,y$ lie either
in the same part of one component or in different components, allowing
a proper two-coloring with $x,y$ equal in color. Interpreting the colors
as $a,b$ makes the first form zero and the second nonzero.
Thus distinct normal forms are separated, proving completeness.
\end{proof}

\section{Hypergraph methods}
\label{sec:hypergraphmethods}
\subsection{Strong colorings and hyperforests}
\label{sec:hyperforests}

We use the hypergraph notation of \cite[Section~3]{Gao}. A finite
hypergraph is a pair $\HH=(V,E)$, where $E$ is a family of nonempty
subsets of the finite set $V$. The elements of $V$ are its vertices,
and the members of $E$ are its hyperedges. A subset of $V$ is a
\emph{subhyperedge} if it is contained in some hyperedge. A hypergraph
is \emph{$k$-uniform} if every hyperedge has exactly $k$ vertices.
A vertex is isolated if it belongs to no hyperedge.

A \emph{Berge cycle} of length $m\geq 2$ is an alternating sequence
\[
 v_1,e_1,v_2,e_2,\ldots,v_m,e_m,v_1,
\]
in which the vertices $v_1,\ldots,v_m$ are distinct, the hyperedges
$e_1,\ldots,e_m$ are distinct, and
\[
 \{v_i,v_{i+1}\}\subseteq e_i\quad(1\leq i<m),
 \qquad
 \{v_m,v_1\}\subseteq e_m.
\]
The \emph{girth} $g(\HH)$ is the length of a shortest Berge cycle.
If there is no Berge cycle, we call $\HH$ a \emph{hyperforest} and
put $g(\HH)=\infty$. In particular, two distinct hyperedges with
two common vertices give a Berge cycle of length two. Every
hyperforest, and every hypergraph of girth greater than two, therefore
has the property that distinct hyperedges meet in at most one vertex.

A \emph{strong $k$-coloring} of a hypergraph $\HH$ is a mapping
\[
 \kappa:V(\HH)\longrightarrow[k]
\]
which is injective on every hyperedge. This definition also applies
when the hyperedges have different sizes, all at most $k$. If $\HH$
is $k$-uniform, a strong $k$-coloring assigns all $k$ colors to every
hyperedge.

Let $B\subseteq V(\HH)$, where $|B|\leq 2$. A partial coloring
$\lambda:B\to[k]$ is \emph{valid} if any two distinct vertices in
$B$ which form a subhyperedge receive distinct colors. Thus two
vertices which do not lie together in a hyperedge may be assigned the
same color. We say that $\HH$ is \emph{2-robustly strong
$k$-colorable} if every valid partial coloring on at most two vertices
extends to a strong $k$-coloring of $\HH$. For $k=3$, this is the
terminology used in \cite{ZhuoLiangWuZhao}. We shall need the following
statement for every $k\geq 3$.

\begin{lemma}\label{lem:robust}
For every integer $k\geq 3$, every finite $k$-uniform hyperforest is
2-robustly strong $k$-colorable.
\end{lemma}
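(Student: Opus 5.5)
We argue by induction on the number of hyperedges, with a stronger hypothesis: \emph{every} valid partial coloring on at most two vertices extends. With no hyperedges, every vertex is isolated, and any partial coloring extends by coloring the remaining vertices arbitrarily. Isolated vertices play no role in the strong-coloring condition, so we may also remove them freely.

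Suppose $\HH$ has at least one hyperedge. Since $\HH$ is a hyperforest, we first locate a leaf hyperedge. Consider the bipartite incidence graph of vertices versus hyperedges. A Berge cycle in $\HH$ is exactly a cycle in this graph, so the incidence graph is a forest. Distinct hyperedges meet in at most one vertex, so hyperedges correspond to nodes of degree $k$, vertices lie in several hyperedges, and the forest structure gives a hyperedge $e$ meeting the union of all other hyperedges in at most one vertex $w$. To see this, take a hyperedge at maximal distance from a root in a component of the incidence forest; all vertices of $e$ other than its parent vertex are then contained in no other hyperedge. Let $\HH'$ be obtained by deleting $e$ together with its private vertices $e\setminus\{w\}$.

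Given a valid partial coloring $\lambda$ on $B$ with $|B|\le 2$, we split into cases according to how $B$ meets the private part $P=e\setminus\{w\}$.
\begin{itemize}
\item[(a)] If $B\cap P=\varnothing$, restrict $\lambda$ to $\HH'$. It is still valid, since subhyperedges of $\HH'$ are subhyperedges of $\HH$. Extend it by induction, then color $P$ with the $k-1$ colors missing from $w$, bijectively.
\item[(b)] If $B\subseteq P$, the vertices of $B$ lie in $e$ together, so they have distinct colors. Color $\HH'$ by induction using a coloring that gives $w$ a color outside $\lambda(B)$. Such a color exists because $k\ge 3$ and $|B|\le2$, and we prescribe it by induction with $B'=\{w\}$. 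Then fill in $P$ bijectively with the remaining colors.
\item[(c)] If $B=\{p,v\}$ with $p\in P$ and $v\notin P$, then $\{p,v\}$ is a subhyperedge exactly when $v=w$. If $v=w$, the colors differ; use induction on $\HH'$ with $B'=\{w\}$ and complete $P$. If $v\ne w$, $v\in V(\HH')$, and $p,v$ may share a color. We then need a strong coloring of $\HH'$ with $v$ colored $\lambda(v)$ and $w$ colored differently from $\lambda(p)$.
\end{itemize}

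The main obstacle is the subcase of (c) with $v\ne w$. Here we apply the induction hypothesis to $\HH'$ with $B'=\{v,w\}$: give $v$ the color $\lambda(v)$, and give $w$ a color $c\neq\lambda(p)$ which also differs from $\lambda(v)$ if $\{v,w\}$ is a subhyperedge of $\HH'$. Since $k\ge3$, such a $c$ exists, and $\{v,w\}$ with these colors is valid in $\HH'$. This is precisely where the two-vertex robustness of the hypothesis, rather than single-vertex extendability, is needed to keep the induction closed. Finally we color $P$ bijectively by the $k-1$ colors other than $c$, placing $\lambda(p)$ on $p$, which is possible since $\lambda(p)\neq c$.

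In every case the resulting map is injective on $e$ and, by induction, on every hyperedge of $\HH'$, so it is a strong $k$-coloring extending $\lambda$.
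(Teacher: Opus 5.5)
Your proof is correct and follows essentially the same route as the paper: induction on the number of hyperedges, a leaf hyperedge $e$ taken at maximal distance in the incidence forest, and a case split on how $B$ meets $P=e\setminus\{w\}$. Your subcase $v\neq w$ of (c) is exactly the paper's Case~2 with $r\notin B$ and a second precolored vertex. The one point you gloss over is when $e$ meets no other hyperedge, which the paper treats separately; there you can take $w$ to be any vertex of $e$ and keep it as an isolated vertex of $\HH'$, and your cases then go through unchanged.
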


\begin{proof}
We use induction on the number of hyperedges. A hyperforest with no
hyperedges has no restrictions on its colorings. Any given partial
coloring can be extended by assigning arbitrary colors to the remaining
vertices. This proves the base case, including any isolated vertices.

Suppose that the assertion holds for all finite $k$-uniform
hyperforests with fewer than $m$ hyperedges, and let $\FF$ have $m$
hyperedges, where $m\geq 1$. We first find a hyperedge $e$ which
meets the union of all other hyperedges in at most one vertex.

Consider the incidence bipartite graph of $\FF$. One vertex class is
$V(\FF)$, the other is $E(\FF)$, and a vertex $v$ is adjacent to
a hyperedge vertex $f$ precisely when $v\in f$. A cycle of length
$2s$ in this bipartite graph gives a Berge cycle of length $s$ in
$\FF$, and conversely. Hence the incidence graph is a forest.
Choose a component containing a hyperedge vertex and root it at one
such vertex. Among the hyperedge vertices in that component, choose
$e$ at maximum distance from the root. At most one vertex incident
with $e$ lies on the path from $e$ towards the root. If any other
vertex incident with $e$ belonged to a second hyperedge $f$, the
unique path from the root to $f$ would pass through $e$ and this
vertex. The distance to $f$ would then be two greater than the
distance to $e$, a contradiction. This proves the required property
of $e$.

Let $\lambda:B\to[k]$ be a valid partial coloring, with $|B|\leq 2$.
We show that it extends to all of $\FF$.

First suppose that $e$ is disjoint from every other hyperedge. Delete
$e$ and all its vertices, obtaining a hyperforest $\FF'$ with
$m-1$ hyperedges. The restriction of $\lambda$ to $V(\FF')$ is
valid, so the induction hypothesis extends it to a strong coloring of
$\FF'$. At most two vertices of $e$ were precolored. If two were
precolored, validity of $\lambda$ says that their colors are different,
since they lie in the same hyperedge. Give the uncolored vertices of
$e$ the unused colors, each once. This uses all $k$ colors on $e$
and does not affect any other hyperedge. It also covers the case in
which $e$ is the only hyperedge in its component.

Now suppose that $e$ meets the union of the other hyperedges in
exactly one vertex $r$. Write
\[
 e=\{r\}\cup P,
 \qquad |P|=k-1.
\]
Every vertex of $P$ belongs only to $e$. Delete $e$ and the vertices
of $P$, but retain $r$. The resulting hyperforest $\FF'$ has
$m-1$ hyperedges. We distinguish the following cases according to the
precolored vertices in $P$.

\smallskip
\noindent\emph{Case 1: $B\cap P=\varnothing$.}
All precolored vertices belong to $V(\FF')$. Apply the induction
hypothesis to extend $\lambda$ to a strong coloring of $\FF'$.
Let $c_r$ be the color of $r$ in this extension. Give the $k-1$
vertices of $P$ the $k-1$ colors in $[k]\setminus\{c_r\}$,
each once. The extended coloring is strong on $e$, as well as on
all the hyperedges of $\FF'$.

\smallskip
\noindent\emph{Case 2: $B\cap P=\{p\}$.}
Put $c_p=\lambda(p)$. If $r\in B$, then $\lambda(r)\neq c_p$
by validity. The restriction of $\lambda$ to $V(\FF')$ is valid,
so extend it by induction. In this extension $r$ retains its prescribed
color. Assign the vertices of $P\setminus\{p\}$ the $k-2$
colors different from $c_p$ and $\lambda(r)$, each once.

Suppose instead that $r\notin B$. There is at most one other
precolored vertex, say $z$, and it lies in $V(\FF')$. Choose a
color $c_r$ for $r$ subject to
\[
 c_r\neq c_p,
\]
and, when $z$ exists and $\{r,z\}$ is a subhyperedge of $\FF'$,
also require
\[
 c_r\neq\lambda(z).
\]
At most two colors are excluded. Since $k\geq 3$, such a color
$c_r$ exists. The restriction of $\lambda$ to $V(\FF')$,
together with $r\mapsto c_r$, is a partial coloring on at most two
vertices. It is valid: its only possible two-vertex restriction is on
$\{r,z\}$, and we imposed different colors whenever this pair is a
subhyperedge. Apply the induction hypothesis to this partial coloring.
Finally, assign to $P\setminus\{p\}$ the colors in
$[k]\setminus\{c_r,c_p\}$, each once. This completes $e$ without
changing either prescribed color.

\smallskip
\noindent\emph{Case 3: $|B\cap P|=2$.}
Write $B=\{p_1,p_2\}$. Both vertices lie in $e$, so
$\lambda(p_1)\neq\lambda(p_2)$. No vertex outside $P$ is precolored.
Choose for $r$ a color $c_r$ different from these two colors; again
this is possible because $k\geq 3$. By induction, the one-vertex
partial coloring $r\mapsto c_r$ extends to a strong coloring of
$\FF'$. There remain $k-3$ uncolored vertices in
$P\setminus\{p_1,p_2\}$ and exactly $k-3$ unused colors on $e$.
Assign those colors bijectively to those vertices. For $k=3$, there
are no remaining vertices and this final assignment is empty.

\smallskip
All possible values of $|B\cap P|$ have been covered. In each case
we obtain a strong $k$-coloring of $\FF$ extending $\lambda$.
This completes the induction.
\end{proof}

Taking the empty partial coloring gives a strong $k$-coloring of
every finite $k$-uniform hyperforest. If two vertices form no
subhyperedge, assigning them the same color gives a valid partial
coloring, which also extends to a strong $k$-coloring.

\begin{lemma}\label{lem:clique}
Let $\FF$ be a finite hyperforest such that
\[
 V(\FF)=\bigcup_{e\in E(\FF)}e.
\]
Let $B$ be a nonempty subset of $V(\FF)$. If every pair of distinct
vertices of $B$ is a subhyperedge of $\FF$, then $B$ is itself a
subhyperedge of $\FF$.
\end{lemma}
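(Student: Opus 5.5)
We argue by induction on $|B|$, or equivalently by a Helly-type argument for hyperforests. If $|B|=1$, the hypothesis that every vertex lies in some hyperedge gives the claim. If $|B|=2$, the claim is exactly the hypothesis. So assume $|B|\geq 3$.

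Fix two distinct vertices $u,v\in B$. Let $e$ be a hyperedge containing both. This hyperedge is unique, since distinct hyperedges of a hyperforest meet in at most one vertex. We show that every $w\in B\setminus\{u,v\}$ lies in $e$.

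Suppose some $w\in B$ is not in $e$. Choose hyperedges $f\ni u,w$ and $g\ni v,w$. Then $f\neq e$, because $w\notin e$, and likewise $g\neq e$.

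\emph{Case $f=g$.} Then $f$ contains both $u$ and $v$, so $e$ and $f$ are distinct hyperedges sharing two vertices. This is a Berge cycle of length two, a contradiction.

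\emph{Case $f\neq g$.} Then $u,e,v,g,w,f,u$ is a Berge cycle of length three. The vertices $u,v,w$ are distinct, and the hyperedges $e,f,g$ are pairwise distinct. The required inclusions $\{u,v\}\subseteq e$, $\{v,w\}\subseteq g$ and $\{w,u\}\subseteq f$ hold. This contradicts $\FF$ being a hyperforest.

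Hence $B\subseteq e$, so $B$ is a subhyperedge. Formally, no induction is needed: the argument directly uses one fixed pair $u,v$ and tests each remaining vertex against it.

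The only point requiring care is checking that the three hyperedges are pairwise distinct, so that the sequence really is a Berge cycle in the sense defined. The case $f=g$ is what forces the separate length-two argument. The covering hypothesis $V(\FF)=\bigcup e$ is used only in the case $|B|=1$.
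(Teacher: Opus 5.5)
Your proof is correct and is essentially the paper's argument: fix a pair in $B$ with a hyperedge $e$ containing it, and rule out any further vertex of $B$ outside $e$ via a Berge $2$-cycle when $f=g$ and a Berge $3$-cycle when $f\neq g$. As in the paper, the covering hypothesis $V(\FF)=\bigcup_{e\in E(\FF)}e$ is needed only for $|B|=1$, and the opening mention of induction is superfluous, as you note yourself.
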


\begin{proof}
If $|B|=1$, the sole vertex of $B$ belongs to a hyperedge by the
assumption on $V(\FF)$, and the conclusion follows.

Suppose that $|B|\geq 2$. Choose distinct vertices $x,y\in B$ and
a hyperedge $e$ containing $\{x,y\}$. We claim that every vertex
of $B$ belongs to this same $e$. Let $z\in B\setminus\{x,y\}$.
By the pairwise assumption there are hyperedges $f,g$ such that
\[
 \{x,z\}\subseteq f,
 \qquad
 \{y,z\}\subseteq g.
\]
Suppose that $z\notin e$. Then $f\neq e$ and $g\neq e$.
If $f=g$, the two distinct hyperedges $e$ and $f$ both contain
$x,y$, so
\[
 x,e,y,f,x
\]
is a Berge cycle of length two. If $f\neq g$, then $e,g,f$ are
three distinct hyperedges, and
\[
 x,e,y,g,z,f,x
\]
is a Berge cycle of length three. Both possibilities contradict that
$\FF$ is a hyperforest. Therefore $z\in e$.

Since every $z\in B\setminus\{x,y\}$ lies in $e$, and $x,y$
already lie in $e$, we have $B\subseteq e$, as required.
\end{proof}

\subsection{Hypergraph terms and their values}
\label{sec:values}

Let $\HH=(V,E)$ be a finite $k$-uniform hypergraph with
$E\neq\varnothing$, and let $\{x_v\mid v\in V\}$ be variables
in one-to-one correspondence with its vertices. We use the hypergraph
term and the vertex word from \cite[Section~3]{Gao}:
\begin{equation}\label{eq:hypergraphterms}
 \bt_{\HH}
 =\sum_{\{v_1,\ldots,v_k\}\in E}x_{v_1}\cdots x_{v_k},
 \qquad
 \bq_{\HH}=\prod_{v\in V}x_v.
\end{equation}
In the definition of $\bt_{\HH}$, every ordering of the vertices of
a hyperedge is included, as in \cite{Gao}. Thus a hyperedge contributes
$k!$ words before multiplicative commutativity is imposed. In the
word $\bq_{\HH}$, choose any one fixed order of the vertices.

We shall use these same terms in commutative ai-semirings. In
$\Pf(X_c^+)$ the $k!$ words coming from one hyperedge are equal,
and additive idempotence deletes the repetitions. Hence the
commutative normal form of $\bt_{\HH}$ is
\begin{equation}\label{eq:commutativeterm}
 \bt_{\HH}=\sum_{e\in E}\prod_{v\in e}x_v.
\end{equation}
Different hyperedges have different contents, so this normal form has
exactly $|E|$ distinct words. Passing from \eqref{eq:hypergraphterms}
to \eqref{eq:commutativeterm} does not change the value of the term
in any commutative ai-semiring.

Via the correspondence $x_v\leftrightarrow v$, we regard the
content of a word in the variables $x_v$ as a subset of $V$.
Thus $c(\bw)$ is a subhyperedge precisely when all variables of
$\bw$ are vertex variables whose corresponding vertices belong to
a common hyperedge.

\begin{lemma}\label{lem:hypergraphvalue}
Let $k\geq 3$, and let $\HH$ be a
finite $k$-uniform hypergraph with at least one hyperedge and no
isolated vertices. The following conditions are equivalent:
\begin{enumerate}
\item[(i)] There is an assignment $\alpha$ into $\Slin{k}$ such that
$\alpha(\bt_{\HH})\neq 0$.
\item[(ii)] The hypergraph $\HH$ has a strong $k$-coloring.
\end{enumerate}
If $\HH$ is not strongly $k$-colorable, then
\begin{equation}\label{eq:hypergraphinequality}
 \bq_{\HH}\preceq_{\Slin{k}}\bt_{\HH}.
\end{equation}
\end{lemma}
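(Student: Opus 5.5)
To prove Lemma~\ref{lem:hypergraphvalue}, I would establish the two implications separately, using Lemma~\ref{lem:values} throughout, and then deduce \eqref{eq:hypergraphinequality} from the equivalence.

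For (ii)$\Rightarrow$(i), let $\kappa$ be a strong $k$-coloring of $\HH$. I would define $\alpha(x_v)=a_{\kappa(v)}$ for each vertex $v$, and send any other variable to $a_1$. For a hyperedge $e$, the $k$ vertices receive the $k$ distinct letters, so $\prod_{v\in e}\alpha(x_v)=a_1\cdots a_k\neq0$ by Lemma~\ref{lem:values}(ii). Every word of \eqref{eq:commutativeterm} is therefore nonzero, and Lemma~\ref{lem:values}(i) shows that $\alpha(\bt_{\HH})\neq0$. (In fact the value is $a_1\cdots a_k$.)

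For (i)$\Rightarrow$(ii), suppose $\alpha(\bt_{\HH})\neq0$. By Lemma~\ref{lem:values}(i), every hyperedge word $\prod_{v\in e}x_v$ has nonzero value. It is a product of $k$ factors, so Lemma~\ref{lem:values}(iii) gives $\alpha(x_v)=a_{\pi_e(v)}$ for each $v\in e$, with the values pairwise distinct single letters. Since $\HH$ has no isolated vertices, every $v$ lies in some hyperedge. Hence $\alpha(x_v)$ is a single letter $a_{\kappa(v)}$, and this letter is well defined because it depends only on $v$ and not on the chosen hyperedge. The resulting map $\kappa:V\to[k]$ is injective on each hyperedge, so it is a strong $k$-coloring. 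The only point needing care here is this well-definedness; it is immediate because $\alpha(x_v)$ is a fixed element.

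For \eqref{eq:hypergraphinequality}, assume $\HH$ is not strongly $k$-colorable. By the equivalence just proved, $\alpha(\bt_{\HH})=0$ for every assignment $\alpha$. Since $0$ is the greatest element of the natural order, $\alpha(\bq_{\HH})\leq0=\alpha(\bt_{\HH})$ for all $\alpha$, which is exactly $\bq_{\HH}\preceq_{\Slin{k}}\bt_{\HH}$.

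I expect no real obstacle in this argument. The substantive input is Lemma~\ref{lem:values}(iii), which forces single letters. The hypothesis $k\geq3$ is not needed for this argument itself; it matters only later, when the lemma is used together with Lemma~\ref{lem:robust}.
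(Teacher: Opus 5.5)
Your proposal is correct and follows the paper's proof essentially step for step. The route is the same throughout: the coloring-induced assignment $x_v\mapsto a_{\kappa(v)}$ for (ii)$\Rightarrow$(i), Lemma~\ref{lem:values}(i),(iii) plus the absence of isolated vertices for (i)$\Rightarrow$(ii), and the constant zero value of $\bt_{\HH}$ for the inequality. Your side remark that $k\geq 3$ plays no role in this particular argument is also accurate.
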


\begin{proof}
Suppose first that $\kappa:V(\HH)\to[k]$ is a strong
$k$-coloring. Define
\[
 \alpha(x_v)=a_{\kappa(v)}\qquad(v\in V(\HH)).
\]
For a hyperedge $e=\{v_1,\ldots,v_k\}$, the values
$\kappa(v_1),\ldots,\kappa(v_k)$ are all different. Since there are
$k$ vertices and $k$ colors, these values are precisely the elements
of $[k]$. Thus
\[
 \alpha(x_{v_1}\cdots x_{v_k})
 =a_{\kappa(v_1)}\cdots a_{\kappa(v_k)}
 =a_1\cdots a_k\neq 0.
\]
Every word of $\bt_{\HH}$ has this same value. Since $E$ is
nonempty, additive idempotence gives
$\alpha(\bt_{\HH})=a_1\cdots a_k\neq 0$. This proves (i).

Conversely, suppose that $\alpha(\bt_{\HH})\neq 0$. By
Lemma~\ref{lem:values}(i), every word associated with a hyperedge has
nonzero value. If $e=\{v_1,\ldots,v_k\}$, then
\[
 \alpha(x_{v_1})\cdots\alpha(x_{v_k})\neq 0.
\]
Lemma~\ref{lem:values}(iii) says that the $k$ factors in this product
are exactly the single letters $a_1,\ldots,a_k$, in some order.
Every vertex belongs to a hyperedge, since there are no isolated
vertices. It follows that for each $v\in V(\HH)$ there is a unique
$i\in[k]$ with $\alpha(x_v)=a_i$. Define $\kappa(v)=i$.
The values of this mapping are different on every hyperedge, so
$\kappa$ is a strong $k$-coloring. This proves (ii).

Finally, if $\HH$ has no strong $k$-coloring, the equivalence shows
that $\alpha(\bt_{\HH})=0$ for every assignment $\alpha$ into
$\Slin{k}$. Because $0$ is absorbing for addition,
\[
 \alpha(\bt_{\HH}+\bq_{\HH})
 =0+\alpha(\bq_{\HH})
 =0
 =\alpha(\bt_{\HH}).
\]
This is the identity abbreviated by
$\bq_{\HH}\preceq_{\Slin{k}}\bt_{\HH}$.
\end{proof}

We next record the restrictions imposed by a valid inequality when
the contents of its right-hand words form a strongly colorable
hypergraph.

\begin{lemma}\label{lem:localvalidity}
Let $k\geq 3$, and suppose that
$\bq\preceq_{\Slin{k}}\bu$, where $\bq$ is a word and
\[
 \bu=\bu_1+\cdots+\bu_m.
\]
Assume that each $\bu_i$ is linear and $\ell(\bu_i)\leq k$.
Define the hypergraph $\KK_{\bu}$ by
\[
 V(\KK_{\bu})=c(\bu),
 \qquad
 E(\KK_{\bu})=\{c(\bu_i)\mid 1\leq i\leq m\}.
\]
If $\KK_{\bu}$ has a strong $k$-coloring, then:
\begin{enumerate}
\item[(i)] $c(\bq)\subseteq c(\bu)$;
\item[(ii)] $\bq$ is linear;
\item[(iii)] $\ell(\bq)\leq k$;
\item[(iv)] every strong $k$-coloring of $\KK_{\bu}$ is injective
on $c(\bq)$.
\end{enumerate}
\end{lemma}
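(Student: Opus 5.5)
The common device is the following. Every strong $k$-coloring $\kappa$ of $\KK_{\bu}$ gives an assignment $\alpha_\kappa(x)=a_{\kappa(x)}$ for $x\in c(\bu)$, exactly as in the proof of Lemma~\ref{lem:hypergraphvalue}. Each $\bu_i$ is linear with $\ell(\bu_i)\leq k$, and $\kappa$ is injective on $c(\bu_i)$. Hence $\alpha_\kappa(\bu_i)$ is a product of distinct letters, so it is nonzero by Lemma~\ref{lem:values}(ii). Lemma~\ref{lem:values}(i) then gives $\alpha_\kappa(\bu)\neq0$. By the inequality and Lemma~\ref{lem:values}(iv), $\alpha_\kappa(\bq)\neq0$ as well. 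Variables outside $c(\bu)$ are free; we give them values only as each item requires.

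For (i), we argue by contradiction. Suppose $z\in c(\bq)\setminus c(\bu)$. Fix a strong $k$-coloring $\kappa$, which exists by hypothesis, and extend $\alpha_\kappa$ by $\alpha(z)=0$. The right side keeps its nonzero value, since $z$ does not occur in $\bu$. The left side becomes $0$, contradicting Lemma~\ref{lem:values}(iv).

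For (ii)--(iv), use (i), so that every variable of $\bq$ receives a single letter $a_{\kappa(x)}$. Lemma~\ref{lem:values}(ii) says that $\alpha_\kappa(\bq)\neq0$ forces the factors of $\bq$, counted with multiplicity, to have pairwise disjoint contents. A repeated variable $x$ of $\bq$ would give the factor $a_{\kappa(x)}$ twice, so $\bq$ is linear; this is (ii). Two distinct variables $x,y\in c(\bq)$ with $\kappa(x)=\kappa(y)$ would give the same letter twice, so $\kappa$ is injective on $c(\bq)$; this is (iv). Then $c(\bq)$ injects into $[k]$, so $\ell(\bq)=|c(\bq)|\leq k$, which is (iii). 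Since $\kappa$ was arbitrary, (iv) holds for every strong coloring.

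The argument needs no real obstacle beyond setting up the assignment. The only point to check is that $\ell(\bu_i)\leq k$ and linearity make $\alpha_\kappa(\bu_i)$ nonzero even when $\ell(\bu_i)<k$: distinct colors give distinct letters, so the product is a square-free subword of $a_1\cdots a_k$.
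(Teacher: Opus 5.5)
Your proof is correct and matches the paper's argument: the assignment $x\mapsto a_{\kappa(x)}$ from a strong coloring makes $\bu$ nonzero, and then a zero value gives (i) while the repeated-letter contradiction via Lemma~\ref{lem:values} gives (ii)--(iv). The only difference is cosmetic: you deduce (iii) from the injectivity in (iv), whereas the paper proves (iii) directly by the same pigeonhole observation applied to one fixed coloring.
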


\begin{proof}
Choose a strong $k$-coloring $\lambda:c(\bu)\to[k]$. Define an
assignment on these variables by
\[
 \alpha(x)=a_{\lambda(x)}\qquad(x\in c(\bu)).
\]
Each $\bu_i$ is linear, and $\lambda$ is injective on
$c(\bu_i)$. Thus the factors in $\alpha(\bu_i)$ are distinct
single letters, and $\alpha(\bu_i)\neq 0$ by
\eqref{eq:multiplication}. Lemma~\ref{lem:values}(i) now gives
\begin{equation}\label{eq:semantic-upper}
 \alpha(\bu)\neq 0.
\end{equation}

(i) Suppose that a variable $z$ belongs to $c(\bq)$ but not to
$c(\bu)$. Extend $\alpha$ by setting $\alpha(z)=0$, and assign
arbitrary values to any other variables outside $c(\bu)$. This
extension leaves \eqref{eq:semantic-upper} unchanged. Since $z$
occurs in $\bq$, its zero value forces $\alpha(\bq)=0$. On the
other hand, $\bq\preceq_{\Slin{k}}\bu$ implies
\[
 \alpha(\bq)\leq\alpha(\bu).
\]
By Lemma~\ref{lem:values}(iv), the nonzero right-hand side forces
the left-hand side to be nonzero, a contradiction. Hence every variable
of $\bq$ lies in $c(\bu)$.

(ii) We can now evaluate $\bq$ using only the single-letter values
already assigned on $c(\bu)$. If $\bq$ were not linear, some
variable $x$ would occur at least twice. The letter
$a_{\lambda(x)}$ would then occur at least twice in
$\alpha(\bq)$, so $\alpha(\bq)=0$. This contradicts
\eqref{eq:semantic-upper}, the inequality, and
Lemma~\ref{lem:values}(iv). Thus $\bq$ is linear.

(iii) Suppose that $\ell(\bq)>k$. By (ii), the word $\bq$ then
has more than $k$ distinct variables. The map $\lambda$ has only
$k$ possible colors, so it assigns the same color to two of these
variables. The corresponding letter is repeated in
$\alpha(\bq)$, and again $\alpha(\bq)=0$, giving the same
contradiction. Therefore $\ell(\bq)\leq k$.

(iv) Let $\lambda$ now be any strong $k$-coloring of $\KK_{\bu}$,
and define $\alpha$ from it as above. The argument leading to
\eqref{eq:semantic-upper} applies to every such coloring. If two
variables in $c(\bq)$ had the same color under this $\lambda$,
then $\alpha(\bq)=0$, whereas $\alpha(\bu)\neq 0$. This is
impossible because $\bq\preceq_{\Slin{k}}\bu$. Hence every strong
$k$-coloring is injective on $c(\bq)$.
\end{proof}

\subsection{Preservation of the subhyperedge property}
\label{sec:preservation}

Fix $k\geq 3$ and a finite $k$-uniform hypergraph $\HH$. For a
normalized term $\bt$, consider the property
\[
\begin{split}
 (P_{\HH})\qquad
 &\text{every word $\bw\in\bt$ is linear, and}\\
 &\text{$c(\bw)$ is a subhyperedge of $\HH$.}
\end{split}
\]
Here the contents are viewed as vertex sets under the correspondence
of Subsection~\ref{sec:values}. In particular, all variables of $\bt$
are among $x_v$, $v\in V(\HH)$.

\begin{lemma}[Local preservation]\label{lem:preservation}
Let $k\geq 3$. Let $\Delta$ be a
family of inequalities $\bq\preceq\bu$, each of which holds in $\Slin{k}$,
where $\bq$ is a word. Suppose that $N\geq 1$ and that
$|\bu|\leq N$ for every inequality in $\Delta$. Let $\HH$ be a
finite $k$-uniform hypergraph with
\[
 g(\HH)>N.
\]
If a term $\bt$ satisfies $(P_{\HH})$ and $\bt'$ is obtained from
$\bt$ by one elementary application of an identity in $\Delta$,
then $\bt'$ also satisfies $(P_{\HH})$.
\end{lemma}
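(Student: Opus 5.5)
The plan is to treat the two directions of an elementary step \eqref{eq:step} separately. A right-to-left step replaces the set of words of $\bt$ by a subset. Property $(P_{\HH})$ is inherited by subsets, so nothing needs to be proved there.

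For a left-to-right step we have $\bt=\bp\varphi(\bu)+\bd$, and $\bt'$ also contains the words of $\bp\varphi(\bq)$. Since $\bt$ satisfies $(P_{\HH})$, every word of $\bp\varphi(\bu)$ is linear with subhyperedge content. Fix a word $\bw$ of $\bp\varphi(\bq)$, and write $\bw=\bp_0\prod_{x}\prod_j \bs_{x,j}$, where $\bp_0\in\bp$ (or $\bp_0=1$) and $\bs_{x,j}\in\varphi(x)$ is the summand chosen at the $j$-th occurrence of $x$ in $\bq$. We must show that $\bw$ is linear and that $c(\bw)$ is a subhyperedge.

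First we localize. Define a substitution $\psi$ by letting $\psi(x)$ be the sum of the chosen $\bs_{x,j}$ for $x\in c(\bq)$, and one fixed summand of $\varphi(y)$ for every other variable $y$. Then $\bw\in\bp_0\psi(\bq)$ and $\bp_0\psi(\bu)\subseteq\bp\varphi(\bu)$, and the inequality $\bp_0\psi(\bq)\preceq\bp_0\psi(\bu)$ still holds in $\Slin{k}$. Every word of $\bp_0\psi(\bu)$ is linear and lies in a hyperedge of $\HH$.

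Next, for each of the at most $N$ words $\bu_i$ of $\bu$, we pick a hyperedge $e_i$ of $\HH$ containing the contents of the words of $\bp_0\psi(\bu_i)$. Let $\FF$ be the subhypergraph formed by these hyperedges. Because $g(\HH)>N$ and $\FF$ has at most $N$ hyperedges, any Berge cycle of $\FF$ would be a Berge cycle of $\HH$ of length at most $N$. Hence $\FF$ is a $k$-uniform hyperforest.

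This is the step I expect to be the main obstacle. A single $\bu_i$ may expand into several words when some $\psi(x)$ has several summands. One must show that these words all fit into one hyperedge, or else count the hyperedges actually used and keep that count at most $N$. I would first try to prove that all words of $\bp_0\psi(\bu_i)$ share a large common part, namely $\bp_0$ and the fixed summands. Two distinct hyperedges meeting in two vertices already give a Berge $2$-cycle, and I would combine this with linearity of the products to force a single hyperedge. If that fails, I would instead take $\psi(x)$ to be a single summand and handle repeated occurrences of a variable separately.

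Then we use the coloring arguments. Any strong $k$-coloring $\kappa$ of $\FF$ gives the assignment $x_v\mapsto a_{\kappa(v)}$. Under it every word of $\bp_0\psi(\bu)$ has content inside a hyperedge of $\FF$ and is linear, so by Lemma~\ref{lem:values} these words, and hence $\bp_0\psi(\bu)$, have nonzero value. The validity of the inequality, the fact that $\bw\in\bp_0\psi(\bq)$, and Lemma~\ref{lem:values}(i),(iv) then force $\bw$ to be nonzero.

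The value of $\bw$ gives three conclusions. First, suppose a variable of $\bw$ lies outside $V(\FF)$. Assigning it $0$ leaves the upper value nonzero, which is impossible, so $c(\bw)\subseteq V(\FF)$. Second, $\bw$ is linear, since a repeated variable would produce a repeated letter. Third, take two distinct vertices of $c(\bw)$ that do not form a subhyperedge of $\FF$. Lemma~\ref{lem:robust} gives a strong coloring giving them the same color, which makes $\bw$ zero, a contradiction. So every pair in $c(\bw)$ is a subhyperedge of $\FF$.

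Finally, Lemma~\ref{lem:clique}, applied to $\FF$ with $B=c(\bw)$, shows that $c(\bw)$ is a subhyperedge of $\FF$, and hence of $\HH$. This establishes $(P_{\HH})$ for $\bt'$.
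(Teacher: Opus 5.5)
Your proposal has a genuine gap at exactly the step you flag, and your first suggested repair does not close it. Suppose some variable $x$ occurs twice in $\bq$ and the two occurrences receive different summands $\mathbf{s}_{x,1}\neq\mathbf{s}_{x,2}$. Then, for a word $\bu_i$ containing $x$, the term $\bp_0\psi(\bu_i)$ contains words that differ in this factor. The part they share is $\bp_0$ together with the contributions of the other variables of $\bu_i$, and it may have at most one vertex, for instance when $\bp_0=1$ and $\bu_i=x$. Nothing in your argument then prevents the two words from lying in two hyperedges that meet in at most one vertex, and no girth hypothesis excludes that. For $N=1$, the hypothesis $g(\HH)>N$ excludes nothing at all. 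So $\FF$ may have more than $N$ hyperedges, and you lose the hyperforest property on which Lemma~\ref{lem:robust} and Lemma~\ref{lem:clique} depend. What actually rules this situation out is that $\bq$ must be linear, and your proposal never proves it. ``Handle repeated occurrences separately'' is precisely the missing argument. Linearity of $\bq$ is forced by the lemma itself: using the same summand at two occurrences of a repeated variable already gives a nonlinear word of $\bp\varphi(\bq)$.

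The paper establishes this before building the hyperforest for a fixed new word. It chooses one summand $\bv_x^{(0)}\in\varphi(x)$ for each $x\in c(\bu)$. Since $\bp_0\prod\bv_x^{(0)}$ lies in $\bt$, every word of $\bu$ is linear, and these words give a preliminary hyperforest $\FF_0$ with at most $N$ hyperedges. A strong coloring of $\FF_0$, pulled back through representative vertices $v_x$, strongly colors $\KK_{\bu}$. Lemma~\ref{lem:localvalidity} then makes $\bq$ linear with $c(\bq)\subseteq c(\bu)$.

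Within your own framework there is a shorter repair. First run your argument on a word of $\bp\varphi(\bq)$ that uses one and the same summand at all occurrences of each variable. Then $\psi$ is single-valued, each $\bp_0\psi(\bu_i)$ is a single word, and $|E(\FF)|\leq N$. Your conclusions that this word has content in $V(\FF)$ and is linear then force $\bq$ to be linear, since every summand is nonempty. After that, every word of $\bp\varphi(\bq)$ selects one summand per variable, and the rest of your proposal goes through as written. It then coincides with the paper's Steps 4--6, except that you get $c(\bw)\subseteq V(\FF)$ from a zero assignment rather than from $c(\bq)\subseteq c(\bu)$.
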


\begin{proof}
Choose the inequality $\bq\preceq\bu\in\Delta$ used in this
step. By Lemma~\ref{lem:deduction}, the step is of the form
\eqref{eq:step}. In the right-to-left direction, the words of the
resulting term form a subset of the words of the initial term.
Linearity and containment of each content in a hyperedge are therefore
preserved. We need only consider a left-to-right step. Write it as
\begin{equation}\label{eq:forward}
 \begin{aligned}
  \bt&=\bp\varphi(\bu)+\bd,\\
  \bt'&=\bp\varphi(\bu)+\bp\varphi(\bq)+\bd.
 \end{aligned}
\end{equation}
The words already in $\bt$ satisfy the required property. It remains
to prove it for every word of $\bp\varphi(\bq)$. We prove this in six steps.

\Needspace{8\baselineskip}
\smallskip
\noindent\emph{Step 1. Linearity and length of the source words.}

If $\bp$ is a term, choose a word $\bp_0\in\bp$. If $\bp=1$,
put $\bp_0=1$. For each $x\in c(\bu)$, choose one word
$\bv_x^{(0)}\in\varphi(x)$. Each $\bv_x^{(0)}$ is nonempty, because a
substitution replaces variables by nonempty terms whose words belong
to $X_c^+$.

Take a word $\bw\in\bu$ and write
\[
 \bw=x_1^{m_1}\cdots x_s^{m_s},
 \qquad m_i\geq 1,
\]
with $x_1,\ldots,x_s$ distinct. When $\varphi(\bw)$ is expanded,
we may choose $\bv_{x_i}^{(0)}$ for every one of the $m_i$ occurrences
of $x_i$. Therefore the word
\begin{equation}\label{eq:initialword}
 \bp_0(\bv_{x_1}^{(0)})^{m_1}\cdots(\bv_{x_s}^{(0)})^{m_s}
\end{equation}
belongs to $\bp\varphi(\bw)$, and hence to $\bt$.

By $(P_{\HH})$, the word \eqref{eq:initialword} is linear. If
$m_i\geq 2$ for some $i$, choose a variable occurring in the nonempty
word $\bv_{x_i}^{(0)}$. It would occur at least twice in
$(\bv_{x_i}^{(0)})^{m_i}$, and hence at least twice in
\eqref{eq:initialword}. This is impossible. Thus $m_i=1$ for all
$i$, so $\bw$ is linear.

Every factor $\bv_{x_i}^{(0)}$ has length at least one. Consequently,
\[
 \ell\bigl(\bp_0\bv_{x_1}^{(0)}\cdots\bv_{x_s}^{(0)}\bigr)
 =\ell(\bp_0)+\sum_{i=1}^s\ell(\bv_{x_i}^{(0)})
 \geq s=\ell(\bw).
\]
The word on the left is linear and its content is contained in a
hyperedge of $\HH$, which has $k$ vertices. Its length is therefore
at most $k$. It follows that $\ell(\bw)\leq k$. Since
$\bw\in\bu$ was arbitrary, Step~1 is proved.

\Needspace{8\baselineskip}
\smallskip
\noindent\emph{Step 2. The first local hyperforest.}

Retain $\bp_0$ and the words $\bv_x^{(0)}$. By Step~1, each word
$\bw\in\bu$ is linear, so the word
\[
 \bp_0\prod_{x\in c(\bw)}\bv_x^{(0)}
\]
is one of the words of $\bt$. Choose a hyperedge
$e_{\bw}^{(0)}\in E(\HH)$ for which
\begin{equation}\label{eq:initialhosts}
 c\left(\bp_0\prod_{x\in c(\bw)}\bv_x^{(0)}\right)
 \subseteq e_{\bw}^{(0)}.
\end{equation}
Such a hyperedge exists by $(P_{\HH})$. Define $\FF_0$ by
\[
 E(\FF_0)=\{e_{\bw}^{(0)}\mid\bw\in\bu\},
 \qquad
 V(\FF_0)=\bigcup_{\bw\in\bu}e_{\bw}^{(0)}.
\]
Some of the chosen hyperedges may coincide, so
\begin{equation}\label{eq:F0size}
 |E(\FF_0)|\leq|\bu|\leq N.
\end{equation}
All hyperedges of $\FF_0$ are hyperedges of $\HH$ and hence have
size $k$.

Suppose that $\FF_0$ had a Berge cycle of length $m$. A Berge cycle
uses $m$ distinct hyperedges, so
\[
 m\leq|E(\FF_0)|\leq N.
\]
The same alternating sequence would be a Berge cycle in $\HH$,
contradicting $g(\HH)>N$. Therefore $\FF_0$ is a $k$-uniform
hyperforest. Its vertex set is the union of its hyperedges, by
construction.

\Needspace{8\baselineskip}
\smallskip
\noindent\emph{Step 3. A strong coloring of the source hypergraph.}

Let $\KK_{\bu}$ be the hypergraph from
Lemma~\ref{lem:localvalidity}, with vertex set $c(\bu)$ and
hyperedges $c(\bw)$ for $\bw\in\bu$. Step~1 verifies the
linearity and length hypotheses of that lemma. We now construct a
strong $k$-coloring of $\KK_{\bu}$.

For each $x\in c(\bu)$, choose a variable $x_{v_x}$ occurring
in $\bv_x^{(0)}$. This is possible because $\bv_x^{(0)}$ is nonempty.
The index $v_x$ is a vertex of $\FF_0$: the variable $x$ occurs
in some word $\bw\in\bu$, and \eqref{eq:initialhosts} places
all variables of $\bv_x^{(0)}$ in $e_{\bw}^{(0)}$.

If $\bw=x_1\cdots x_s\in\bu$, then
\[
 v_{x_1},\ldots,v_{x_s}\in e_{\bw}^{(0)}.
\]
These vertices are pairwise distinct. Indeed, if $v_{x_i}=v_{x_j}$
for $i\neq j$, the same target variable would occur in both
$\bv_{x_i}^{(0)}$ and $\bv_{x_j}^{(0)}$. It would then be repeated in
$\bp_0\bv_{x_1}^{(0)}\cdots\bv_{x_s}^{(0)}$, contrary to the
linearity of this word in $\bt$.

By Lemma~\ref{lem:robust}, the hyperforest $\FF_0$ has a strong
$k$-coloring $\kappa$. Define
\[
 \lambda(x)=\kappa(v_x)\qquad(x\in c(\bu)).
\]
For a word $\bw=x_1\cdots x_s\in\bu$, the vertices
$v_{x_1},\ldots,v_{x_s}$ are distinct and lie in the single
hyperedge $e_{\bw}^{(0)}$. Since $\kappa$ is a strong coloring, these
vertices receive distinct colors. Thus $\lambda$ is injective on
$c(\bw)$ for every $\bw\in\bu$, which means that $\lambda$ is
a strong $k$-coloring of $\KK_{\bu}$.

The inequality $\bq\preceq\bu$ holds in $\Slin{k}$, so
Lemma~\ref{lem:localvalidity} applies. We obtain
\begin{equation}\label{eq:qform}
 c(\bq)\subseteq c(\bu),
 \qquad
 \bq\text{ is linear},
 \qquad
 1\leq\ell(\bq)\leq k.
\end{equation}
The lower bound follows because $\bq$ is a nonempty word.
In particular, there are distinct variables $x_1,\ldots,x_s$ such that
\[
 \bq=x_1x_2\cdots x_s,
 \qquad 1\leq s\leq k.
\]

\Needspace{8\baselineskip}
\smallskip
\noindent\emph{Step 4. The local hyperforest for a fixed new word.}

Let $\br$ be an arbitrary word of $\bp\varphi(\bq)$. Since
$\bq=x_1\cdots x_s$ is linear, the expansion giving $\br$ selects
one word $\bv_{x_i}\in\varphi(x_i)$ for each $i$. If $\bp$ is
a term, it also selects a word $\bp_1\in\bp$; if $\bp=1$, put
$\bp_1=1$. Thus
\begin{equation}\label{eq:newword}
 \br=\bp_1\bv_{x_1}\cdots\bv_{x_s}.
\end{equation}
By \eqref{eq:qform}, every $x_i$ belongs to $c(\bu)$. For each
$x\in c(\bu)\setminus c(\bq)$, choose an arbitrary word
$\bv_x\in\varphi(x)$. Together with the choices already determined
by $\br$, this fixes a word $\bv_x$ for every $x\in c(\bu)$.
Because $\bq$ is linear, the expansion of $\br$ selects exactly
one word for each variable in $c(\bq)$.

For each $\bw\in\bu$, define
\begin{equation}\label{eq:expandedword}
 \br_{\bw}=\bp_1\prod_{x\in c(\bw)}\bv_x.
\end{equation}
Step~1 shows that $\bw$ is linear. Hence \eqref{eq:expandedword}
is a word in the distributive expansion of $\bp\varphi(\bw)$.
In the finite-set interpretation of terms,
\[
 \br_{\bw}\in\bp\varphi(\bw)
 \subseteq\bp\varphi(\bu)
 \subseteq\bt.
\]
Therefore $\br_{\bw}$ is linear and there is a hyperedge
$e_{\bw}\in E(\HH)$ such that
\begin{equation}\label{eq:hosts}
 c(\br_{\bw})\subseteq e_{\bw}.
\end{equation}
Put
\[
 E(\FF)=\{e_{\bw}\mid\bw\in\bu\},
 \qquad
 V(\FF)=\bigcup_{\bw\in\bu}e_{\bw}.
\]
As in Step~2, repeated choices of a hyperedge are counted only once,
so
\begin{equation}\label{eq:Fsize}
 |E(\FF)|\leq|\bu|\leq N<g(\HH).
\end{equation}
A Berge cycle in $\FF$ would use at most $N$ hyperedges and would
also be a cycle in $\HH$. It follows that $\FF$ is a
$k$-uniform hyperforest.

The word $\bp_1$ occurs as a factor of every $\br_{\bw}$. Moreover,
for each $x\in c(\bu)$, some word $\bw\in\bu$ contains $x$,
and then $\bv_x$ is a factor of $\br_{\bw}$. Formula
\eqref{eq:hosts} consequently gives
\begin{equation}\label{eq:localvariables}
 c(\bp_1)\ \cup\!\bigcup_{x\in c(\bu)}c(\bv_x)
 \subseteq V(\FF),
\end{equation}
under the vertex-variable identification. When $\bp_1=1$, its content
is empty. Since $c(\bq)\subseteq c(\bu)$, all variables of
$\br$ also lie in $V(\FF)$.

\Needspace{8\baselineskip}
\smallskip
\noindent\emph{Step 5. Nonzero values under strong colorings.}

Take an arbitrary strong $k$-coloring
\[
 \kappa:V(\FF)\longrightarrow[k].
\]
Interpret its colors as the letters $a_1,\ldots,a_k$ by setting
\begin{equation}\label{eq:targetassignment}
 \widehat{\kappa}(x_v)=a_{\kappa(v)}
 \qquad(v\in V(\FF)).
\end{equation}
This assignment may be extended arbitrarily to the other variables of
$X$. By \eqref{eq:localvariables}, its values on $V(\FF)$ already
suffice to evaluate every word used below.

For every $\bw\in\bu$, the word $\br_{\bw}$ is linear and has
content contained in $e_{\bw}$. Since $\kappa$ is injective on
$e_{\bw}$, the variables in $\br_{\bw}$ receive pairwise distinct
single letters. Thus
\begin{equation}\label{eq:expandednonzero}
 \widehat{\kappa}(\br_{\bw})\neq 0
 \qquad(\bw\in\bu).
\end{equation}

Define an assignment on the source variables by
\begin{equation}\label{eq:sourceassignment}
 \alpha_\kappa(x)=\widehat{\kappa}(\bv_x)
 \qquad(x\in c(\bu)),
\end{equation}
and extend it arbitrarily to $X$. Each right-hand side is an element
of $\Slin{k}$. The inclusion $c(\bq)\subseteq c(\bu)$ ensures that this
assignment specifies the values of both $\bu$ and $\bq$.

Suppose first that $\bp_1\neq 1$, and put
\[
 \beta_\kappa=\widehat{\kappa}(\bp_1).
\]
For $\bw\in\bu$, multiplicativity and
\eqref{eq:expandedword} give
\[
 \beta_\kappa\alpha_\kappa(\bw)
 =\widehat{\kappa}(\bp_1)
   \prod_{x\in c(\bw)}\widehat{\kappa}(\bv_x)
 =\widehat{\kappa}(\br_{\bw})\neq 0.
\]
Using distributivity and Lemma~\ref{lem:values}(i), we obtain
\begin{equation}\label{eq:upperwithcontext}
 \begin{aligned}
 \beta_\kappa\alpha_\kappa(\bu)
 &=\sum_{\bw\in\bu}\beta_\kappa\alpha_\kappa(\bw)\\
 &=\sum_{\bw\in\bu}\widehat{\kappa}(\br_{\bw})
 \neq 0.
 \end{aligned}
\end{equation}
The inequality $\bq\preceq_{\Slin{k}}\bu$ gives
\[
 \alpha_\kappa(\bq)\leq\alpha_\kappa(\bu).
\]
Since multiplication preserves the natural order, it follows that
\begin{equation}\label{eq:contextinequality}
 \beta_\kappa\alpha_\kappa(\bq)
 \leq\beta_\kappa\alpha_\kappa(\bu).
\end{equation}
The right-hand side is nonzero by \eqref{eq:upperwithcontext}.
Lemma~\ref{lem:values}(iv) therefore implies that the left-hand side
is nonzero. By \eqref{eq:newword} and
\eqref{eq:sourceassignment}, this left-hand side is exactly
\[
 \beta_\kappa\alpha_\kappa(\bq)
 =\widehat{\kappa}(\bp_1)
   \prod_{i=1}^s\widehat{\kappa}(\bv_{x_i})
 =\widehat{\kappa}(\br).
\]
Hence $\widehat{\kappa}(\br)\neq 0$ when $\bp_1\neq 1$.

If $\bp_1=1$, the multiplicative context is absent, and
\eqref{eq:expandedword} and \eqref{eq:sourceassignment} give
\[
 \alpha_\kappa(\bw)=\widehat{\kappa}(\br_{\bw})\neq 0
 \qquad(\bw\in\bu).
\]
Lemma~\ref{lem:values}(i) yields $\alpha_\kappa(\bu)\neq 0$.
The inequality
\[
 \alpha_\kappa(\bq)\leq\alpha_\kappa(\bu)
\]
and Lemma~\ref{lem:values}(iv) imply
$\alpha_\kappa(\bq)\neq 0$. In the present case,
\eqref{eq:newword} says
$\br=\bv_{x_1}\cdots\bv_{x_s}$, so
$\widehat{\kappa}(\br)=\alpha_\kappa(\bq)\neq 0$.

The coloring $\kappa$ was arbitrary. We have proved
\begin{equation}\label{eq:everycoloring}
 \widehat{\kappa}(\br)\neq 0
 \quad\text{for every strong $k$-coloring $\kappa$ of $\FF$}.
\end{equation}

\Needspace{8\baselineskip}
\smallskip
\noindent\emph{Step 6. Linearity and containment in a hyperedge.}

By Lemma~\ref{lem:robust}, the hyperforest $\FF$ has at least one
strong $k$-coloring. If a variable $x_v$ occurred twice in $\br$,
then under every assignment \eqref{eq:targetassignment} the letter
$a_{\kappa(v)}$ would occur twice in its value. Its value would
therefore be zero by \eqref{eq:multiplication}, contrary to
\eqref{eq:everycoloring}. Thus $\br$ is linear.

If $\ell(\br)>k$, then $\br$ contains more than $k$ distinct
variables. Any strong $k$-coloring of $\FF$ uses at most $k$
colors in total, so two of these variables receive the same color.
Their assigned letter is then repeated in the value of $\br$, again
contradicting \eqref{eq:everycoloring}. Hence
\begin{equation}\label{eq:Rlength}
 \ell(\br)\leq k.
\end{equation}
To prove containment in a hyperedge, suppose that two distinct vertices
$u,v\in c(\br)$ do not form a subhyperedge of $\FF$. Give both vertices color $1$. This is a
valid partial coloring on two vertices, because no hyperedge contains
both $u$ and $v$. Lemma~\ref{lem:robust} extends it to a strong
$k$-coloring $\kappa$ of $\FF$. Since both $x_u$ and $x_v$
occur in $\br$, the value $\widehat{\kappa}(\br)$ contains the
letter $a_1$ at least twice and is therefore zero. This contradicts
\eqref{eq:everycoloring}.

We conclude that every pair of distinct vertices of $c(\br)$ is a
subhyperedge of $\FF$. The word $\br$ is nonempty, because
$\bq$ and all the substituted words are nonempty. Also
$c(\br)\subseteq V(\FF)$ by \eqref{eq:localvariables}, and
$V(\FF)$ is the union of its hyperedges. Lemma~\ref{lem:clique}
therefore applies and gives a hyperedge $e\in E(\FF)$ with
\[
 c(\br)\subseteq e.
\]
Every hyperedge of $\FF$ is a hyperedge of $\HH$, so $c(\br)$
is also a subhyperedge of $\HH$.

\smallskip
The word $\br$ was an arbitrary word of $\bp\varphi(\bq)$.
We have shown that each such word is linear and has content contained
in a hyperedge of $\HH$. The other words of $\bt'$ already
belong to $\bt$ and satisfy $(P_{\HH})$ by hypothesis. Thus
\eqref{eq:forward} proves that $\bt'$ satisfies $(P_{\HH})$.
\end{proof}

Each local hyperforest is obtained by choosing one containing
hyperedge for each word of $\bu$. Its number of hyperedges is
therefore bounded by $|\bu|$. The first hyperforest yields
\eqref{eq:qform}, and the second is determined by the chosen word
of $\bp\varphi(\bq)$.

\section{Nonfinite bases for finite linear words}
\label{sec:mainproof}

The ordinary chromatic number $\chi(\HH)$ of a hypergraph is the
least number of colors in a coloring of its vertices with no
monochromatic hyperedge. For hyperedges of size at least two, every
strong $k$-coloring is an ordinary proper $k$-coloring. We use the following theorem of Erd\H{o}s and
Hajnal; see \cite{ErdosHajnal} and the formulation in
\cite{AxenovichKarrer}.

\begin{theorem}[Erd\H{o}s--Hajnal]\label{thm:EH}
For all integers $r,g,h\geq 2$, there exists a finite $r$-uniform
hypergraph $\HH$ whose Berge girth is at least $g$ and whose ordinary
chromatic number is at least $h$.
\end{theorem}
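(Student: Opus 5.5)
The plan is the classical probabilistic deletion argument of Erd\H{o}s and Hajnal: take a random $r$-uniform hypergraph that is slightly denser than the threshold for short Berge cycles, destroy every short Berge cycle by deleting one hyperedge from it, and show that so few hyperedges are deleted that every vertex set of linear size still spans a hyperedge. Large chromatic number then follows from small independence number, via $\chi(\HH)\geq |V(\HH)|/\alpha(\HH)$.

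\emph{Setup.} Fix $r,g,h\geq 2$ and put $\theta=1/(2g)$. On the vertex set $[n]$, include each $r$-subset independently with probability
\[
 p=n^{1-r+\theta}.
\]
Let $Z$ be the number of Berge cycles of length $\ell$ with $2\leq\ell<g$.

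\emph{Counting short cycles.} A Berge cycle of length $\ell$ is specified by:
\begin{itemize}
\item an ordered choice of $\ell$ distinct vertices $v_1,\ldots,v_\ell$, at most $n^\ell$ ways;
\item for each $i$, an $r$-set containing $\{v_i,v_{i+1}\}$, at most $n^{r-2}$ ways each.
\end{itemize}
The $\ell$ hyperedges are distinct, so all are present with probability $p^\ell$. Hence the expected number of such cycles is at most
\[
 (n^{r-1}p)^\ell=n^{\theta\ell}\leq n^{1/2}.
\]
Summing over $\ell<g$ gives $\mathbb E Z\leq g\,n^{1/2}$. By Markov's inequality, $Z\leq 2g\,n^{1/2}$ with probability at least $1/2$.

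\emph{Every large set spans many hyperedges.} Put $t=\lceil n/(2h)\rceil$. For a fixed $t$-set $T$, the number of hyperedges inside $T$ is binomial with mean
\[
 \binom{t}{r}p\geq c\,n^{1+\theta},
\]
where $c>0$ depends only on $r$ and $h$. By a Chernoff bound, the probability that $T$ spans fewer than half this mean is at most $\exp(-c'n^{1+\theta})$. A union bound over at most $2^n$ sets $T$ shows that, with probability tending to $1$, every $t$-set spans at least $(c/2)n^{1+\theta}$ hyperedges. For large $n$ this exceeds $2g\,n^{1/2}$.

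\emph{Deletion.} Fix $n$ large and an outcome in which both events hold. Delete one hyperedge from each Berge cycle of length less than $g$; this removes at most $Z\leq 2g\,n^{1/2}$ hyperedges. Deleting hyperedges creates no new Berge cycles, so the resulting hypergraph $\HH$ has girth at least $g$. Every $t$-set still contains a hyperedge, because it started with more than $Z$ of them. Thus $\alpha(\HH)<t$, and
\[
 \chi(\HH)\geq \frac{n}{\alpha(\HH)}>\frac{n}{t}\geq h
\]
for large $n$. Isolated vertices created by the deletion are harmless for both the girth and the chromatic bound.

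\emph{Main obstacle.} The delicate step is balancing $p$. The short-cycle count, about $n^{\theta g}$, must be much smaller than the minimum edge count of a linear-size set, about $n^{1+\theta}$. The choice $\theta=1/(2g)$ achieves exactly this. The Chernoff exponent must also beat the $2^n$ union bound, which holds because $n^{1+\theta}\gg n$. Cycles of length two are included in the count, so the final hypergraph is also linear, which is consistent with the later use of the theorem.
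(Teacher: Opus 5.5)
The paper gives no proof of this theorem; it imports it as a black box from Erd\H{o}s and Hajnal, in the formulation of Axenovich and Karrer, and uses only the existence statement. Your proposal takes a genuinely different route in that it supplies a self-contained probabilistic deletion proof, and that proof is correct.

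\begin{itemize}
\item With $p=n^{1-r+\theta}$ and $\theta=1/(2g)$, the expected number of Berge cycles of length $\ell<g$ is at most $(n^{r-1}p)^{\ell}\le n^{1/2}$. Markov's inequality then gives $Z\le 2gn^{1/2}$ with probability at least $1/2$.
\item By a Chernoff bound and a $2^n$ union bound, every $\lceil n/(2h)\rceil$-set spans $\Omega(n^{1+\theta})$ hyperedges with probability $1-o(1)$. So both events hold simultaneously for large $n$.
\item Deleting one hyperedge from each short cycle creates no new Berge cycles. It leaves $\alpha(\HH)<n/(2h)+1$, hence $\chi(\HH)\ge n/\alpha(\HH)\ge h$ once $n\ge 2h$.
\end{itemize}

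As for what each approach buys: the citation keeps the exposition short and defers to the literature. Your argument makes the paper self-contained. It also makes visible a feature the later proofs depend on: Berge cycles of length two are destroyed as well, so the final hypergraph is linear (for $g\geq 3$). Consequently any at most $N$ hyperedges of $\HH$ genuinely form a hyperforest, which is exactly what Lemma~\ref{lem:preservation} and Lemma~\ref{lem:clique} require. Since only existence is used, the nonconstructive nature of your argument costs nothing. A constructive alternative, such as Lov\'asz's, would matter only if explicit hypergraphs were wanted.
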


\begin{theorem}\label{thm:main}
For every integer $k\geq3$, the semiring $\Slin{k}$ is nonfinitely based.
\end{theorem}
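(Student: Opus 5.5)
We argue by contradiction, using the hypergraph preservation lemma. Suppose $\Slin{k}$ has a finite basis. By Lemma~\ref{lem:normalization} we may replace it, relative to the commutative ai-semiring laws, by a finite family $\Delta$ of inequalities $\bq\preceq\bu$ with $\bq$ a word, all valid in $\Slin{k}$. Let $N\geq 2$ be at least the maximum of $|\bu|$ over $\Delta$. We then exhibit an inequality valid in $\Slin{k}$ that cannot be derived from $\Delta$.

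The test inequality comes from Theorem~\ref{thm:EH}. Apply it with $r=k$, $g=N+1$ and $h=k+1$. This gives a finite $k$-uniform hypergraph $\HH$ with $g(\HH)>N$ and $\chi(\HH)>k$. Delete any isolated vertices; neither the girth nor the chromatic number changes. Every strong $k$-coloring is an ordinary proper $k$-coloring, since hyperedges have size $k\geq 3\geq 2$. Hence $\HH$ has no strong $k$-coloring, and Lemma~\ref{lem:hypergraphvalue} gives $\bq_{\HH}\preceq_{\Slin{k}}\bt_{\HH}$, that is, the identity $\bt_{\HH}\approx\bt_{\HH}+\bq_{\HH}$ holds in $\Slin{k}$.

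Suppose this identity were derivable from $\Delta$. By Lemma~\ref{lem:deduction} there is a finite chain of elementary steps \eqref{eq:step} from $\bt_{\HH}$ to $\bt_{\HH}+\bq_{\HH}$. The starting term $\bt_{\HH}$ satisfies $(P_{\HH})$: by \eqref{eq:commutativeterm} each of its words is linear, with content equal to a hyperedge. Lemma~\ref{lem:preservation} applies to every step, because $g(\HH)>N\geq|\bu|$. By induction along the chain, the final term also satisfies $(P_{\HH})$. But $\bq_{\HH}$ is a word of that term whose content is all of $V(\HH)$. This set is not contained in a single hyperedge: $\HH$ is not strongly $k$-colorable, so it must have at least two hyperedges, and since it has no isolated vertices, $|V(\HH)|>k$. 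This contradiction shows that no finite basis exists.

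Almost all of the real work has already been done in Lemma~\ref{lem:preservation}. The remaining step needing care is the bookkeeping check that the chain of Lemma~\ref{lem:deduction} really stays inside normalized terms to which $(P_{\HH})$ applies. This holds because the commutative ai-semiring laws are absorbed into $\Pf(X_c^+)$, and right-to-left steps only remove words. The same setup also handles bases that contain identities with arbitrary, non-word, left-hand sides, since those were reduced to the form $\bq\preceq\bu$ at the start.
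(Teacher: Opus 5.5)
Your proposal is correct and follows the paper's proof essentially step for step: normalize the basis to word-left-hand inequalities, take an Erd\H{o}s--Hajnal $k$-uniform hypergraph of girth $>N$ and chromatic number $>k$, use Lemma~\ref{lem:hypergraphvalue} to get $\bq_{\HH}\preceq\bt_{\HH}$, and propagate $(P_{\HH})$ along the deduction chain via Lemma~\ref{lem:preservation}. The only differences are cosmetic and both valid: you take $N\geq2$ rather than the paper's girth bound $\max\{N,3\}$, and you justify $|V(\HH)|>k$ by noting that $\HH$ must contain two distinct $k$-element hyperedges, where the paper instead argues that at most $k$ vertices could all receive distinct colors.
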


\begin{proof}
Fix $k\geq 3$. Suppose, for a contradiction, that
$\Slin{k}$ is finitely based. Since it is a commutative
ai-semiring, it has a finite basis relative to the commutative
ai-semiring laws. By Lemma~\ref{lem:normalization}, this basis may be
replaced by a finite family
\begin{equation}\label{eq:finitebasis}
 \Delta=\{\bq_i\preceq\bu_i\mid 1\leq i\leq t\},
\end{equation}
where each $\bq_i$ is a word, each $\bu_i$ is a term, and every
inequality holds in $\Slin{k}$. Define
\begin{equation}\label{eq:N}
 N=\max\bigl(\{1\}\cup\{|\bu_i|\mid 1\leq i\leq t\}\bigr).
\end{equation}
Thus every right-hand term in \eqref{eq:finitebasis} contains at
most $N$ distinct commutative words, and $N\geq 1$.

Apply Theorem~\ref{thm:EH} with
\[
 r=k,\qquad g=\max\{N,3\}+1,\qquad h=k+1.
\]
There is a finite $k$-uniform hypergraph $\HH$ such that
\begin{equation}\label{eq:highgirth}
 g(\HH)>\max\{N,3\},
 \qquad
 \chi(\HH)\geq k+1.
\end{equation}
This hypergraph has at least one hyperedge, since an edgeless
hypergraph can be colored with one color. Delete any isolated vertices.
This does not change the hyperedges and hence does not change the
Berge cycles or the girth. It does not change the chromatic number
either: a proper coloring restricts to the remaining vertices, and
any coloring of the remaining vertices extends to isolated vertices
by assigning them arbitrary colors. We may therefore assume that
$\HH$ has no isolated vertices, while retaining
\eqref{eq:highgirth}.

The hypergraph $\HH$ has no strong $k$-coloring. Indeed, any such
coloring would give $k$ distinct colors on every hyperedge, and in
particular would give an ordinary proper $k$-coloring. This would
contradict $\chi(\HH)\geq k+1$. By
Lemma~\ref{lem:hypergraphvalue}, the following inequality holds in $\Slin{k}$:
\[
 \bq_{\HH}\preceq_{\Slin{k}}\bt_{\HH}.
\]
Equivalently, $\Slin{k}$ satisfies the identity
\begin{equation}\label{eq:targetidentity}
 \bt_{\HH}\approx\bt_{\HH}+\bq_{\HH}.
\end{equation}

Because \eqref{eq:finitebasis} is assumed to be a basis, the identity
\eqref{eq:targetidentity} must follow from $\Delta$ and the
commutative ai-semiring laws. Lemma~\ref{lem:deduction} then gives a
finite chain of normalized terms
\begin{equation}\label{eq:chain}
 \bt_1,\bt_2,\ldots,\bt_m,
 \qquad
 \bt_1=\bt_{\HH},
 \qquad
 \bt_m=\bt_{\HH}+\bq_{\HH},
\end{equation}
in which each pair of successive terms is related by one elementary
application of an identity in $\Delta$, in either direction.

The first term satisfies $(P_{\HH})$. In fact, by
\eqref{eq:commutativeterm}, every word of $\bt_{\HH}$ is of the
form
\[
 \prod_{v\in e}x_v\qquad(e\in E(\HH)).
\]
Each vertex of $e$ occurs exactly once, so this word is linear,
and its content is exactly the hyperedge $e$.

Now suppose that $\bt_j$ satisfies $(P_{\HH})$ for some $j<m$.
The identity used in the step from $\bt_j$ to $\bt_{j+1}$ belongs
to $\Delta$, is valid in $\Slin{k}$, and has at most $N$ words in its
right-hand term by \eqref{eq:N}. Also $g(\HH)>N$ by
\eqref{eq:highgirth}. All hypotheses of
Lemma~\ref{lem:preservation} are satisfied, so $\bt_{j+1}$
satisfies $(P_{\HH})$. Induction along \eqref{eq:chain} gives
\begin{equation}\label{eq:chainproperty}
 \bt_j\text{ satisfies }(P_{\HH})
 \qquad(1\leq j\leq m).
\end{equation}

We finally examine the word $\bq_{\HH}$. By definition,
\[
 \bq_{\HH}=\prod_{v\in V(\HH)}x_v,
 \qquad
 c(\bq_{\HH})=V(\HH)
\]
under the vertex-variable correspondence. We have $|V(\HH)|>k$.
Otherwise, we could assign distinct colors to all vertices using at
most $k$ colors. Since every hyperedge has $k\geq 3$ vertices,
this would be an ordinary proper $k$-coloring, contrary to
\eqref{eq:highgirth}.

Every hyperedge of $\HH$ has exactly $k$ vertices. No hyperedge
can therefore contain $V(\HH)$. Hence $c(\bq_{\HH})$ is not
a subhyperedge of $\HH$. The word $\bq_{\HH}$ has length
$|V(\HH)|>k$, whereas every word in $\bt_{\HH}$ has length
$k$, so $\bq_{\HH}$ is a distinct word in the normalized sum
$\bt_{\HH}+\bq_{\HH}$. This last term fails $(P_{\HH})$,
contradicting \eqref{eq:chainproperty} for $j=m$.

Thus $\Slin{k}$ has no finite basis relative to the commutative ai-semiring
laws. By the equivalence in Section~\ref{sec:preliminaries},
$S_c^*(a_1\cdots a_k)$ is nonfinitely based for every $k\geq 3$.
\end{proof}

\begin{corollary}\label{cor:abc}
The eight-element ai-semiring $S_c^*(abc)$ is nonfinitely based.
\end{corollary}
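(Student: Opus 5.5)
This is the case $k=3$ of Theorem~\ref{thm:main}, so the plan is simply to specialize. The only thing to check is that $S_c^*(abc)$ is the semiring $\Slin{3}$. Rename the distinct letters $a,b,c$ as $a_1,a_2,a_3$. The construction $S_c^*(W)$ depends only on the set $W^{\leq}$ of commutative subwords and on the divisibility order, so renaming the letters gives an isomorphism $S_c^*(abc)\cong S_c^*(a_1a_2a_3)$. Since finite basability is an isomorphism invariant, it suffices to treat $S_c^*(a_1a_2a_3)$.

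By \eqref{eq:subwords}, $|S_c^*(a_1a_2a_3)|=2^3=8$: the zero together with the seven nonempty subsets of $\{a_1,a_2,a_3\}$ viewed as square-free words. This confirms the description ``eight-element''.

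Theorem~\ref{thm:main} with $k=3$ then says that $S_c^*(a_1a_2a_3)$ is nonfinitely based, and hence so is $S_c^*(abc)$. There is no real obstacle here. All of the substantive work was already done for $k=3$, where Lemma~\ref{lem:robust} first applies because $k\geq 3$ is needed to pick a color for the vertex $r$.
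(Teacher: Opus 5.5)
Your proposal is correct and is the paper's proof: it takes $k=3$ in Theorem~\ref{thm:main} and reads off the order $2^3=8$ from \eqref{eq:subwords}. Your explicit renaming $a,b,c\mapsto a_1,a_2,a_3$ only makes visible the identification $S_c^*(abc)\cong\Slin{3}$, which the paper leaves implicit.
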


\begin{proof}
Take $k=3$ in Theorem~\ref{thm:main}. The order of the semiring is
$2^3=8$ by \eqref{eq:subwords}.
\end{proof}

\begin{corollary}\label{cor:independence}
Let $k\geq 3$ and let
$\Sigma\subseteq\Id(S_c^*(a_1\cdots a_k))$ be finite. There is a
finite $k$-uniform hypergraph $\HH$ such that
$\bt_{\HH}\approx\bt_{\HH}+\bq_{\HH}$ holds in
$S_c^*(a_1\cdots a_k)$ and is independent of $\Sigma$ relative
to the commutative ai-semiring laws.
\end{corollary}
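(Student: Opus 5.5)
The plan is to extract this from the proof of Theorem~\ref{thm:main}, where finiteness of $\Sigma$ was the only feature of the assumed basis actually used. Fix $k\geq3$ and a finite $\Sigma\subseteq\Id(S_c^*(a_1\cdots a_k))$. The first step is to apply Lemma~\ref{lem:normalization} to every identity of $\Sigma$. This replaces $\Sigma$ by a finite family $\Delta$ of inequalities $\bq\preceq\bu$, each with $\bq$ a word and each valid in $\Slin{k}$, such that $\Delta$ and $\Sigma$ are equivalent relative to the commutative ai-semiring laws. Validity is inherited because each member of $\Delta$ is a consequence of an identity of $\Sigma$. We then define $N$ exactly as in \eqref{eq:N}.

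Next, we take $\HH$ from Theorem~\ref{thm:EH} with $r=k$, $g=\max\{N,3\}+1$ and $h=k+1$, and delete its isolated vertices exactly as in the proof of Theorem~\ref{thm:main}. As there, $\HH$ has no strong $k$-coloring. Lemma~\ref{lem:hypergraphvalue} then shows that $\bt_{\HH}\approx\bt_{\HH}+\bq_{\HH}$ holds in $\Slin{k}$. This gives the first half of the statement.

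For independence, we argue by contradiction: suppose the identity follows from $\Sigma$, hence from $\Delta$, relative to the commutative ai-semiring laws. Lemma~\ref{lem:deduction} produces a chain \eqref{eq:chain} of elementary steps. We check that $\bt_{\HH}$ satisfies $(P_{\HH})$, and Lemma~\ref{lem:preservation} carries this property along the chain, since every step uses an inequality of $\Delta$ with at most $N$ right-hand words and $g(\HH)>N$. The final term $\bt_{\HH}+\bq_{\HH}$, however, contains the word $\bq_{\HH}$, whose content $V(\HH)$ has more than $k$ vertices. So this term fails $(P_{\HH})$, which is the required contradiction.

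No step is a real obstacle, since the whole argument repeats the proof of Theorem~\ref{thm:main} with $\Sigma$ in place of the hypothetical basis. The only point needing care is the passage from $\Sigma$ to $\Delta$. That passage must preserve both derivability in each direction and validity in $\Slin{k}$, and both are given by Lemma~\ref{lem:normalization}.
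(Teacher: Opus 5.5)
Your proposal is correct and matches the paper's proof: normalize $\Sigma$ by Lemma~\ref{lem:normalization}, choose $\HH$ with girth exceeding $\max\{N,3\}$ and chromatic number exceeding $k$, obtain the identity from Lemma~\ref{lem:hypergraphvalue}, and block any derivation by propagating $(P_{\HH})$ with Lemma~\ref{lem:preservation}. The one fact you leave implicit, that $|V(\HH)|>k$ so $c(\bq_{\HH})$ is not a subhyperedge, is established exactly as in the proof of Theorem~\ref{thm:main}.
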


\begin{proof}
Normalize $\Sigma$ by Lemma~\ref{lem:normalization} and let $N\geq 1$
bound the number of words in the right-hand terms of the resulting
inequalities. Choose $\HH$ as in \eqref{eq:highgirth}.
Lemma~\ref{lem:hypergraphvalue} gives the stated identity, while
Lemma~\ref{lem:preservation} preserves $(P_{\HH})$ along every
deduction from $\Sigma$. The term $\bt_{\HH}$ satisfies this
property and $\bt_{\HH}+\bq_{\HH}$ fails it. Hence the identity
is independent of $\Sigma$.
\end{proof}

\section{Containment and finite joins}
\label{sec:finitejoins}

We first determine exactly when a variety from the linear-word family
is contained in a variety from the power family.

\begin{lemma}\label{lem:thresholdinequality}
Let $k\geq2$ and $1\leq n\leq2k-2$. Then
\begin{equation}\label{eq:thresholdinequality}
 x_1^2\preceq_{\Spow{n}}x_1x_2\cdots x_k.
\end{equation}
This inequality does not hold in $\Slin{k}$.
\end{lemma}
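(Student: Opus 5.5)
The proof splits into two parts: validity in $\Spow{n}$ for $n\leq 2k-2$, and failure in $\Slin{k}$.

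For validity, take an arbitrary assignment $\alpha$ into $\Spow{n}$. If some $\alpha(x_i)=0$, or if the product $\alpha(x_1)\cdots\alpha(x_k)$ is zero, the right-hand side is $0$, the greatest element, and there is nothing to prove. Otherwise write $\alpha(x_i)=a^{e_i}$ with $e_i\geq1$ and $e_1+\cdots+e_k\leq n$. The value of $x_1^2$ is $a^{2e_1}$, or $0$ if $2e_1>n$. Since the natural order is the chain $a<a^2<\cdots<a^n<0$, the inequality reduces to showing $2e_1\leq e_1+\cdots+e_k$, that is, $e_1\leq e_2+\cdots+e_k$. Once this holds, $2e_1\leq n$ as well, so $x_1^2$ has nonzero value $a^{2e_1}\leq a^{e_1+\cdots+e_k}$.

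The main point is the bound $e_1\leq e_2+\cdots+e_k$, and this is where $n\leq 2k-2$ enters. Suppose instead $e_1>e_2+\cdots+e_k$. Since each $e_j\geq1$ for $j\geq2$, we have $e_2+\cdots+e_k\geq k-1$, hence $e_1\geq k$ and the total exponent is at least $k+(k-1)=2k-1>n$. This contradicts the assumption that the product is nonzero. Hence $e_1\leq e_2+\cdots+e_k$, and validity follows.

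For failure in $\Slin{k}$, assign $x_i\mapsto a_i$. The right-hand side evaluates to $a_1\cdots a_k\neq0$. The left-hand side is $a_1^2$, which is not square-free and so equals $0$ by \eqref{eq:multiplication}. Since $0$ is strictly greater than every nonzero element, $0\not\leq a_1\cdots a_k$, and the inequality fails. This uses $k\geq2$ only so that the statement is meaningful; the assignment works for any $k\geq1$.
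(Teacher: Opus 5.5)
Your proposal is correct and follows essentially the same route as the paper. The paper proves $r_1\leq n-(k-1)\leq k-1\leq r_2+\cdots+r_k$ directly where you argue by contradiction, and both proofs use the same assignment $x_i\mapsto a_i$ to show failure in $\Slin{k}$.
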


\begin{proof}
Take an assignment into $\Spow{n}$. If the product on the right
is zero, the inequality holds because zero is greatest. Otherwise all
its factors are nonzero, say $x_i\mapsto a^{r_i}$, with
\[
 r_1+\cdots+r_k\leq n\leq2k-2,\qquad r_i\geq1.
\]
Since $r_2+\cdots+r_k\geq k-1$, we have
\[
 r_1\leq n-(k-1)\leq k-1\leq r_2+\cdots+r_k.
\]
It follows that $2r_1\leq r_1+\cdots+r_k\leq n$. Both sides
are nonzero, and the exponent of the left-hand side is no greater
than that of the right-hand side. This proves
\eqref{eq:thresholdinequality}.

In $\Slin{k}$, assign $a_i$ to $x_i$. The two sides then have
values $0$ and $a_1\cdots a_k$, respectively. Since the latter
is nonzero, the inequality fails.
\end{proof}

\begin{theorem}\label{thm:containment}
For all integers $n,k\geq1$,
\begin{equation}\label{eq:exactcontainment}
 \V(\Slin{k})\subseteq\V(\Spow{n})
 \quad\Longleftrightarrow\quad n\geq2k-1.
\end{equation}
Moreover,
\begin{equation}\label{eq:reversecontainment}
 \V(\Spow{n})\subseteq\V(\Slin{k})
 \quad\Longleftrightarrow\quad n=1.
\end{equation}
\end{theorem}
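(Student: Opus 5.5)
The plan is to prove the positive direction of \eqref{eq:exactcontainment} by showing that $\Slin{k}\in\mathsf{HSP}(\Spow{2k-1})$. Because the quotient maps \eqref{eq:powerquotient} make $\V(\Spow{n})$ increase with $n$, this settles every $n\geq2k-1$. For the converse I will use Lemma~\ref{lem:thresholdinequality}, and \eqref{eq:reversecontainment} I will handle by a short separate argument.

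\emph{Construction for $n=2k-1$.} Work in the direct power $\Spow{n}^k$. For each $i\in[k]$, let $g_i$ be the tuple with $a^{k}$ in coordinate $i$ and $a$ in every other coordinate. Let $T$ be the subsemiring generated by $g_1,\ldots,g_k$. The set $Z$ of tuples having some zero coordinate is a multiplicative ideal and an upper set for the coordinatewise order. As in Section~\ref{sec:semiring}, collapsing $Z\cap T$ to a single zero therefore yields a homomorphic image $\overline T$ of $T$.

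Next I compute the nonzero elements of $\overline T$. A product $g_S=\prod_{i\in S}g_i$ with $S$ a set (any repeated factor gives exponent at least $2k>n$, hence zero) has exponent $|S|+k-1$ in the coordinates $j\in S$ and $|S|$ in the coordinates $j\notin S$. The largest exponent is $2k-1$, so $g_S\notin Z$. A general nonzero element of $T$ is a sum of such products, namely the coordinatewise maximum of the $g_S$. Define $\theta(t)=\prod\{a_j: t_j\geq k\}$ and $\theta(0)=0$. For $g_S$ we have $t_j\geq k$ exactly when $j\in S$: if $j\notin S$ then $|S|\leq k-1$. It follows that $\theta(g_S)=\prod_{j\in S}a_j$, and $\theta$ turns coordinatewise maxima into unions of contents, that is, into sums in $\Slin{k}$.

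For products, I will show that $ts$ is nonzero in $\overline T$ exactly when the supports $\{j:t_j\geq k\}$ and $\{j:s_j\geq k\}$ are disjoint, and that in that case the support of $ts$ is their union. The key observation is that every coordinate of a nonzero element of $T$ is at least the size of the largest $S$ used, and that a coordinate in a common support would reach exponent at least $2k$. This verification that $\theta$ is a well-defined surjective homomorphism $\overline T\to\Slin{k}$ is the main obstacle. If it turns out to fail, the fallback is to take the quotient of $\overline T$ by the kernel of $\theta$ and check directly that this kernel is a congruence.

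\emph{Converse direction.} For $k=1$ there is nothing to prove, since $n\geq1=2k-1$. For $k\geq2$ and $n\leq2k-2$, Lemma~\ref{lem:thresholdinequality} gives an inequality valid in $\Spow{n}$ but not in $\Slin{k}$, so the containment fails.

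\emph{Proof of \eqref{eq:reversecontainment}.} For $n=1$, the set $\{a_1,0\}$ is a subsemiring of $\Slin{k}$ isomorphic to $\Spow{1}$, so the containment holds. For $n\geq2$, the inequality $y\preceq x^2$ holds in $\Slin{k}$, because squares are zero there. It fails in $\Spow{n}$ at $x=a$, $y=0$: there $x^2=a^2\neq0$, while $y=0$ is the greatest element and so $0\leq a^2$ is false.
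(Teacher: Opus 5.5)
Your proposal is correct and follows the paper's proof essentially step for step. It uses the same generators $(a,\ldots,a^k,\ldots,a)$ in $(\Spow{2k-1})^k$ and the same threshold map $t\mapsto\prod\{a_j: t_j\geq k\}$; you only factor that map through the Rees-type quotient by $Z$, which the paper folds into the definition of $h$. The converse goes through Lemma~\ref{lem:thresholdinequality} in both versions, and \eqref{eq:reversecontainment} through the subsemiring $\{0,a_1\}$ together with the fact that squares are zero in $\Slin{k}$ (you use $y\preceq x^2$ where the paper uses $x^2\approx y^2$).

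The step you flag as the main obstacle closes at once by expanding $ts=\sum_{S,R}g_Sg_R$. A pair with $S\cap R\neq\varnothing$ gives a zero coordinate; otherwise $g_Sg_R=g_{S\cup R}$, which is nonzero with support $S\cup R$. So the fallback is not needed.
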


\begin{proof}
When $n<2k-1$, necessarily $k\geq2$, and
Lemma~\ref{lem:thresholdinequality} rules out the first inclusion.
For its converse, we construct a surjective homomorphism onto
$\Slin{k}$ from a subsemiring of
\[
 \bigl(\Spow{2k-1}\bigr)^k.
\]
Let $A$ be the subsemiring generated by $\alpha_1,\ldots,\alpha_k$,
where the $j$th coordinate of $\alpha_i$ is $a^k$ if $j=i$ and
is $a$ otherwise. Thus
\begin{equation}\label{eq:thresholdgenerators}
 \alpha_i=(a,\ldots,a,a^k,a,\ldots,a),
\end{equation}
with $a^k$ in coordinate $i$.

Every element of $A$ is the value of a nonempty sum of words in these
generators. If such a word contains a repeated generator $\alpha_i$,
its $i$th coordinate has exponent at least $2k$ and therefore equals
zero. The sum containing that word also has a zero coordinate.

For a square-free word with generator set $J\subseteq[k]$ and
$r=|J|\geq1$, direct multiplication gives
\begin{equation}\label{eq:thresholdcoordinates}
 \left(\prod_{i\in J}\alpha_i\right)_j=
 \begin{cases}
 a^{k+r-1},&j\in J,\\
 a^r,&j\notin J.
 \end{cases}
\end{equation}
Here $k+r-1\leq2k-1$. If $j\notin J$, then $r\leq k-1$.
Thus the value has no zero coordinate, and its $j$th exponent is at
least $k$ exactly when $j\in J$.

A sum of such square-free words still has no zero coordinate, since
coordinatewise addition takes maxima. Its $j$th exponent is at least
$k$ exactly when generator $\alpha_j$ occurs in the content of
at least one summand. In particular, at least one exponent is at least
$k$. We may therefore define $h:A\to\Slin{k}$ by
\begin{equation}\label{eq:thresholdmap}
 h(s_1,\ldots,s_k)=
 \begin{cases}
 0,&s_i=0\text{ for some }i,\\[1mm]
 \displaystyle\prod_{\{i:r_i\geq k\}}a_i,
   &s_i=a^{r_i}\text{ for all }i.
 \end{cases}
\end{equation}
The product in the second case is nonempty by the preceding argument.

For any term in the generators, the first case of
\eqref{eq:thresholdmap} occurs exactly when that term has a nonlinear
word. Evaluating the same term at $a_1,\ldots,a_k$ in $\Slin{k}$
also gives zero in exactly this case. If all its words are linear,
\eqref{eq:thresholdcoordinates} shows that $h$ sends its value to
the square-free word given by the union of their contents. This is
again its value at $a_1,\ldots,a_k$.

It follows that $h$ preserves both operations: represent two elements
of $A$ by terms, and apply the preceding observation to their sum
and their product. Also $h(\alpha_i)=a_i$, so $h$ is surjective.
This proves $\Slin{k}\in\V(\Spow{2k-1})$. For every
$n\geq2k-1$, Proposition~\ref{prop:powerchain} gives the required
inclusion into $\V(\Spow{n})$.

For \eqref{eq:reversecontainment}, the case $n=1$ follows from the
subsemiring $\{0,a_1\}\cong S_c^*(a)$ of $\Slin{k}$.
For $n\geq2$, all squares coincide in $\Slin{k}$, whereas
$x^2\approx y^2$ fails in $\Spow{n}$ at $x=a,y=0$.
This rules out the reverse inclusion.
\end{proof}

The only small mixed join not immediately absorbed by
Theorem~\ref{thm:containment} is the join generated by $S_c^*(a^2)$
and $S_c^*(ab)$. Its finite basis can also be obtained by graph normal
forms.

\begin{lemma}\label{lem:smallmixed}
Relative to the commutative ai-semiring laws, the variety
\[
 \V(S_c^*(a^2))\vee\V(S_c^*(ab))
\]
is defined by
\begin{align}
 x&\preceq xy,\label{eq:smallmix1}\\
 t&\preceq xyz,\label{eq:smallmix2}\\
 xy&\preceq x^2+y^2,\label{eq:smallmix3}\\
 xt&\preceq xy+yz+zt,\label{eq:smallmix4}\\
 z^2&\preceq x^2+yz.\label{eq:smallmix5}
\end{align}
In particular, this join is finitely based.
\end{lemma}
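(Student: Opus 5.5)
Soundness comes first: we check that \eqref{eq:smallmix1}--\eqref{eq:smallmix5} hold in both $S_c^*(a^2)$ and $S_c^*(ab)$. In $S_c^*(ab)$ all of them follow from \eqref{eq:linear2basis}, because every square is the absorbing top. For instance, \eqref{eq:smallmix3} and \eqref{eq:smallmix5} have a square on the right, and \eqref{eq:smallmix2} follows from $t\preceq xyz$.

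In $S_c^*(a^2)$ the checks are short exponent computations in the chain $a<a^2<0$:
\begin{itemize}
\item[--] \eqref{eq:smallmix1} holds since the exponents only grow.
\item[--] \eqref{eq:smallmix2} holds since a product of three factors is $0$.
\item[--] \eqref{eq:smallmix3}: if $x^2+y^2\neq0$, then $x=y=a$, so both sides equal $a^2$.
\item[--] \eqref{eq:smallmix4}: a nonzero right side forces $x=y=z=t=a$.
\item[--] \eqref{eq:smallmix5}: a nonzero right side forces $x=y=z=a$, so both sides are $a^2$.
\end{itemize}

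Completeness follows the graph normal form method of Proposition~\ref{prop:smalllinear}. The identities \eqref{eq:smallmix1} and \eqref{eq:smallmix2} make every word of length at least three equal to the absorbing greatest element. The reduction goes in four steps.

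\emph{Step 1: graph form.} Every other term reduces, via \eqref{eq:smallmix1}, to a sum of edges $xy$, loops $x^2$ and isolated variables. We encode it as a graph with loops on the vertex set $c(\bu)$.

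\emph{Step 2: closure rules.} We derive closure rules.
\begin{itemize}
\item[--] \eqref{eq:smallmix4} adds an edge joining the endpoints of any three-edge walk. Loops count as edges, so a walk may use a loop.
\item[--] \eqref{eq:smallmix5} says that a loop anywhere together with an edge $yz$ yields the loop $z^2$. Hence if any loop is present, every non-isolated vertex acquires a loop.
\item[--] \eqref{eq:smallmix3} says that two loops at $x,y$ yield the edge $xy$.
\end{itemize}

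\emph{Step 3: normal forms.} Three kinds of normal form result.
\begin{itemize}
\item[(a)] If no loop arises, the closure is exactly as in Proposition~\ref{prop:smalllinear}. The result is a disjoint union of complete bipartite components plus isolated vertices; an odd cycle produces a loop through the shortcut rule and passes to case~(b).
\item[(b)] If a loop arises, then every non-isolated vertex gets a loop, and by \eqref{eq:smallmix3} all non-isolated vertices become pairwise adjacent. The form is determined by its set $M$ of non-isolated vertices together with its set $I$ of isolated ones, and reads $\sum_{x,y\in M}xy+\sum_{z\in I}z$, loops included.
\item[(c)] The top element.
\end{itemize}

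\emph{Step 4: separation.} We separate distinct normal forms by assignments into the two generators.
\begin{itemize}
\item[--] Forms of type (b), with different $M$ or $I$, are separated in $S_c^*(a^2)$ by assigning $a$ to some variables and $0$ to others.
\item[--] A type (b) form is zero under every assignment into $S_c^*(ab)$. A type (a) form with a nonempty edge set is nonzero in $S_c^*(ab)$ under a proper two-coloring, and this separates the two.
\item[--] Two forms of type (a) are separated in $S_c^*(ab)$ exactly as in Proposition~\ref{prop:smalllinear}.
\item[--] Forms with no edges are separated in either generator by their contents.
\end{itemize}
In case (b) the edge set on $M$ is complete, so there are no further choices to distinguish.

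I expect the main obstacle to be the delicate interaction in Step~2. We must verify that the closure is exactly as described: a bipartite graph never acquires a loop, and the rules add nothing beyond the complete-bipartite completion. This is inherited from Proposition~\ref{prop:smalllinear}, since \eqref{eq:smallmix3} and \eqref{eq:smallmix5} only fire once a loop exists. We must also confirm that a single loop really propagates to all of $M$, including isolated-free components not containing the loop; \eqref{eq:smallmix5} is global, so it does. Finiteness of the basis is then immediate.
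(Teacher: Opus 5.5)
Your overall route is the paper's: direct soundness checks, reduction to graph forms with loops, the odd-walk completion via \eqref{eq:smallmix4}, propagation of loops via \eqref{eq:smallmix5}, and separation in the two generators. Your saturated loop form $\sum_{x,y\in M}xy+\sum_{z\in I}z$ differs from the paper's $\sum_{x\in A}x^2+\sum_{z\in C\setminus A}z$ only by edge words, which \eqref{eq:smallmix3} adds or absorbs, so that choice is harmless.

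The genuine gap is the first bullet of Step~4. Assignments into $S_c^*(a^2)$ with values only in $\{a,0\}$ separate two type~(b) forms whose contents differ. They do not separate two type~(b) forms with the same content $C=M_1\cup I_1=M_2\cup I_2$ but $M_1\neq M_2$. Every variable of $C$ occurs in both forms, so a zero anywhere on $C$ makes both values $0$. The all-$a$ assignment gives both forms the value $a^2$, because $M_1,M_2\neq\varnothing$. For example, $x^2+y$ and $x^2+xy+y^2$ are not separated by such assignments. They are not separated in $S_c^*(ab)$ either, where every loop form is identically $0$. So your argument does not show these normal forms are distinct, and this is exactly the case that needs a further idea.

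The fix, which is what the paper does, uses the third nonzero value of $S_c^*(a^2)$. Choose $y$ in the symmetric difference, say $y\in M_1\setminus M_2$, and assign $a^2$ to $y$ and $a$ to every other variable.
\begin{itemize}
\item[--] In the first form the word $y^2$ has value $(a^2)^2=0$, so the form is $0$.
\item[--] In the second form $y$ occurs only as a singleton summand with value $a^2$, and every quadratic word has value $a^2$. So the form has value $a^2\neq0$.
\end{itemize}

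Two smaller omissions in Step~4 should also be repaired. First, you never separate the top form~(c) from the graph forms. This must be done in $S_c^*(a^2)$, for instance with every variable equal to $a$, because the loop forms also vanish in $S_c^*(ab)$. Second, your bullets do not literally cover an edgeless type~(a) form against a type~(b) form with the same content. Your $S_c^*(ab)$ argument handles this case verbatim, since any assignment of letters is a proper two-coloring of an edgeless graph.
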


\begin{proof}
All five inequalities hold in the two generating semirings. The first
two follow from their divisibility orders and the vanishing of triple
products. In $S_c^*(ab)$, the third and fifth have a square on the
right and hence hold; the fourth was verified in
Proposition~\ref{prop:smalllinear}. In $S_c^*(a^2)$, any nonzero
pair product forces both factors to equal $a$ and has value $a^2$.
For \eqref{eq:smallmix3}, nonzero values of both squares on the right
force both variables to equal $a$. For
\eqref{eq:smallmix4}, nonzero values of the three pair products
force all four variables to equal $a$. For
\eqref{eq:smallmix5}, a nonzero right-hand side forces $x,y,z$
to equal $a$. In each case the left-hand side then equals $a^2$.
A zero right-hand side makes the inequality automatic.

We show that the five inequalities are complete. By
\eqref{eq:smallmix2}, every triple product is the same greatest
element. By \eqref{eq:smallmix1}, this element is multiplicatively
absorbing. A term containing a word of length at least three therefore
has this greatest-element normal form.

Every remaining term is a sum of singleton variables and quadratic
words. Regard $xy$ with $x\neq y$ as an edge and $x^2$ as a
loop. Delete singleton variables incident with an edge or loop by
\eqref{eq:smallmix1}. Let $C$ be the content and let $A$ be the
set of vertices incident with the quadratic words.

The odd-walk completion from Proposition~\ref{prop:smalllinear},
now using \eqref{eq:smallmix4}, applies to this graph with possible
loops. If it has no odd cycle or loop, the result is a disjoint union
of complete bipartite graphs and isolated vertices. If some component
contains an odd cycle or loop, the completion produces a square term
$x^2$. In the presence of this square, \eqref{eq:smallmix5} adds
squares at both endpoints of every quadratic word, including words in
other components. Thus every variable in $A$ acquires a square.
By \eqref{eq:smallmix3}, all the edge words are then absorbed.
The resulting form is
\begin{equation}\label{eq:smallmixedloopform}
 \sum_{x\in A}x^2+\sum_{z\in C\setminus A}z,
 \qquad A\neq\varnothing.
\end{equation}

We separate these normal forms in the two generators. The
greatest-element form is distinguished from every graph form by assigning
$a$ to all variables in $S_c^*(a^2)$: a graph form has value $a$
or $a^2$, both nonzero. Graph forms with different contents are
separated in the same semiring by assigning zero to a variable present
only in one form and $a$ to all other variables.

Two forms \eqref{eq:smallmixedloopform} with the same content but
different sets $A$ are also separated in $S_c^*(a^2)$. Choose a
variable in one set $A$ but not the other, assign $a^2$ to that
variable, and assign $a$ to all others. The form in which it is
squared becomes zero; the other form has value $a^2$.

A loop form is identically zero in $S_c^*(ab)$, whereas any bipartite
graph form has a nonzero value under a proper two-coloring. Finally,
two distinct completed bipartite graph forms of the same content are
separated in $S_c^*(ab)$ by coloring an edge present only in one
form monochromatically while properly coloring the other, as in
Proposition~\ref{prop:smalllinear}. Thus no two distinct normal
forms have the same term function on both generators. Every identity
of the join has equal normal forms and follows from
\eqref{eq:smallmix1}--\eqref{eq:smallmix5}.
\end{proof}

\begin{theorem}\label{thm:finitejoin}
For all finite integers $n,k\geq1$, the variety
\[
 \V(\Spow{n})\vee\V(\Slin{k})
\]
is finitely based if and only if $k\leq2$ or $n\geq2k-1$.
If $n\geq2k-1$, this join equals $\V(\Spow{n})$.
\end{theorem}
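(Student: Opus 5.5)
The plan has three cases. If $n\geq2k-1$, Theorem~\ref{thm:containment} gives $\V(\Slin{k})\subseteq\V(\Spow{n})$. So the join equals $\V(\Spow{n})$, which is finitely based by Theorem~\ref{thm:finitepowers}. This also proves the last sentence of the statement.

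Next suppose $k\leq2$ and $n<2k-1$. If $k=1$ then $n<1$ is impossible, so $k=2$ and $n\leq2$. For $n=1$, the variety $\V(S_c^*(a))$ is contained in $\V(S_c^*(ab))$ by \eqref{eq:reversecontainment}. The join is then $\V(S_c^*(ab))$, which is finitely based by Proposition~\ref{prop:smalllinear}. For $n=2$, the join is finitely based by Lemma~\ref{lem:smallmixed}.

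The remaining case is $k\geq3$ and $n\leq2k-2$, where I must show nonfinite basability. I will rerun the proof of Theorem~\ref{thm:main} with $\Slin{k}$ replaced by the join. Suppose $\Delta$ is a finite basis of inequalities $\bq\preceq\bu$, each valid in both $\Spow{n}$ and $\Slin{k}$. Lemma~\ref{lem:preservation} only needs each member of $\Delta$ to hold in $\Slin{k}$, so $(P_{\HH})$ is still preserved along deductions from $\Delta$, for $\HH$ of girth greater than $N$.

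The obstacle is the target identity: I need an inequality valid in the join but failing $(P_{\HH})$. The identity $\bq_{\HH}\preceq\bt_{\HH}$ holds in $\Slin{k}$ but probably not in $\Spow{n}$, since there $\bt_{\HH}$ can be nonzero while the long word $\bq_{\HH}$ is zero. So I would use Lemma~\ref{lem:thresholdinequality} and take the target
\[
 \bt_{\HH}\approx\bt_{\HH}+\bq',\qquad \bq'=x_{v}^2\textstyle\prod_{w\in e\setminus\{v\}}x_w,
\]
for a fixed hyperedge $e$ and vertex $v\in e$. Multiplying $x_v^2\preceq\prod_{w\in e}x_w$ by $\prod_{w\in e\setminus\{v\}}x_w$ shows that $\bq'$ lies below a word of length $2k-1>n$. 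In $\Spow{n}$ such a word is $0$, so this alone does not give $\bq'$ below $\bt_{\HH}$. A cleaner choice is $\bq'=x_v^2$ itself. In $\Spow{n}$, Lemma~\ref{lem:thresholdinequality} substituted into the edge $e$ gives $x_v^2\preceq\prod_{w\in e}x_w\preceq\bt_{\HH}$. In $\Slin{k}$, non-strong-colorability makes $\bt_{\HH}$ identically $0$, as in Lemma~\ref{lem:hypergraphvalue}. Hence $x_v^2\preceq\bt_{\HH}$ holds in the join. Since $x_v^2$ is nonlinear, $\bt_{\HH}+x_v^2$ fails $(P_{\HH})$. This contradicts preservation, exactly as in the proof of Theorem~\ref{thm:main}, with Theorem~\ref{thm:EH} supplying $\HH$.

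The main point to verify is that Lemma~\ref{lem:preservation} really uses only validity in $\Slin{k}$, so that the extra identities of the join do no harm. On reading its proof, it does: validity enters only through Lemma~\ref{lem:localvalidity} and through Step~5.
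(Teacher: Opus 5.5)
Your proposal is correct and follows the paper's proof essentially step for step. The case $n\geq 2k-1$ is handled via Theorem~\ref{thm:containment} and Theorem~\ref{thm:finitepowers}, and the cases $k=2$, $n\in\{1,2\}$ via the one-letter subsemiring and Lemma~\ref{lem:smallmixed}; for $k\geq 3$, $n\leq 2k-2$ you use the same target $x_v^2\preceq\bt_{\HH}$ (Lemma~\ref{lem:thresholdinequality} in $\Spow{n}$, non-strong-colorability in $\Slin{k}$) together with Lemma~\ref{lem:preservation}, which, as you note, needs only validity in $\Slin{k}$.
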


\begin{proof}
The equality in the last assertion is
Theorem~\ref{thm:containment}, and the resulting variety is finitely
based by Theorem~\ref{thm:finitepowers}. If $k=1$, this covers
every $n$. If $k=2$, it covers $n\geq3$; the case $n=1$ is
$\V(S_c^*(ab))$ by the inclusion of the one-letter subsemiring,
and the case $n=2$ is Lemma~\ref{lem:smallmixed}. Hence all cases
with $k\leq2$ are finitely based.

It remains to consider $k\geq3$ and $n\leq2k-2$. Suppose that
the displayed join has a finite basis. Normalize it by
Lemma~\ref{lem:normalization} to obtain a finite family $\Delta$
of inequalities $\bq\preceq\bu$, with $\bq$ a word.
Choose $N\geq1$ so that $|\bu|\leq N$ for every member of
$\Delta$. Every such inequality holds in $\Slin{k}$.

By Theorem~\ref{thm:EH}, choose a finite $k$-uniform hypergraph
$\HH$ with no isolated vertices, with
\[
 g(\HH)>\max\{N,3\},\qquad \chi(\HH)>k.
\]
Choose a vertex $v$ and a hyperedge $e$ containing it. We claim that
\begin{equation}\label{eq:squarehypergraph}
 x_v^2\preceq\bt_{\HH}
\end{equation}
holds in both generating semirings. In $\Slin{k}$, the upper
term is identically zero by Lemma~\ref{lem:hypergraphvalue}, since
$\HH$ has no strong $k$-coloring. In $\Spow{n}$,
Lemma~\ref{lem:thresholdinequality}, applied to $e$, gives
\[
 x_v^2\preceq_{\Spow{n}}\prod_{u\in e}x_u
 \preceq_{\Spow{n}}\bt_{\HH}.
\]
This proves the claim. Thus \eqref{eq:squarehypergraph} is an
identity of the join and must follow from $\Delta$.

Every word of $\bt_{\HH}$ is linear and has content a hyperedge.
Lemma~\ref{lem:preservation} applies to every elementary deduction
from $\Delta$ and preserves this property along a deduction starting
at $\bt_{\HH}$. The term $\bt_{\HH}+x_v^2$ contains a
nonlinear word, and so cannot be its endpoint. This contradicts the
derivability of \eqref{eq:squarehypergraph}. The join is therefore
nonfinitely based.
\end{proof}

\section{Infinite powers and infinite joins}
\label{sec:infinitejoins}

We now identify the variety generated by all finite powers and determine
the mixed joins involving the unrestricted linear family.

\begin{theorem}\label{thm:infinitepower}
One has
\begin{equation}\label{eq:infinitepower}
 \V(\Spowinf)
 =\bigvee_{n\geq1}\V(\Spow{n})
 =\V((\mathbb N_0,\max,+)).
\end{equation}
This variety is nonfinitely based.
\end{theorem}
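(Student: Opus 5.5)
We prove the chain of equalities in \eqref{eq:infinitepower} by three inclusions and then quote Aceto--\'Esik--Ing\'olfsd\'ottir.

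\emph{First equality.} Each $\Spow{n}$ is a homomorphic image of $\Spowinf$. Adjoin an absorbing zero and send $a^i\mapsto a^i$ for $i\leq n$ and $a^i\mapsto0$ for $i>n$. Multiplication adds exponents and addition takes maxima, and both are monotone, so exceeding $n$ is preserved and the map is a homomorphism. This gives $\bigvee_n\V(\Spow{n})\subseteq\V(\Spowinf)$.

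For the converse, take an identity $\bu\approx\bv$ failing in $\Spowinf$ under some assignment $\alpha$.
\begin{itemize}
\item The assignment uses finitely many values. If none of them is $0$, then every word evaluates to some $a^m$.
\item Let $n$ exceed every exponent that occurs among the words of $\bu$ and $\bv$. Then the evaluation is the same in $\Spow{n}$ as in $\Spowinf$, since nothing is truncated.
\item If some variables take the value $0$, the same evaluation again works in $\Spow{n}$, because $0$ is absorbing in both.
\end{itemize}
Hence the identity fails in some $\Spow{n}$.

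\emph{Second equality.} I would identify $\Spowinf$, with its absorbing zero, as $(\mathbb N_0\cup\{\infty\}$ restricted to $\mathbb N_{\geq1}\cup\{\infty\},\max,+)$. This is the subsemiring of $(\mathbb N\cup\{\infty\},\max,+)$ consisting of the positive integers together with $\infty$.

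To show $\V(\Spowinf)\subseteq\V((\mathbb N_0,\max,+))$, I need that $\infty$ adds nothing new.
\begin{itemize}
\item Any identity failing under an assignment using $\infty$ also fails after replacing $\infty$ by a sufficiently large integer $M$.
\item Indeed, a word containing a variable sent to $\infty$ then has value at least $M$, larger than every word free of such variables.
\item So the two sides compare the same way for large $M$: both are $\infty$ exactly when they contain such a word, and if both do, then large $M$ makes them compare like the coefficients of $M$ together with the constants.
\end{itemize}
This is the step needing most care. A cleaner route is to verify identities directly: in $\mathbb N_{\ge1}\cup\{\infty\}$ a side equals $\infty$ iff it contains a word meeting the infinite variables. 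When both sides are finite, the values are values in $(\mathbb N_{\ge1},\max,+)$ after deleting nothing. When both are infinite, they are equal. When exactly one is infinite, take the infinite variable to be $M$: for large $M$ that side strictly exceeds the other, which is independent of $M$. So the sets coincide on a finite assignment.

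Conversely, $(\mathbb N_{\geq1},\max,+)$ is a subsemiring of $\Spowinf$. An identity failing in $(\mathbb N_0,\max,+)$ also fails in $\mathbb N_{\geq1}$: replace each value $m$ by $m\cdot L+1$ for large $L$. This scales linearly and perturbs each word by its length, which is too small to reverse a strict inequality of differences at least $L$.

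For the reverse inclusion, first note that $(\mathbb N_0,\max,+)$ satisfies identities valid in $\mathbb N_{\geq1}$ by the same trick run backwards. Scaling by $L$ preserves equality and strictness, and adding $1$ to each value shifts each word by its length. Here I should handle the resulting mismatch by instead comparing linear forms: an identity of $(\mathbb N_0,\max,+)$ is equivalent to equality of piecewise-linear maxima on the rational cone, and this is determined by values on any positive open region by homogeneity and density. I expect this density/homogeneity argument to be the main obstacle to write cleanly.

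\emph{Nonfinite basis.} The nonfinite basis follows from the Aceto--\'Esik--Ing\'olfsd\'ottir theorem \cite{AcetoEsikIngolf} that $(\mathbb N_0,\max,+)$ has no finite equational basis in the signature $(\max,+)$. If their result is stated with constants or for $\mathbb N\cup\{-\infty\}$, I would check it transfers to the reduct used here.
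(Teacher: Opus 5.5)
Your plan has the same architecture as the paper's proof. Truncation homomorphisms $\Spowinf\to\Spow{n}$, which become injective on any finitely many values, give the first equality. The identification $\Spowinf\cong(\mathbb N_{>0}\cup\{\infty\},\max,+)$, a comparison of positive with nonnegative integers, and a check that the absorbing $\infty$ creates no new failures give the second. Aceto, \'Esik and Ing\'olfsd\'ottir give the nonfinite basis. Your $\infty\mapsto M$ case analysis (the ``cleaner route'') is correct. It is the contrapositive of the paper's argument, which first shows that every max-plus identity is regular by letting a one-sided variable grow, and then observes that regular identities survive adjoining $\infty$.

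The genuine difference is the passage from $\mathbb N_{\geq1}$ to $\mathbb N_0$. The paper scales to positive rationals and uses continuity to reach all nonnegative reals. Your substitution $m\mapsto mL+1$ stays inside the integers: it multiplies each word's linear form by $L$ and shifts it by that word's length. Distinct values in $\mathbb N_0$ differ by at least $1$, so a strict inequality becomes a gap of at least $L+1$ minus the largest word length. This gap is positive for large $L$. Your route is valid and a little more elementary than the paper's.

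Two corrections. First, your last paragraph (``For the reverse inclusion\ldots'') is redundant, and the ``density/homogeneity obstacle'' it anticipates does not exist. The preceding $mL+1$ argument already shows that every identity of $(\mathbb N_{\geq1},\max,+)$ holds in $(\mathbb N_0,\max,+)$. The opposite inclusion of identity sets is restriction to a subsemiring. Together with the $\infty$ argument and restriction from $\Spowinf$ to $\mathbb N_{\geq1}$, all the inclusions needed for the second equality are already in hand. Likewise, drop the first bullet list about comparing coefficients of $M$. If both sides contain a word meeting the variables sent to $\infty$, both sides equal $\infty$, and there is no failure to transfer.

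Second, the step you leave as ``I would check'' is a real gap. Aceto, \'Esik and Ing\'olfsd\'ottir prove their theorem in the signature $(\max,+,0)$, not $(\max,+)$, and nonfinite basability does not pass between a signature and its reduct in general. The paper closes this using that $0$ is neutral for both $\max$ and numerical addition on $\mathbb N_0$:
\begin{itemize}
\item Adjoin the two neutrality laws to a hypothetical finite binary basis. These laws reduce every term of the expanded signature to $0$ or to a constant-free term.
\item An identity between $0$ and a term containing a variable fails when all variables take the value $1$.
\item Identities between constant-free terms follow from the binary basis.
\end{itemize}
This would give a finite basis in their signature, contradicting their theorem.
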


\begin{proof}
For each $n\geq1$, define
\[
 \pi_n:\Spowinf\longrightarrow\Spow{n},\qquad
 \pi_n(a^j)=
 \begin{cases}
 a^j,&j\leq n,\\
 0,&j>n,
 \end{cases}
 \qquad \pi_n(0)=0.
\]
Formula \eqref{eq:poweroperations} shows that $\pi_n$ preserves
addition and multiplication, and it is surjective. The family of maps
separates elements: two distinct powers, or a power and zero, are
separated by any sufficiently large truncation. Hence the diagonal map
into $\prod_{n\geq1}\Spow{n}$ is an injective homomorphism with
surjective coordinate maps. Each finite member is a homomorphic image
of $\Spowinf$, and the infinite member is a subsemiring of their
product. These two observations prove the first equality in
\eqref{eq:infinitepower}.

The correspondence $a^j\mapsto j$, $0\mapsto\infty$ is an
isomorphism
\[
 \Spowinf\cong(\mathbb N_{>0}\cup\{\infty\},\max,+),
\]
where $\infty$ is absorbing for both operations. We compare its
identities with those of $(\mathbb N_0,\max,+)$ in the signature
$(+,\cdot)$.

Every term function on numerical arguments is a maximum of finitely
many homogeneous linear functions with nonnegative integer coefficients.
If an identity holds on positive integers, positive integer scaling
first extends it to positive rational arguments. Continuity then
extends it to all nonnegative real arguments, and in particular to
$\mathbb N_0$. The converse is immediate by restriction. Thus
positive and nonnegative integer max-plus semirings have the same
identities in this signature.

Every such identity is regular: if a variable occurs on only one side,
fix all other variables at $1$ and let this variable increase. The
side containing it is unbounded, while the other side is fixed, a
contradiction. A regular identity remains valid on adjoining the
absorbing element $\infty$. An assignment using $\infty$ on a
variable in the common content makes both sides equal to $\infty$;
all other assignments are already covered. Conversely, validity in the
extension implies validity in its positive-integer subsemiring. This
proves the second equality in \eqref{eq:infinitepower}.

Aceto, \'Esik and Ing\'olfsd\'ottir proved that the max-plus algebra
of nonnegative integers has no finite equational basis
\cite[Theorem~4.1]{AcetoEsikIngolf}. Their signature also names the
numerical constant zero, which is an identity for both operations.
The result implies nonfinite basability in our binary signature as
follows. If a finite binary basis existed, adjoining the two
common-identity laws for that constant would give a finite basis in
the expanded signature. Indeed, those laws reduce any expanded term
to the constant alone or to a constant-free term. An identity between
the constant and a nonconstant term fails when all variables have
numerical value $1$. Identities between two constant-free terms follow
from the assumed binary basis, and the constant identity is trivial.
This contradicts the cited theorem and completes the proof.
\end{proof}

\begin{corollary}\label{cor:infinitepowerabsorbs}
For every $k\geq1$,
\begin{align*}
 \V(\Spowinf)\vee\V(\Slin{k})&=\V(\Spowinf),\\
 \V(\Spowinf)\vee\V(\Slininf)&=\V(\Spowinf).
\end{align*}
These joins are nonfinitely based.
\end{corollary}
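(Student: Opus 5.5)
The plan is to reduce both equalities to containments that are already established, and then take nonfinite basability directly from Theorem~\ref{thm:infinitepower}.

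\emph{Step 1: each finite linear member lies in the max-plus variety.} For each $k\geq1$, Theorem~\ref{thm:containment} with $n=2k-1$ gives
\[
 \V(\Slin{k})\subseteq\V(\Spow{2k-1}).
\]
By Theorem~\ref{thm:infinitepower}, $\V(\Spow{2k-1})\subseteq\bigvee_{n\geq1}\V(\Spow{n})=\V(\Spowinf)$. Hence $\V(\Slin{k})\subseteq\V(\Spowinf)$, and the first join collapses to $\V(\Spowinf)$.

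\emph{Step 2: the unrestricted linear family.} Corollary~\ref{cor:linearinfjoin} gives
\[
 \V(\Slininf)=\bigvee_{k\geq1}\V(\Slin{k}).
\]
Every joinand lies in $\V(\Spowinf)$ by Step~1. A join of subvarieties of a fixed variety stays inside that variety, so $\V(\Slininf)\subseteq\V(\Spowinf)$, and the second join also equals $\V(\Spowinf)$.

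\emph{Step 3: nonfinite basis.} Both joins equal $\V(\Spowinf)$, which is nonfinitely based by Theorem~\ref{thm:infinitepower}.

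There is no real obstacle. The only point to check is that Theorem~\ref{thm:containment} is stated for all $k\geq1$. For $k=1$ it gives $\V(S_c^*(a))\subseteq\V(\Spow{1})$, which is trivially correct because $S_c^*(a)$ is the same semiring as $S_c^*(a^1)$.
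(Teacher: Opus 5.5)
Your proposal is correct and follows the paper's own proof. The paper likewise places each $\V(\Slin{k})$ inside $\V(\Spow{2k-1})\subseteq\V(\Spowinf)$ via Theorems~\ref{thm:containment} and~\ref{thm:infinitepower}, then passes to the unrestricted linear family through Corollary~\ref{cor:linearinfjoin}, and takes nonfinite basability from Theorem~\ref{thm:infinitepower}.
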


\begin{proof}
Theorem~\ref{thm:containment} places $\Slin{k}$ in
$\V(\Spow{2k-1})$, which is contained in $\V(\Spowinf)$
by Theorem~\ref{thm:infinitepower}. Taking the join over all $k$
and applying Corollary~\ref{cor:linearinfjoin} proves the second
equality. Nonfinite basability follows from
Theorem~\ref{thm:infinitepower}.
\end{proof}

For finite powers joined with the unrestricted linear family, the
finite basis property has the opposite outcome.

\begin{lemma}\label{lem:degreereduction}
For each $n\geq1$, both $\Spow{n}$ and $\Slininf$ satisfy
\begin{equation}\label{eq:degreereduction}
 x_1\cdots x_{n+2}
 \approx
 \sum_{i=1}^{n+2}x_1\cdots\widehat{x_i}\cdots x_{n+2},
\end{equation}
where the hat denotes omission of the indicated variable. Modulo this
identity, every term is equivalent to a sum of words of length at
most $n+1$.
\end{lemma}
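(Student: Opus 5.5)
We verify \eqref{eq:degreereduction} separately in the two semirings and then derive the reduction statement by induction on word length.

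\emph{Validity in $\Slininf$.} Here \eqref{eq:degreereduction} is the instance $r=n+2$ of \eqref{eq:pairsexpansion}, read backwards. Indeed, both sides of \eqref{eq:degreereduction} are equivalent, modulo the basis of Theorem~\ref{thm:unrestrictedbasis}, to $\sum_{i<j}x_ix_j$. For $n+2\geq3$, every pair $\{i,j\}$ is contained in some word $x_1\cdots\widehat{x_l}\cdots x_{n+2}$ with $l\notin\{i,j\}$. Expanding each summand on the right by \eqref{eq:pairsexpansion} therefore yields exactly the set of all pairs.

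One could instead verify this semantically. If some $x_i$ is $0$, both sides are $0$. If two factors share a letter, the left side is $0$, and some omitted-index word still contains both factors, since $n+2\geq3$; that word is $0$, hence the right side is $0$. Otherwise both sides equal the square-free union of the contents, since each letter lies in some word that omits a different index.

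\emph{Validity in $\Spow{n}$.} If some variable is $0$, both sides are $0$. Otherwise put $x_i\mapsto a^{r_i}$ with $r_i\geq1$. The left side has exponent $\sum r_i\geq n+2>n$, so its value is $0$. Each summand on the right is a product of $n+1$ nonzero elements, so its exponent is at least $n+1>n$ and its value is $0$ as well. Hence both sides are $0$.

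\emph{Reduction of terms.} Apply \eqref{eq:degreereduction} to any word of length $L\geq n+2$. Write the word as a product of $n+2$ nonempty subwords and substitute these subwords for the $x_i$. The word is then replaced by a sum of words, each of length strictly less than $L$, since every summand omits one nonempty block. Replacement inside the additive context is legitimate by Lemma~\ref{lem:deduction}.

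Consider the multiset of lengths of words of length at least $n+2$ in the term, ordered by the multiset order. Each step replaces one such length by finitely many strictly smaller lengths, so this multiset decreases strictly. Induction on this multiset therefore terminates in a term all of whose words have length at most $n+1$.

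The only point needing care is that the decomposition into $n+2$ blocks exists, which holds because $L\geq n+2$. No serious obstacle is expected.
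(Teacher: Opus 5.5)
Your proof is correct and takes essentially the paper's approach. In $S_c^*(a^n)$ every word involved has constant value zero; in the unrestricted linear semiring both sides have the same pair-graph normal form via \eqref{eq:pairsexpansion}; terms are then reduced by repeated length-decreasing applications of \eqref{eq:degreereduction}. The only differences are cosmetic: you substitute $n+2$ nonempty blocks covering the whole word where the paper uses $n+2$ single occurrences plus a multiplicative context, and you make termination explicit with the multiset order.
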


\begin{proof}
In $\Spow{n}$, both sides have constant value zero, because each
word has length at least $n+1$. In $\Slininf$, the two sides
have the same content and the same graph normal form: every pair of
variables occurs together in a summand on the right. Since $n+2\geq3$,
such a summand exists for every pair. Theorem~\ref{thm:unrestrictedbasis}
therefore gives the identity.

To reduce a word of length $r\geq n+2$, apply
\eqref{eq:degreereduction} to any $n+2$ of its occurrences and
retain the remaining occurrences as a multiplicative context. Each
resulting word has length $r-1$. Repeatedly applying this operation
to words of length greater than $n+1$ gives the asserted reduction.
Occurrences can be chosen even when some variables repeat, since
\eqref{eq:degreereduction} admits arbitrary substitutions.
\end{proof}

\begin{theorem}\label{thm:infinitemixed}
For every finite integer $n\geq1$, the variety
\[
 \V(\Spow{n})\vee\V(\Slininf)
\]
is finitely based.
\end{theorem}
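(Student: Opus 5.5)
The plan is to mimic the proof of Theorem~\ref{thm:finitepowers}: show that the join is locally finite with an explicit bound on the size of the terms that matter, and then take as basis the degree-reduction identity \eqref{eq:degreereduction} together with all valid inequalities of bounded size. Let $\V_n$ denote the join.

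Its identities are exactly those holding in both $\Spow{n}$ and $\Slininf$. The basis I propose consists of:
\begin{itemize}
\item the identity \eqref{eq:degreereduction};
\item every inequality $\bq\preceq\bu$ valid in both generators that uses variables from a fixed finite alphabet of size $M$ and has all its words of length at most $n+1$.
\end{itemize}
This set is finite, because on a fixed alphabet there are only finitely many words of bounded length.

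By Lemma~\ref{lem:degreereduction} and Lemma~\ref{lem:normalization}, it suffices to derive each valid inequality $\bq\preceq\bu$ in which $\bq$ and all words of $\bu$ have length at most $n+1$. If $\bq$ is long, reduce it as well: $\bq$ becomes a sum of shorter words, and each summand must be derived separately.

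The key step is a bounded selection lemma for the join, analogous to Lemma~\ref{lem:powerselection}. It should produce a subterm $\bu_0\subseteq\bu$ with a bounded number of variables such that $\bq\preceq\bu_0$ still holds in both generators. Since $\bu_0$ is a subterm of $\bu$, the inequality $\bq\preceq\bu_0$ implies $\bq\preceq\bu$. We do not need a single $\bu_0$ that works in both semirings simultaneously: we may take the union of a selection valid for $\Spow{n}$ and a selection valid for $\Slininf$, because enlarging the upper term preserves validity in each generator.
\begin{itemize}
\item For $\Spow{n}$: if $\bu$ contains a word of length $n+1$, then $\bu$ is identically zero there, and a single word suffices. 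Otherwise, Lemma~\ref{lem:powerselection} gives the selection directly.
\item For $\Slininf$, use Lemma~\ref{lem:unrestrictedcriterion}. If $\bu$ contains a nonlinear word, select that word. Otherwise $\bq$ is linear with $\ell(\bq)\leq n+1$. For each variable of $\bq$, select one word containing it; for each pair of variables of $\bq$, select one word containing both. This gives at most $(n+1)+\binom{n+1}{2}$ words, each of length at most $n+1$.
\end{itemize}
The number of variables of $\bq$ and $\bu_0$ is then bounded in terms of $n$ alone, and we take $M$ to be this bound. The selected inequality $\bq\preceq\bu_0$ is valid in both generators, so after renaming variables it belongs to the basis.

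The main obstacle is the case in which $\bq$ itself has length greater than $n+1$. In that case $\bq$ has value $0$ in $\Spow{n}$, so validity forces $\bu$ to be identically $0$ in $\Spow{n}$. We still need to see that this case is covered. I would handle it by first reducing $\bq$ via \eqref{eq:degreereduction} to a sum $\sum_j\bq_j$ of words of length at most $n+1$. The identity then becomes $\bu\approx\bu+\sum_j\bq_j$, so by Lemma~\ref{lem:normalization} it splits into the inequalities $\bq_j\preceq\bu$. Each of these is still valid, because $\sum_j\bq_j$ has the same value as $\bq$ in both generators. This reduces everything to short left-hand sides, to which the selection applies.

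I would also double-check that applying \eqref{eq:degreereduction} inside the upper term $\bu$ is legitimate. It is, because it is an identity, so replacement in context is allowed.
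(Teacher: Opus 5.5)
Your proposal is correct and matches the paper's proof essentially step for step. Both arguments begin with degree reduction via \eqref{eq:degreereduction}. They then take the union of a selection for $\Spow{n}$ (a single word of length $n+1$, or Lemma~\ref{lem:powerselection}) with the variable-and-pair witnesses from Lemma~\ref{lem:unrestrictedcriterion} for $\Slininf$. The finite basis is all bounded valid inequalities on a fixed alphabet. Your separate treatment of a long left-hand word $\bq$ is what the paper achieves by applying Lemma~\ref{lem:degreereduction} to the whole identity before normalizing with Lemma~\ref{lem:normalization}.
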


\begin{proof}
Use the commutative ai-semiring laws and
\eqref{eq:degreereduction} as background identities. By
Lemma~\ref{lem:degreereduction} and then
Lemma~\ref{lem:normalization}, it suffices to derive each valid
inequality $\bq\preceq\bu$ in which $\bq$ is a word and all
words on either side have length at most $n+1$.
Such an inequality holds in each of $\Spow{n}$ and $\Slininf$.
We choose two additive subterms of $\bu$, one for each generator.

For $\Spow{n}$, if $\bu$ contains a word of length $n+1$,
let $\bu_1$ be that single word. It has constant value zero, so
$\bq\preceq_{\Spow{n}}\bu_1$. Otherwise all upper words have
length at most $n$. Lemma~\ref{lem:powerselection} then gives
an additive subterm $\bu_1$ with
$\bq\preceq_{\Spow{n}}\bu_1$ and with the bound
\eqref{eq:powerbound} on its variables together with those of
$\bq$. In either case the number of variables used in $\bu_1$
is bounded solely in terms of $n$.

For $\Slininf$, suppose first that $\bu$ contains a nonlinear
word. Choose it as $\bu_2$; then
$\bq\preceq_{\Slininf}\bu_2$ by
Lemma~\ref{lem:unrestrictedcriterion}. Suppose instead that every
word of $\bu$ is linear. The same lemma says that $\bq$ is
linear, every variable of $\bq$ occurs in $\bu$, and every pair
of distinct variables of $\bq$ occurs together in some word of
$\bu$. For each variable choose one such upper word containing it,
and for each pair choose an upper word containing both. Let $\bu_2$
be the sum of these chosen words. Applying the criterion again gives
\[
 \bq\preceq_{\Slininf}\bu_2.
\]
Since $\ell(\bq)\leq n+1$, at most
\[
 (n+1)+\binom{n+1}{2}
\]
words have been selected, each of length at most $n+1$.

Now set $\bu_0=\bu_1+\bu_2$, deleting repetitions. It is an
additive subterm of $\bu$. It satisfies
$\bq\preceq\bu_0$ in both generating semirings, since its value
is at least the value of either selected subterm. Hence that inequality
holds in their join. The total number of variables in $\bq$ and
$\bu_0$ is bounded, for example, by
\begin{equation}\label{eq:infinitemixedbound}
 n\binom{2n+1}{n+1}
 +(n+1)\left(n+3+\binom{n+1}{2}\right).
\end{equation}
The first term bounds the contribution from the power selection; the
second accommodates $\bq$, a selected long or nonlinear word, and
the variable and pair witnesses for the unrestricted linear semiring.

Take all inequalities valid in the join on a fixed alphabet of size
\eqref{eq:infinitemixedbound}, with all words of length at most
$n+1$. This is a finite family. After renaming variables, it contains
every selected inequality $\bq\preceq\bu_0$. Each such inequality
implies $\bq\preceq\bu$ because $\bu_0$ is an additive subterm.
Together with \eqref{eq:degreereduction} and the background laws,
this finite family consequently derives every identity of the join.
\end{proof}

\begin{corollary}\label{cor:cofinaljoins}
For every finite $n\geq1$,
\begin{equation}\label{eq:cofinaljoins}
 \bigvee_{k\geq1}
 \bigl(\V(\Spow{n})\vee\V(\Slin{k})\bigr)
 =\V(\Spow{n})\vee\V(\Slininf)
\end{equation}
is finitely based, although all members on the left with
$k\geq3$ and $n\leq2k-2$ are nonfinitely based.
\end{corollary}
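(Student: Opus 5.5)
The corollary combines Corollary~\ref{cor:linearinfjoin}, Theorem~\ref{thm:infinitemixed} and Theorem~\ref{thm:finitejoin}; I would establish the equality \eqref{eq:cofinaljoins} first and then read off the two finite basis assertions.

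For the equality, I will use the fact that joins in the lattice of varieties are associative and commutative, and that a join of a family of varieties is generated by the union of their generators. For each fixed $n$,
\[
 \bigvee_{k\geq1}\bigl(\V(\Spow{n})\vee\V(\Slin{k})\bigr)
 =\V(\Spow{n})\vee\bigvee_{k\geq1}\V(\Slin{k}).
\]
Here $\V(\Spow{n})$ is repeated, which is harmless since joins are idempotent. By Corollary~\ref{cor:linearinfjoin}, the inner join equals $\V(\Slininf)$, and this gives \eqref{eq:cofinaljoins}. To avoid relying on infinitary lattice manipulations, I can instead compare identity sets. An identity holds in the left-hand join iff it holds in $\Spow{n}$ and in every $\Slin{k}$. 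By the directed-union argument of Corollary~\ref{cor:linearinfjoin}, holding in every $\Slin{k}$ is the same as holding in $\Slininf$. So an identity holds on the left iff it holds in $\Spow{n}$ and in $\Slininf$, which is exactly the condition for the right-hand side.

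The right-hand variety is finitely based by Theorem~\ref{thm:infinitemixed}. For the final clause, a member with $k\geq3$ and $n\leq2k-2$ fails the criterion ``$k\leq2$ or $n\geq2k-1$'' of Theorem~\ref{thm:finitejoin}, so it is nonfinitely based.

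There is no real obstacle. The only point needing care is justifying the interchange of the infinite join with the fixed join, and the identity-level argument above handles that directly.
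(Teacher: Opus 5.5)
Your proposal is correct and follows the paper's proof: the equality comes from associativity of joins together with Corollary~\ref{cor:linearinfjoin}, the right-hand side is finitely based by Theorem~\ref{thm:infinitemixed}, and the members with $k\geq3$ and $n\leq2k-2$ are nonfinitely based by Theorem~\ref{thm:finitejoin}. Your identity-level check of the infinite join is a harmless extra justification of the same step.
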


\begin{proof}
The equality is the associativity of joins together with
Corollary~\ref{cor:linearinfjoin}. Its right-hand side is finitely
based by Theorem~\ref{thm:infinitemixed}. The indicated finite
members are nonfinitely based by Theorem~\ref{thm:finitejoin}.
For fixed $n$, these members form a cofinal tail of the family.
\end{proof}

\section{Intervals and a limit subvariety}
\label{sec:intervals}

We first extract interval statements from the preservation lemma. We
then place the known limit variety generated by $TR_6$ below
$\V(S_c^*(abc))$ by an explicit quotient construction.

\begin{theorem}\label{thm:linearinterval}
For integers $3\leq k\leq m<\infty$, every variety in the interval
\[
 [\V(\Slin{k}),\V(\Slin{m})]
\]
is nonfinitely based.
\end{theorem}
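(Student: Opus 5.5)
Let $\mathcal V$ be a variety with $\V(\Slin{k})\subseteq\mathcal V\subseteq\V(\Slin{m})$, and suppose for a contradiction that $\mathcal V$ has a finite basis. By Lemma~\ref{lem:normalization} we replace it by a finite family $\Delta$ of inequalities $\bq\preceq\bu$ with $\bq$ a word, and fix $N\geq1$ with $|\bu|\leq N$ for all members of $\Delta$. Each member of $\Delta$ holds in $\mathcal V$, hence in the subvariety $\V(\Slin{k})$, and in particular in $\Slin{k}$. This is the only use of the lower endpoint. It is exactly the hypothesis of Lemma~\ref{lem:preservation} for this $k$.

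Next we need an identity of $\mathcal V$ that fails $(P_{\HH})$ at its endpoint. This is where the upper endpoint enters, by analogy with the proof of Theorem~\ref{thm:finitejoin}. By Theorem~\ref{thm:EH} we choose a $k$-uniform hypergraph $\HH$ without isolated vertices such that $g(\HH)>\max\{N,3\}$ and $\chi(\HH)>m$. We then want $\bt_{\HH}$ to be identically zero in $\Slin{m}$, so that $\bq_{\HH}\preceq\bt_{\HH}$ holds in $\Slin{m}$ and therefore in $\mathcal V$. Suppose $\alpha(\bt_{\HH})\neq0$ in $\Slin{m}$. By Lemma~\ref{lem:values}(i),(ii), every vertex gets a nonzero value, and the values on each hyperedge have pairwise disjoint contents. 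Choose for each vertex $v$ one letter of $\alpha(x_v)$ and use its index in $[m]$ as the color of $v$. Disjointness makes this coloring injective on every hyperedge, so it is a proper $m$-coloring of $\HH$, which contradicts $\chi(\HH)>m$. (For $k\geq2$, a strong coloring is proper.) Hence $\alpha(\bt_{\HH})=0$ for all $\alpha$, and $\bt_{\HH}\approx\bt_{\HH}+\bq_{\HH}$ holds in $\Slin{m}$, hence in $\mathcal V$.

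Then we argue exactly as in Theorem~\ref{thm:main}. Lemma~\ref{lem:deduction} gives a chain from $\bt_{\HH}$ to $\bt_{\HH}+\bq_{\HH}$ by elementary applications of $\Delta$, and Lemma~\ref{lem:preservation} propagates $(P_{\HH})$ along it. On the other hand, $|V(\HH)|>m\geq k$, since otherwise coloring all vertices distinctly with at most $m$ colors would be proper. So $\bq_{\HH}$ is a linear word whose content is not contained in any $k$-element hyperedge, and the endpoint fails $(P_{\HH})$. This contradiction shows that no finite basis exists.

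The main point to check is the middle step: the inequality must be valid in the larger semiring $\Slin{m}$, not just in $\Slin{k}$. The chromatic bound $\chi(\HH)>m$ handles this, because a nonzero value in $\Slin{m}$ yields a proper $m$-coloring even though the hyperedges have size $k<m$. The preservation lemma needs validity only in $\Slin{k}$, which the lower endpoint supplies. The two hypotheses on $\HH$ (girth greater than $\max\{N,3\}$ for preservation, chromatic number greater than $m$ for validity) are independent, and Theorem~\ref{thm:EH} provides both at once.
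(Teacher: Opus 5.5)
Your proof is correct and follows the paper's own argument. It uses the same $k$-uniform hypergraph with $g(\HH)>\max\{N,3\}$ and $\chi(\HH)>m$, the same letter-choice coloring to show that $\bt_{\HH}$ vanishes identically in $\Slin{m}$, and Lemma~\ref{lem:preservation}, which needs $\Delta$ to be valid only in $\Slin{k}$.

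The only difference is the witness word. The paper uses $x_v^2\preceq\bt_{\HH}$, whose square summand fails $(P_{\HH})$ at once. Your choice $\bq_{\HH}$ needs the extra observation $|V(\HH)|>m\geq k$, which you correctly supply.
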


\begin{proof}
Let a variety in this interval have a finite basis, and normalize the
basis to a finite family $\Delta$ of inequalities $\bq\preceq\bu$.
Choose $N\geq1$ bounding $|\bu|$ over this family. Each member
of $\Delta$ holds in the lower generator $\Slin{k}$.

Choose a finite $k$-uniform hypergraph $\HH$, without isolated
vertices, such that
\[
 g(\HH)>\max\{N,3\},\qquad\chi(\HH)>m.
\]
Such a hypergraph exists by Theorem~\ref{thm:EH}. We show that
$\bt_{\HH}$ has constant value zero in $\Slin{m}$. Suppose
instead that an assignment gives it a nonzero value. By
Lemma~\ref{lem:values}, every edge product is nonzero and the
values of its vertex variables have pairwise disjoint contents.
Every vertex belongs to an edge and hence has a nonzero value.
Choose one letter from each vertex value and color the vertex by that
letter's index in $[m]$. On any hyperedge these choices are distinct,
because the corresponding contents are disjoint. This is a strong
$m$-coloring and therefore an ordinary proper $m$-coloring of
$\HH$, contradicting $\chi(\HH)>m$.

Consequently, $x_v^2\preceq\bt_{\HH}$ holds in $\Slin{m}$
for any vertex $v$, and hence in every variety in the stated interval.
It must therefore follow from $\Delta$. But the initial term
$\bt_{\HH}$ satisfies $(P_{\HH})$, and
Lemma~\ref{lem:preservation}, using the validity of $\Delta$ in
$\Slin{k}$, preserves this property throughout any deduction.
The endpoint $\bt_{\HH}+x_v^2$ contains a square and fails it.
This contradiction proves the theorem.
\end{proof}

\begin{theorem}\label{thm:mixedinterval}
Let $k\geq3$ and $1\leq n\leq2k-2$. Every variety in
\[
 [\V(\Slin{k}),\,
   \V(\Spow{n})\vee\V(\Slin{k})]
\]
is nonfinitely based.
\end{theorem}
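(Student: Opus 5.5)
The plan follows the proof of Theorem~\ref{thm:finitejoin}, but the argument is run for an arbitrary variety $\mathcal V$ in the interval instead of for the join alone. Suppose $\mathcal V$ has a finite basis. Lemma~\ref{lem:normalization} turns it into a finite family $\Delta$ of inequalities $\bq\preceq\bu$ with $\bq$ a word. Let $N\geq1$ bound $|\bu|$ over $\Delta$. Because $\V(\Slin{k})\subseteq\mathcal V$, every member of $\Delta$ holds in $\Slin{k}$. This is the only hypothesis that Lemma~\ref{lem:preservation} requires, so the lower endpoint of the interval is what supplies preservation.

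Next I produce an identity of the upper endpoint that the preservation property rules out. By Theorem~\ref{thm:EH}, choose a finite $k$-uniform hypergraph $\HH$ with no isolated vertices, $g(\HH)>\max\{N,3\}$ and $\chi(\HH)>k$. Fix a vertex $v$ and a hyperedge $e$ containing it. As in \eqref{eq:squarehypergraph}, the inequality $x_v^2\preceq\bt_{\HH}$ holds in $\Slin{k}$, because $\bt_{\HH}$ is identically zero there by Lemma~\ref{lem:hypergraphvalue}. It also holds in $\Spow{n}$, since $n\leq2k-2$: Lemma~\ref{lem:thresholdinequality} gives $x_v^2\preceq\prod_{u\in e}x_u$, and this word is a summand of $\bt_{\HH}$. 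So the identity holds in both generators of the upper endpoint $\V(\Spow{n})\vee\V(\Slin{k})$. Identities of a larger variety hold in every subvariety, so it holds in $\mathcal V$ and is derivable from $\Delta$.

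Lemma~\ref{lem:deduction} then gives a chain of elementary steps from $\bt_{\HH}$ to $\bt_{\HH}+x_v^2$. The starting term satisfies $(P_{\HH})$. Since each member of $\Delta$ holds in $\Slin{k}$, has at most $N$ upper words, and $g(\HH)>N$, Lemma~\ref{lem:preservation} preserves $(P_{\HH})$ at every step. The endpoint contains the nonlinear word $x_v^2$, so it fails $(P_{\HH})$, which is a contradiction.

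I do not expect a serious obstacle. The one point to get right is the direction of inheritance. Validity of the basis in the lower endpoint $\Slin{k}$ is what drives the preservation lemma. Validity of the target identity in the upper endpoint is what places it in $\mathcal V$. Both come from $\V(\Slin{k})\subseteq\mathcal V\subseteq\V(\Spow{n})\vee\V(\Slin{k})$.
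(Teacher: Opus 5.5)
Your proposal is correct and follows the paper's own proof essentially step for step. The basis holding in the lower endpoint $\Slin{k}$ drives Lemma~\ref{lem:preservation}. The inequality $x_v^2\preceq\bt_{\HH}$, valid in both generators of the upper endpoint by Lemmas~\ref{lem:hypergraphvalue} and~\ref{lem:thresholdinequality}, therefore cannot be derived from $\Delta$. You also make explicit the two inheritance directions that the paper's terse proof leaves implicit.
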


\begin{proof}
Suppose a variety in the interval has a finite normalized basis
$\Delta$, and let $N$ bound the number of words in its upper terms.
All these identities hold in $\Slin{k}$. Choose a $k$-uniform
hypergraph of girth greater than $\max\{N,3\}$ and chromatic
number greater than $k$, deleting isolated vertices. As in the proof
of Theorem~\ref{thm:finitejoin},
$x_v^2\preceq\bt_{\HH}$ holds in both $\Spow{n}$ and
$\Slin{k}$, so it holds in the upper endpoint and thus in the
chosen variety. The local preservation lemma applied to $\Delta$
prevents its derivation from $\bt_{\HH}$, since the endpoint
contains $x_v^2$. This contradicts that $\Delta$ is a basis.
\end{proof}

\begin{corollary}\label{cor:powerintermediate}
For every $k\geq3$,
\begin{equation}\label{eq:powerintermediate}
 \begin{split}
 \V(\Spow{2k-2})
 &\subsetneq
 \V(\Spow{2k-2})\vee\V(\Slin{k})\\
 &\subsetneq\V(\Spow{2k-1}).
 \end{split}
\end{equation}
The two power-generated endpoints are finitely based, and the displayed
intermediate variety is nonfinitely based. In particular,
$\V(\Spow{2k-1})$ does not cover $\V(\Spow{2k-2})$.
\end{corollary}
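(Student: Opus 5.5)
The plan is to assemble the chain from earlier results and then separate each inclusion by a single identity.

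\emph{Inclusions.} The first inclusion $\V(\Spow{2k-2})\subseteq\V(\Spow{2k-2})\vee\V(\Slin{k})$ is trivial. For the second, Theorem~\ref{thm:containment} with $n=2k-1$ gives $\V(\Slin{k})\subseteq\V(\Spow{2k-1})$. Proposition~\ref{prop:powerchain} gives $\V(\Spow{2k-2})\subseteq\V(\Spow{2k-1})$. Hence the join lies in $\V(\Spow{2k-1})$.

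\emph{Finite basis properties.} The finite basis statements for the two endpoints are Theorem~\ref{thm:finitepowers}. The intermediate join is nonfinitely based by Theorem~\ref{thm:finitejoin}, since $k\geq3$ and $n=2k-2<2k-1$.

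\emph{Strictness.} Both strict inclusions follow from the finite basis properties just established. If the join were equal to $\V(\Spow{2k-2})$ or to $\V(\Spow{2k-1})$, it would be finitely based, contradicting Theorem~\ref{thm:finitejoin}.

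It is also worth recording explicit separating identities, though they are not needed. For the first inclusion, Lemma~\ref{lem:thresholdinequality} gives $x_1^2\preceq x_1\cdots x_k$. This holds in $\Spow{2k-2}$ but fails in $\Slin{k}$, hence fails in the join. For the second inclusion, the hypergraph inequality $x_v^2\preceq\bt_{\HH}$ from the proof of Theorem~\ref{thm:finitejoin} holds in the join. It fails in $\Spow{2k-1}$, because that semiring contains $\Slin{k}$ in its variety only via a construction in which $x_v^2$ can be nonzero. Checking that failure directly would take more work, which is why the finite-basis argument is preferable.

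\emph{Non-covering.} The final assertion follows because the join is a variety lying strictly between $\V(\Spow{2k-2})$ and $\V(\Spow{2k-1})$.

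I expect no real obstacle. The only point needing care is to use the finite-basis contrast for strictness rather than attempting an ad hoc identity for the upper inclusion.
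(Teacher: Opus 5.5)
Your proof is correct. For the inclusions and the finite-basis claims it coincides with the paper's; where you differ is strictness.

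\textbf{Your route.} You get both strict inclusions from the contrast between Theorem~\ref{thm:finitepowers} and Theorem~\ref{thm:finitejoin}. Equal varieties share the finite basis property, so a nonfinitely based join cannot equal either finitely based endpoint.

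\textbf{The paper's route.} The paper separates each inclusion by an explicit identity.
\begin{itemize}
\item For the lower inclusion it uses $\Slin{k}\notin\V(\Spow{2k-2})$ from Theorem~\ref{thm:containment}, which is your $x_1^2\preceq x_1\cdots x_k$.
\item For the upper inclusion it uses $x^{2k-1}\approx x^{2k}$. This holds in $\Spow{2k-2}$ by truncation and in $\Slin{k}$ because both sides are nonlinear words. It fails in $\Spow{2k-1}$ at $x=a$.
\end{itemize}

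\textbf{Comparison.} Your argument is shorter given the results already established. It is not circular, since Theorem~\ref{thm:finitejoin} does not use this corollary. However, it makes the purely lattice-theoretic facts (strictness and non-covering) depend on the hypergraph and Erd\H{o}s--Hajnal machinery. The paper's identities establish those facts elementarily, independently of any finite-basis information.

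\textbf{On your aside.} It overstates the difficulty of the upper separation, and the reason you give for the failure is not an argument. The inequality $x_v^2\preceq\bt_{\HH}$ fails in $\Spow{2k-1}$ directly: send $x_v\mapsto a^k$ and every other variable to $a$. Then $\bt_{\HH}$ takes the value $a^{2k-1}$, because $v$ lies in a hyperedge, while $x_v^2$ takes the value $0$. The identity $x^{2k-1}\approx x^{2k}$ is simpler still. Since your proof does not rely on the aside, this does not affect its correctness.
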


\begin{proof}
The first inclusion is strict by Theorem~\ref{thm:containment},
since $\Slin{k}\notin\V(\Spow{2k-2})$.
The second inclusion follows from the same theorem and the power chain.
For strictness, consider
\[
 x^{2k-1}\approx x^{2k}.
\]
It holds in $\Spow{2k-2}$ by truncation and in $\Slin{k}$
because both sides are nonlinear words. Thus it holds in the join.
It fails in $\Spow{2k-1}$ at $x=a$, where the left-hand side
is $a^{2k-1}$ and the right-hand side is zero.
The endpoint finite basis statements are
Theorem~\ref{thm:finitepowers}; the intermediate nonfinite basis
statement is Theorem~\ref{thm:finitejoin}.
\end{proof}

The linear-word varieties themselves form a strictly ascending chain:
\begin{equation}\label{eq:linearchain}
 \V(S_c^*(a))\subsetneq\V(S_c^*(ab))
 \subsetneq\V(S_c^*(abc))\subsetneq\cdots.
\end{equation}
The natural subsemiring inclusions give containment. The inequality
$y\preceq x_1\cdots x_{k+1}$ holds in $\Slin{k}$ but fails
in $\Slin{k+1}$ by assigning $a_i$ to $x_i$ and zero to $y$.
Thus the inclusions are strict. Theorem~\ref{thm:linearinterval}
shows that every finite interval of this chain above
$\V(S_c^*(abc))$ consists entirely of nonfinitely based varieties,
whereas the join of the whole chain is finitely based by
Theorem~\ref{thm:unrestrictedbasis}.

\subsection{The six-element quotient}

Shao, Ren and Gao proved that the six-element semiring $TR_6$
is nonfinitely based and that every proper subvariety of $\V(TR_6)$
is finitely based \cite[Theorem~4.6]{ShaoRenGao}. Its operations,
with the numerical labels used there, are shown in
Table~\ref{tab:TR6}. The label $1$ denotes its common absorbing
element.

\begin{table}[htbp]
\caption{Operations in the six-element semiring}
\label{tab:TR6}
\centering
\renewcommand{\arraystretch}{1.15}
\[
\begin{array}{c|cccccc}
 +&1&2&3&4&5&6\\\hline
 1&1&1&1&1&1&1\\
 2&1&2&3&3&2&3\\
 3&1&3&3&3&3&3\\
 4&1&3&3&4&3&4\\
 5&1&2&3&3&5&3\\
 6&1&3&3&4&3&6
\end{array}
\qquad
\begin{array}{c|cccccc}
 \cdot&1&2&3&4&5&6\\\hline
 1&1&1&1&1&1&1\\
 2&1&1&1&1&1&3\\
 3&1&1&1&1&1&1\\
 4&1&1&1&1&3&1\\
 5&1&1&1&3&1&3\\
 6&1&3&1&1&3&1
\end{array}
\]
\end{table}

\begin{proposition}\label{prop:TR6quotient}
The semiring $TR_6$ is a homomorphic image of a subsemiring of
$S_c^*(abc)$. Moreover,
\begin{equation}\label{eq:TR6strict}
 \V(TR_6)\subsetneq\V(S_c^*(abc)).
\end{equation}
\end{proposition}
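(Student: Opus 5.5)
The plan is to exhibit an explicit seven-element subsemiring of $S_c^*(abc)$ and a two-element collapse. First I read off the structure of $TR_6$ from Table~\ref{tab:TR6}. The element $1$ is the absorbing top. The additive order has $5<2<3$, $6<4<3$ and $3<1$, with $2+4=2+6=4+5=5+6=3$. The only products different from $1$ are $2\cdot6=6\cdot2=4\cdot5=5\cdot4=5\cdot6=6\cdot5=3$. This suggests realizing $5,6$ as single letters, $2,4$ as two-letter words, and $3$ as the top nonzero word, with the sum $5+6$ forced to equal $3$ by a quotient.

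Concretely, I would take
\[
 T=\{a,\,c,\,ab,\,bc,\,ac,\,abc,\,0\}\subseteq S_c^*(abc).
\]
Using \eqref{eq:sup} and Lemma~\ref{lem:values}, I check that $T$ is closed. The sums are unions of contents, namely $a+c=ac$, $a+bc=c+ab=ab+bc=ac+ab=ac+bc=abc$, together with the comparable cases. The nonzero products are only $a\cdot c=ac$, $a\cdot bc=abc$ and $c\cdot ab=abc$. Next I identify $ac$ with $abc$ and leave all other classes trivial. To see that this is a congruence, I verify that $ac+x$ and $abc+x$ agree modulo the identification for every $x\in T$: both are $ac$ or $abc$ when $x\neq0$, and both are $0$ when $x=0$. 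Likewise $ac\cdot x=abc\cdot x=0$ for every $x$, since the only letter of $ac$ disjoint from anything in $T$ would be $b$, and $b\notin T$. The quotient then has six elements. The correspondence
\[
 a\mapsto5,\quad c\mapsto6,\quad ab\mapsto2,\quad bc\mapsto4,\quad \{ac,abc\}\mapsto3,\quad 0\mapsto1
\]
is checked against both tables entry by entry. For example, $ab\cdot bc=0$ matches $2\cdot4=1$, $ab+c=abc$ matches $2+6=3$, and $c^2=0$ matches $6\cdot6=1$. The one step I expect to need care is this table verification, especially making sure that every zero product in $T$ corresponds to an entry $1$ in $TR_6$, and confirming that no extra element is generated.

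This gives $TR_6\in\V(S_c^*(abc))$. For strictness, I would use the identity $w\preceq xyz$. In $TR_6$ every product of two elements lies in $\{1,3\}$, and $3$ and $1$ annihilate everything, so every triple product equals the absorbing top $1$ and the inequality holds. In $S_c^*(abc)$ it fails at $x=a$, $y=b$, $z=c$, $w=0$, where the right side is $abc\neq0$ and the left side is $0$. Therefore $\V(TR_6)\subsetneq\V(S_c^*(abc))$.
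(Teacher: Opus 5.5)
Your proposal is correct and is the paper's construction with the letters $a$ and $b$ interchanged: the paper takes $U=\{0,b,c,ab,ac,bc,abc\}$ and collapses $bc$ with $abc$. For strictness you use $w\preceq xyz$ in place of the paper's $xyz\approx t^{2}$, and both work for the same reason, namely that every triple product in $TR_6$ is the absorbing element $1$ while $abc\neq 0$ in $S_c^*(abc)$.
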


\begin{proof}
Consider
\[
 U=\{0,b,c,ab,ac,bc,abc\}\subseteq S_c^*(abc).
\]
This set is closed under addition: the union of the contents of any
two nonzero listed words cannot be the omitted singleton $a$.
It is also closed under multiplication. The only nonzero products of
two listed elements, up to commutativity, are
\[
 b\cdot c=bc,\qquad ab\cdot c=abc,\qquad ac\cdot b=abc.
\]
Thus $U$ is a subsemiring.

Let $\rho$ have the single nonsingleton class $\{bc,abc\}$.
For every $u\in U$, both $bc\cdot u$ and $abc\cdot u$ are
zero. For addition, if $u=0$, then both sums are zero. If $u\neq0$,
then $abc+u=abc$ and $bc+u$ is either $bc$ or $abc$.
Consequently, the two sums belong to the same $\rho$-class.
These verifications show that $\rho$ is a congruence.

The six classes are
\[
 \{0\},\ \{ab\},\ \{bc,abc\},\ \{ac\},\ \{b\},\ \{c\}.
\]
Send them, in this order, to $1,2,3,4,5,6$. The nonzero products
listed above give exactly the three unordered pairs
$\{2,6\},\{4,5\},\{5,6\}$ with product $3$ in
Table~\ref{tab:TR6}; every other product is $1$.
For addition, $\{ab\}$ lies above $\{b\}$,
$\{ac\}$ lies above $\{c\}$, and the class $\{bc,abc\}$
is above every nonzero class. Sums between the two chains give this
class, while $\{0\}$ absorbs every class. This is exactly the
addition table. Hence the map is an isomorphism $U/\rho\cong TR_6$.

It follows that $\V(TR_6)\subseteq\V(S_c^*(abc))$.
Every triple product in $TR_6$ is $1$, and every square is $1$,
so it satisfies
\[
 xyz\approx t^2.
\]
This identity fails in $S_c^*(abc)$ at $x=a,y=b,z=c,t=a$:
its values are $abc$ and $0$. Therefore the inclusion is strict.
\end{proof}

\begin{corollary}\label{cor:notminimal}
The variety $\V(S_c^*(abc))$ is not a limit variety. In particular,
it is not a minimal nonfinitely based subvariety of $\V(S_{53})$.
\end{corollary}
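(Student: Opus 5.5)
The plan is to combine Proposition~\ref{prop:TR6quotient} with the two cited facts about $TR_6$ and Corollary~\ref{cor:abc}. Recall the definition used in \cite{ShaoRenGao}: a \emph{limit variety} is a nonfinitely based variety all of whose proper subvarieties are finitely based. A \emph{minimal nonfinitely based subvariety} of $\V(S_{53})$ is, in the sense of \cite[Problem~6.1]{Gao}, a nonfinitely based subvariety of $\V(S_{53})$ with no proper nonfinitely based subvariety. For varieties that are themselves nonfinitely based, these two notions coincide.

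First, I note that $\V(S_c^*(abc))$ is nonfinitely based by Corollary~\ref{cor:abc}. It lies inside $\V(S_{53})$ by \cite[Lemma~4.4]{Gao}, or equivalently by Proposition~\ref{prop:insideS53} together with the natural inclusion $\Slin{3}\subseteq\Slininf$. So it is a candidate for both notions.

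Second, Proposition~\ref{prop:TR6quotient} gives the strict inclusion $\V(TR_6)\subsetneq\V(S_c^*(abc))$. By \cite[Theorem~4.6]{ShaoRenGao}, $\V(TR_6)$ is nonfinitely based. Hence $\V(S_c^*(abc))$ has a proper subvariety that is nonfinitely based. This directly violates the defining property of a limit variety, so $\V(S_c^*(abc))$ is not one.

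For the second sentence, note that $\V(TR_6)\subseteq\V(S_c^*(abc))\subseteq\V(S_{53})$. Thus $\V(TR_6)$ is a nonfinitely based subvariety of $\V(S_{53})$ strictly below $\V(S_c^*(abc))$, so the latter is not minimal among nonfinitely based subvarieties. There is no real obstacle here. The only point needing care is matching the paper's notion of minimality in \cite[Problem~6.1]{Gao} with the limit-variety definition. Since all the semantic work was done in Proposition~\ref{prop:TR6quotient}, the argument is a short chain of citations.
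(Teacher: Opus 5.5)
Your proposal is correct and follows essentially the same route as the paper. Proposition~\ref{prop:TR6quotient} gives the proper subvariety $\V(TR_6)$ of $\V(S_c^*(abc))$, the result of Shao, Ren and Gao shows it is nonfinitely based, and $S_c^*(abc)\in\V(S_{53})$ yields the second assertion; your extra remark that $\V(S_c^*(abc))$ is itself nonfinitely based (Corollary~\ref{cor:abc}) is not needed for the negative conclusion but does no harm.
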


\begin{proof}
Proposition~\ref{prop:TR6quotient} gives a proper subvariety
$\V(TR_6)$, which is nonfinitely based by
\cite[Theorem~1.2]{ShaoRenGao}. Thus not every proper subvariety
of $\V(S_c^*(abc))$ is finitely based. Since
$S_c^*(abc)\in\V(S_{53})$ by \cite[Lemma~4.4]{Gao}, the second
assertion follows as well.
\end{proof}

\clearpage
\appendix
\section{Cayley tables of the eight-element semiring}

For $k=3$, the set $W^{\leq}\cup\{0\}$ consists of the eight
elements
\[
 0,\ a,\ b,\ c,\ ab,\ ac,\ bc,\ abc.
\]
The two tables below use this order. They are the specialization of
\eqref{eq:multiplication} and \eqref{eq:addition}. In the addition
table, the sum of two nonzero words contains all their letters, each
once. In the multiplication table, a repeated letter makes the product
zero. For example,
\[
 a+ab=ab,\qquad a+bc=abc,\qquad a\cdot ab=0,
 \qquad a\cdot bc=abc.
\]
In particular, the addition is not flat, since $a+b=ab\neq 0$.

\begin{table}[!ht]
\caption{Addition in the eight-element semiring}
\centering
\renewcommand{\arraystretch}{1.18}
\[
\begin{array}{c|cccccccc}
 +&0&a&b&c&ab&ac&bc&abc\\\hline
 0&0&0&0&0&0&0&0&0\\
 a&0&a&ab&ac&ab&ac&abc&abc\\
 b&0&ab&b&bc&ab&abc&bc&abc\\
 c&0&ac&bc&c&abc&ac&bc&abc\\
 ab&0&ab&ab&abc&ab&abc&abc&abc\\
 ac&0&ac&abc&ac&abc&ac&abc&abc\\
 bc&0&abc&bc&bc&abc&abc&bc&abc\\
 abc&0&abc&abc&abc&abc&abc&abc&abc
\end{array}
\]
\end{table}

\begin{table}[!ht]
\caption{Multiplication in the eight-element semiring}
\centering
\renewcommand{\arraystretch}{1.18}
\[
\begin{array}{c|cccccccc}
 \cdot&0&a&b&c&ab&ac&bc&abc\\\hline
 0&0&0&0&0&0&0&0&0\\
 a&0&0&ab&ac&0&0&abc&0\\
 b&0&ab&0&bc&0&abc&0&0\\
 c&0&ac&bc&0&abc&0&0&0\\
 ab&0&0&0&abc&0&0&0&0\\
 ac&0&0&abc&0&0&0&0&0\\
 bc&0&abc&0&0&0&0&0&0\\
 abc&0&0&0&0&0&0&0&0
\end{array}
\]
\end{table}

\end{document}